\definecolor{dgreen}{rgb}{0,0.5,0}
\pgfplotsset{compat=newest}
\newcommand{\bbG}{\mathbb{G}}
\newcommand{\bbH}{\mathbb{H}}
\newcommand{\bbS}{\mathbb{S}}
\newcommand{\calA}{\mathcal{A}}
\newcommand{\calB}{\mathcal{B}}
\newcommand{\calC}{\mathcal{C}}
\newcommand{\calD}{\mathcal{D}}
\newcommand{\calH}{\mathcal{H}}
\newcommand{\calL}{\mathcal{L}}
\newcommand{\calS}{\mathcal{S}}
\newcommand{\calT}{\mathcal{T}}
\newcommand{\calu}{\mathcal{u}}
\newcommand{\calv}{\mathcal{v}}
\newcommand{\calf}{\mathcal{f}}
\newcommand{\calg}{\mathcal{g}}
\newcommand{\fra}{\mathfrak{a}}
\newcommand{\frh}{\mathfrak{h}}
\renewcommand{\frq}{\mathfrak{q}}
\newcommand{\Dm}{D_{\mathrm{max}}}
\newcommand{\conormal}{\partial^L_\nu}
\newcommand{\weakstar}{\rightharpoonup^*}
\newcommand{\R}{\mathds{R}}
\newcommand{\C}{\mathds{C}}
\newcommand{\N}{\mathds{N}}
\newcommand{\suchthat}{~|~} 
\DeclareMathOperator{\one}{\mathbbm{1}} 
\DeclareMathOperator{\dist}{dist} 
\newcommand{\argument}{\mathord{\,\cdot\,}} 
\newcommand{\dd}{\;\mathrm{d}} 
\DeclareMathOperator{\linSpan}{span} 
\DeclareMathOperator{\Div}{div}
\DeclareMathOperator{\trace}{tr}
\let\Re\undefined
\DeclareMathOperator{\Re}{Re}
\let\Im\undefined
\DeclareMathOperator{\Im}{Im}
\newcommand{\spb}{\mathrm{s}} 
\newcommand{\eps}{\varepsilon}
\newcommand{\ltwoog}{\ensuremath{L^2(\Omega)\times L^2(\Gamma)}}
\newcommand{\linftyog}{\ensuremath{L^\infty(\Omega)\times L^\infty(\Gamma)}}
\newcommand{\oug}{{\Omega\sqcup\Gamma}}
\theoremstyle{definition}
\newtheorem{defn}{Definition}[section]
\newtheorem{rem}[defn]{Remark}
\newtheorem{example}[defn]{Example}
\newtheorem{hyp}[defn]{Hypothesis}
\theoremstyle{plain}
\newtheorem{prop}[defn]{Proposition}
\newtheorem{lem}[defn]{Lemma}
\newtheorem{thm}[defn]{Theorem}
\newtheorem{cor}[defn]{Corollary}
\numberwithin{equation}{section}
\title{Title}
\author{Markus Kunze}
\address[M. Kunze]{University of Konstanz, Fachbereich Mathematik und Statistik, Fach 193, 78357, Konstanz, Germany}
\email{markus.kunze@uni-konstanz.de}
\author{Jonathan Mui}
\address[J. Mui]{University of Wuppertal, School of Mathematics und Natural Sciences, Gaußstr.\ 20, 42119 Wuppertal, Germany}
\email{jomui@uni-wuppertal.de}
\author{David Plo\ss}
\address[D. Plo\ss]{Karlsruhe Institute of Technology, Department of Mathematics, Englerstraße 2, 76131 Karlsruhe, Germany}
\email{david.ploss@kit.edu}
\title[Non-local Wentzell--Robin boundary conditions]{Elliptic operators with \\non-local Wentzell--Robin boundary conditions}
\date{\today}
\begin{document}

\begin{abstract}
    In this article, we study strictly elliptic, second-order differential operators on a bounded Lipschitz domain in $\R^d$
    subject to certain non-local Wentzell--Robin boundary conditions. We prove that such operators generate strongly continuous semigroups
    on $L^2$-spaces and on spaces of continuous functions. We also provide a characterization of positivity and (sub-)Mar\-kovianity
    of these semigroups. Moreover, based on spectral analysis of these operators, we discuss further properties of the semigroup such 
    as asymptotic behavior and, in the case of a non-positive semigroup, the weaker notion of eventual positivity of the semigroup.
\end{abstract}
\keywords{Non-local boundary condition, Lipschitz boundary, Wentzell--Robin boundary conditions, Analytic semigroup, (Eventual) positivity, (Sub)-Markovian semigroup}
\subjclass[2020]{Primary: 35J25, 35P05; Secondary: 35B40, 47D06, 35B09.}

\maketitle

\section{Introduction}

Let $\Omega\subset \R^d$ be a bounded domain with Lipschitz boundary $\Gamma = \partial\Omega$.
In this article, we study strictly elliptic second-order differential operators, such as the Laplacian $\Delta$ (which we will 
consider for the rest of this introduction), subject to the non-local Wentzell--Robin boundary condition
\begin{equation}
    \label{eq.nonlocalbc}
    \partial_\nu u(z) = \Delta u(z) + \int_\Omega b_{21}(z,x) u(x)\dd \lambda(x) + \int_\Gamma b_{22}(z,w) u(w)\dd\sigma(w)
\end{equation}
for $z\in \Gamma$. 
Here $\lambda$ denotes the Lebesgue measure on $\Omega$, $\sigma$ denotes the $(d-1)$-dimensional Hausdorff measure on $\Gamma$, and 
$b_{21}\in L^\infty(\Gamma\times\Omega)$ and $b_{22} \in L^\infty(\Gamma\times\Gamma)$ are suitable integral kernels that account for
the non-locality of the boundary condition. If $b_{21}=0$ and $b_{22}=0$, we obtain local Wentzell--Robin boundary conditions. 

In contrast to the classical Dirichlet, Neumann, and Robin boundary conditions, the boundary trace of the differential operator occurs in the local Wentzell--Robin boundary condition. The presence of this term accounts for the fact that there might be some energy concentrated
on the boundary $\Gamma$. We refer to \cite{g06} for a derivation and physical interpretation of these boundary conditions. 
Second-order differential operators with local boundary conditions of Wentzell--Robin type have been widely studied in the literature, see
for example
\cite{fggr02, ampr03, e05, ef05, Nittka, warma, fggo16, be19}.\smallskip

The motivation to consider non-local terms in boundary conditions stems from probability theory and goes back to the pioneering work 
of Feller \cite{feller52, feller54}, who studied one-dimensional diffusion processes. A probabilistic interpretation of boundary conditions
may be found in \cite{im63}, see also the recent article \cite{bobrowski24}. Roughly speaking, the kernels $b_{21}$ and $b_{22}$ are
responsible for restarting the associated stochastic process when it reaches the point $z\in \Gamma$.

However, in addition to this probabilistic motivation, there are also real-world models in which differential operators subject
to non-local boundary conditions naturally appear. Some examples include thermoelasticity~\cite{day82}, a model of Bose condensation~\cite{s89}, or a model of a thermostat~\cite{gm97}.\smallskip

An important question is whether the differential operator subject to the boundary condition \eqref{eq.nonlocalbc} generates
a strongly continuous semigroup, i.e.\ whether the associated Cauchy problem is well-posed. Additional properties of the semigroup
such as positivity, (sub-)Markovianity, and asymptotic behavior are also of interest. For the semigroup to be the transition semigroup 
of a stochastic process, it is of course essential that it is Markovian. However, in some real-world applications the semigroup fails to be Markovian or even positive. In this case, it is rather natural to ask whether the semigroup is \emph{eventually positive}
in some sense. A theory of eventually positive semigroups has been established in \cite{DGK2, DGK1, DG18}. As it turns out, the question of whether a given semigroup is eventually positive can be deduced from \emph{spectral information} about its generator. In \cite{GM24}, the authors
investigated the question of whether a symmetric, strictly elliptic second-order differential operator subject to non-local Robin boundary 
conditions (i.e.\ \eqref{eq.nonlocalbc} without the Laplacian term and with $b_{21}=0$) generates an eventually positive semigroup.\medskip

In this article, we carry out a systematic investigation of strictly elliptic operators of second order, subject to general non-local
Wentzell--Robin boundary conditions~--- see Hypothesis \ref{h.main} for our standing assumptions on the coefficients of the differential operator and the boundary conditions. There are two primary objectives.

Firstly, we want to establish that our operator generates a strongly continuous semigroup and characterize when this semigroup
is positive and/or (sub-)~Markovian. Our main results in this direction are Theorems \ref{t.generate}, \ref{t.characterization}, and \ref{t.sgonc}.
These results complement results in the literature concerning non-local Dirichlet boundary conditions (see \cite{gs01, bap07, bap09, AKK16})
and non-local Robin boundary conditions (see \cite{sku89, tai16, AKK18}). In the case of positive semigroups, we also obtain a complete characterization of the asymptotic behavior of the semigroup in Theorem \ref{t.positiveasymptotic}.

Secondly, we want to identify situations in which the associated semigroup is eventually positive but not positive. As we remarked
above, this requires us to study certain spectral properties of the generator. Unsurprisingly, it is very hard to discuss these questions
in full generality, as spectral properties may depend intricately on the coefficients. Therefore, we focus more on examples for this second point.
In Theorem \ref{t.unif-eventual-pos}, we identify a concrete subclass of operators for which the semigroup is eventually positive. 
However, it may happen that even eventual positivity fails. In Section \ref{sect.perturbation}, we discuss a specific one-dimensional 
example depending on a real parameter $\tau$, where eventual positivity fails for certain choices of $\tau$. As a matter of fact,
Theorem \ref{t.ev.pos} shows that  different spectral phenomena may be responsible for the failure of eventual positivity.

\subsection*{Acknowledgments}
The authors thank Wolfgang Arendt and Jochen Gl\"{u}ck for many insightful discussions and comments. 
The second author acknowledges the financial support of DFG Project~515394002. 
The third author acknowledges the financial support of DFG Project~258734477~(SFB 1173).

\section{Second order elliptic operators}

Throughout this article $\Omega\subset \R^d$ denotes a bounded domain with Lipschitz boundary $\Gamma$. We denote Lebesgue measure on $\Omega$
by $\lambda$ and surface measure on $\Gamma$ (i.e.\ $(d-1)$-dimensional Hausdorff measure) by $\sigma$. For $p\in [1,\infty]$
the corresponding complex 
$L^p$-spaces are denoted by $L^p(\Omega)$ and $L^p(\Gamma)$ respectively and the corresponding norms are $\|\cdot\|_{\Omega, p}$
and $\|\cdot\|_{\Gamma, p}$. Similarly, the scalar products on $L^2(\Omega)$ and $L^2(\Gamma)$ are denoted by $\langle\cdot,\cdot\rangle_\Omega$ 
and $\langle\cdot, \cdot\rangle_\Gamma$ respectively. 
The classical Sobolev space of square integrable functions on $\Omega$ with weak derivative in $L^2(\Omega)$
is denoted by $H^1(\Omega)$.

In this section, we define a uniformly elliptic second-order differential operator on $L^2(\Omega)$ that will play a central role throughout.
The following are our standing assumptions on the coefficients.
\begin{hyp}
    \label{h.prelim}
    $\Omega\subset \R^d$ is a bounded domain with Lipschitz boundary $\Gamma$. We are given a function 
    $A=(a_{ij}) \in L^\infty(\Omega; \R^{d\times d})$ and $b=(b_j), c=(c_j) \in L^\infty(\Omega; \R^d)$.
    The matrix $A=(a_{ij})$ is assumed to be symmetric (i.e. $a_{ij}= a_{ji}$ for all $i,j=1, \ldots, d$) 
    and uniformly elliptic in the sense that there exists a constant $\eta>0$ such that 
    \[
    \sum_{i,j=1}^d a_{ij}(x)\xi_i\bar{\xi}_j \geq \eta |\xi|^2
    \]
    for all $\xi\in \C^d$ and almost all $x\in \Omega$.
\end{hyp}

We now define the distributional operator $\mathfrak{L}: H^1(\Omega) \to \calD(\Omega)'$ by setting
\begin{equation}
\label{eq.ldist}
\langle \mathfrak{L}u, \varphi\rangle \coloneqq \sum_{i,j=1}^d \int_\Omega a_{ij}D_iu\overline{D_j\varphi}\, \dd\lambda 
+\sum_{j=1}^d \int_\Omega b_j (D_ju)\overline{\varphi} + c_j u\overline{D_j\varphi}\dd\lambda 
\end{equation}
for all $\varphi\in C_c^\infty(\Omega)$. Here, $\calD(\Omega)'$ refers to the space of all anti-linear distributions.
The maximal $L^2$-realization of $\mathfrak{L}$ is denoted by $L$, i.e.
\begin{equation}
    \label{eq.l2}
    \begin{aligned}
    \Dm(L) &\coloneqq \big\{ u\in H^1(\Omega) \suchthat \exists\, f\in L^2(\Omega) \mbox{ with }\\
    & \qquad\qquad \langle \mathfrak{L}u, \varphi\rangle = \int_\Omega 
    f\overline{\varphi}\dd\lambda\,\, \forall\, \varphi\in C_c^\infty(\Omega) \big\} \\ 
    Lu &= f.
    \end{aligned}
\end{equation}

\emph{Sectorial forms} play an important role in this article. Form methods provide useful tools to establish well-posedness of certain
Cauchy problems on a Hilbert space $H$. Indeed, if the operator $A$ is associated to a closed, densely defined and sectorial form, then $-A$ generates
an analytic, strongly continuous semigroup on $H$, see Section 1.4 of \cite{Ouhabaz}. For our purposes, 
we introduce the sectorial (actually, symmetric) form 
$\frq : H^1(\Omega)\times H^1(\Omega) \to \C$ by setting
\begin{equation}
    \label{eq.qform}
    \frq[u,v]\coloneqq \sum_{i,j=1}^d \int_\Omega a_{ij}D_iu\overline{D_jv}\, \dd\lambda 
+\sum_{j=1}^d \int_\Omega b_j (D_ju)\overline{v} + c_j u\overline{D_jv}\dd\lambda.
\end{equation}
Noting that $\frq[u,\varphi]=\langle Lu, \varphi\rangle$ for all $u\in \Dm(L)$ and $\varphi\in C_c^\infty(\Omega)$ we see
that the associated operator is a suitable realization of $L$ on $L^2(\Omega)$. We use a larger class of test functions to define the
\emph{weak conormal derivative}.

\begin{defn}
    Let $u\in \Dm(L)$. We say that $u$ \emph{has a weak conormal derivative in $L^2(\Gamma)$} if there exists a function $g\in L^2(\Gamma)$
    such that 
    \begin{equation}\label{eq.conormal}
    \frq[u,v]-\langle Lu, v\rangle_\Omega = \int_\Gamma g\;\overline{\trace v}\dd\sigma 
    \end{equation}
    for all $v\in H^1(\Omega)$. In this case, we set $\conormal u \coloneqq g$. Occasionally, we abbreviate the statement that
    $u$ has a weak conormal derivative in $L^2(\Gamma)$ by merely writing $\conormal u \in L^2(\Gamma)$.
\end{defn}

Under our assumptions on the coefficients, given $u\in H^1(\Omega)$, there is at most one function $g$ satisfying \eqref{eq.conormal}. Thus,
the conormal derivative is unique whenever it exists. Moreover, it depends on the operator $L$ only through the coefficients $A$ and $c$. 
If these coefficients are smooth enough to have a trace on the boundary, it can be shown that
\begin{equation}
\label{eq.formulanormal}
\conormal u = \sum_{j=1}^d\Big(\sum_{i=1}^d \trace a_{ij}D_iu +\trace c_j u\Big)\nu_j,
\end{equation}
where $\nu = (\nu_1, \ldots, \nu_d)$ denotes the unit outer normal of $\Omega$, which exists $\sigma$-almost everywhere on $\Gamma$. For
a proof of these facts and further information, we refer to \cite[Section 8.1]{A15}.

It is immediate from the definition of the conormal, that the domain of the operator associated to the form $\frq$ is given by
$\{ u\in \Dm(L) \suchthat \conormal u = 0\}$. More generally, we have the following:

\begin{lem}
    \label{l.weaksol}
    Let $f\in L^2(\Omega)$ and $g\in L^2(\Gamma)$. If $u\in H^1(\Omega)$ satisfies
    \[
    \frq[u,v] = \int_\Omega f\overline{v}\dd \lambda +\int_\Gamma g\;\overline{\trace v}\dd \sigma, 
    \] 
    for all $v\in H^1(\Omega)$, 
    then $u\in \Dm(L)$, $Lu=f$ and $\conormal u = g$. In this case, we say that $u$ is a \emph{weak solution} of the Neumann boundary value problem
    \begin{equation}
        \label{eq.neumannproblem}
        \begin{cases}
            Lu & = f \qquad\text{in }\Omega, \\
            \conormal u & = g \qquad\text{on }\Gamma.
        \end{cases}
    \end{equation}
\end{lem}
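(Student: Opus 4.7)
The strategy is the standard two-tier test-function argument: first restrict to compactly supported test functions to identify the distributional action of $\mathfrak{L}$, then use the full class of $H^1$ test functions to read off the conormal derivative.

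First I would insert $v = \varphi \in C_c^\infty(\Omega)$ into the hypothesis. Since such $\varphi$ has zero trace on $\Gamma$, the boundary integral drops out, leaving
\[
\frq[u,\varphi] = \int_\Omega f\,\overline{\varphi}\dd\lambda.
\]
Comparing with the definition \eqref{eq.ldist} of $\mathfrak{L}$, one sees directly that $\frq[u,\varphi] = \langle \mathfrak{L}u,\varphi\rangle$ for every $\varphi \in C_c^\infty(\Omega)$, so the hypothesis gives $\langle \mathfrak{L}u,\varphi\rangle = \int_\Omega f\,\overline{\varphi}\dd\lambda$ for all test functions. By the definition \eqref{eq.l2} of $\Dm(L)$ this shows $u\in\Dm(L)$ and $Lu = f$.

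Next I would take an arbitrary $v \in H^1(\Omega)$ in the hypothesis and subtract $\langle Lu,v\rangle_\Omega = \int_\Omega f\,\overline{v}\dd\lambda$ from both sides. The volume terms cancel, leaving
\[
\frq[u,v] - \langle Lu, v\rangle_\Omega = \int_\Gamma g\,\overline{\trace v}\dd\sigma
\]
for every $v \in H^1(\Omega)$. This is precisely the defining identity \eqref{eq.conormal} for the weak conormal derivative with $\conormal u = g$.

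I do not expect any genuine obstacle: the proof is essentially a bookkeeping exercise that unwinds the definitions in the correct order. The only point worth noting is that the uniqueness remark following the definition of $\conormal$ guarantees that identifying $g$ as \emph{a} function satisfying \eqref{eq.conormal} is enough to conclude it equals $\conormal u$.
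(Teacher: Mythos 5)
Your proof is correct and follows exactly the same route as the paper: test against $C_c^\infty(\Omega)$ to identify $u\in\Dm(L)$ with $Lu=f$, then subtract $\langle Lu,v\rangle_\Omega$ for general $v\in H^1(\Omega)$ to recognize the defining identity \eqref{eq.conormal} for $\conormal u = g$. The paper's version is just more terse; your added remark on uniqueness of the conormal is a fine clarification.
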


\begin{proof}
    That $u\in \Dm(L)$ with $Lu=f$ follows by considering $v\in C_c^\infty(\Omega)$. But then
    \[
    \frq[u,v] -\langle Lu, v\rangle_\Omega = \int_\Gamma g\;\overline{\trace v}\dd\sigma
    \]
    for all $v\in H^1(\Omega)$ and $\conormal u = g$ follows from the definition of the weak conormal derivative.
\end{proof}
Similarly we may also consider $\frq^\lambda[u,v]:=\frq[u,v]+\lambda \langle u,v \rangle_\Omega$ for $\lambda \in \R$ to define a weak solution $u \in H^1(\Omega)$ of the shifted Neumann problem
 \begin{equation}
        \label{eq.neumannproblem.lambda}
        \left\{ \begin{aligned}
            (\lambda+L)u & = f \qquad\text{in }\Omega, \\
            \conormal u & = g \qquad\text{on }\Gamma,
        \end{aligned}\right.
    \end{equation} 
which is uniquely solvable for large enough $\lambda$ by \cite[Proposition 3.7]{Nittka}.
We next recall a regularity result from \cite{Nittka}.
\begin{lem}
    \label{l.nittkahoelder}
    Let $d\geq 2$ and  $\eps>0$ be given.  Then there exist constants $\alpha \in (0,1)$ and $C>0$ such that the following holds:
    \begin{enumerate}[\upshape (i)]
        \item Let  $\lambda$ large enough, $f\in L^{d-1+\eps}(\Omega)$ and $g\in L^{d-1+\eps}(\Gamma)$. Then the Neumann problem \eqref{eq.neumannproblem.lambda} has a unique weak solution $u$. This solution $u$ belongs to $C^{\alpha}(\Omega)$ and
    \[
    \label{eq.nittkahoelder}
    \|u\|_{C^{\alpha}} \leq C\big(\|f\|_{\Omega, d-1+\eps} + \|g\|_{\Gamma, d-1+\eps}\big).
    \]
    \item If $u \in H^1(\Omega)$ is a weak solution of \eqref{eq.neumannproblem} and additionally satisfies $u \in L^{d-1+\eps}(\Omega)$, then
    \[
    \label{eq.nittkahoelder2}
    \|u\|_{C^{\alpha}} \leq C\big(\|u\|_{\Omega,d-1+\eps}+\|f\|_{\Omega, d-1+\eps} + \|g\|_{\Gamma, d-1+\eps}\big).
    \]
    \end{enumerate} 
\end{lem}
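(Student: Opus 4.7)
The plan is to derive both statements as essentially direct applications of the regularity theory developed in \cite{Nittka}, with part (ii) following from (i) by a shifting argument.

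For part (i), I would first invoke \cite[Proposition 3.7]{Nittka} (already cited in the paragraph preceding the lemma) to obtain, for $\lambda$ sufficiently large, the existence and uniqueness of a weak solution $u \in H^1(\Omega)$ to \eqref{eq.neumannproblem.lambda} whenever $f \in L^2(\Omega)$ and $g \in L^2(\Gamma)$; since $d \geq 2$, the embedding $L^{d-1+\eps}(\Omega) \hookrightarrow L^2(\Omega)$ holds (using boundedness of $\Omega$), so the same existence and uniqueness result applies to our data. To upgrade this weak solution to Hölder regularity, I would then apply Nittka's main regularity theorem, which states that weak solutions to Neumann-type boundary value problems with data in $L^{d-1+\eps}(\Omega)$ and $L^{d-1+\eps}(\Gamma)$ are Hölder continuous with some exponent $\alpha \in (0,1)$, together with the corresponding norm estimate. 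The exponent $d-1+\eps$ is the sharp threshold for a De Giorgi--Moser--Stampacchia iteration on a $d$-dimensional Lipschitz domain that incorporates both the interior and the boundary contributions.

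For part (ii), the idea is a shift: if $u \in H^1(\Omega) \cap L^{d-1+\eps}(\Omega)$ is a weak solution of the unshifted problem \eqref{eq.neumannproblem}, then for any $\lambda \in \R$ the function $u$ is also a weak solution of \eqref{eq.neumannproblem.lambda} with $f$ replaced by $\tilde f := f + \lambda u$. Choosing $\lambda$ large enough that part (i) applies, one has $\tilde f \in L^{d-1+\eps}(\Omega)$ because $u$ does, by assumption. The uniqueness statement from (i) then identifies this $u$ with the unique weak solution produced there, and the Hölder estimate of (i) yields
\[
\|u\|_{C^\alpha} \leq C\bigl(\|\tilde f\|_{\Omega, d-1+\eps} + \|g\|_{\Gamma, d-1+\eps}\bigr) \leq C\bigl(\|f\|_{\Omega, d-1+\eps} + |\lambda|\|u\|_{\Omega, d-1+\eps} + \|g\|_{\Gamma, d-1+\eps}\bigr),
\]
which, after absorbing $|\lambda|$ into the constant, gives exactly the claimed bound.

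The main (and really the only nontrivial) ingredient is Nittka's Hölder regularity theorem; once that is in hand, the remainder is a routine application combined with the elementary shifting trick. I do not anticipate any obstacle beyond verifying that the hypotheses of Nittka's theorem indeed cover our operator $L$ and form $\frq$, which they do by construction, since our coefficients satisfy Hypothesis \ref{h.prelim} and $\Gamma$ is Lipschitz.
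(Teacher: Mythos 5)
Your proposal follows the paper's own route: part (i) is a citation of Nittka's regularity result (the paper cites \cite[Lemma 3.10]{Nittka} directly, which already packages existence, uniqueness, and the $C^\alpha$ estimate), and part (ii) is obtained by precisely the shifting argument you describe, yielding $\|u\|_{C^\alpha}\le C(\|f+\lambda u\|_{\Omega,d-1+\eps}+\|g\|_{\Gamma,d-1+\eps})$ and absorbing $|\lambda|$ into $C$. So in substance you have reproduced the paper's proof.

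One inaccuracy is worth flagging. You justify applying the $L^2$ existence theory (Proposition 3.7) by the embedding $L^{d-1+\eps}(\Omega)\hookrightarrow L^2(\Omega)$, claiming it holds for all $d\ge 2$. On a bounded domain $L^p\hookrightarrow L^q$ requires $p\ge q$, so this inclusion fails precisely when $d=2$ and $\eps<1$, a case the lemma allows. The conclusion is unaffected: for $d=2$ the Sobolev embedding gives $H^1(\Omega)\hookrightarrow L^q(\Omega)$ for every finite $q$, hence $L^{1+\eps}(\Omega)\subset (H^1(\Omega))'$ for every $\eps>0$, and the Lax--Milgram/coercivity argument underlying Nittka's Proposition 3.7 still yields unique solvability. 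So the structure of your argument is fine, but the stated reason for applying the existence result is not correct across the full parameter range of the lemma.
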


\begin{proof}
For (i), see \cite[Lemma 3.10]{Nittka}. For (ii) note that if $u$ solves \eqref{eq.neumannproblem}, then $u$ also solves 
    \begin{equation}
        \left\{ \begin{aligned}
            (\lambda+L) u &= f+\lambda u \qquad &\text{in }\Omega, \\
            \conormal u &= g \qquad &\text{on }\Gamma
        \end{aligned}\right.
    \end{equation}
    for every $\lambda$. If $\lambda$ is large enough, part (i) yields
    \[
    \|u\|_{C^{\alpha}} \leq C\big(\|f+\lambda u\|_{\Omega, d-1+\eps} + \|g\|_{\Gamma, d-1+\eps}\big). \qedhere
    \]
\end{proof}

\begin{cor}
    \label{c.imorove}
    Let $d\geq 3$, $\eps>0$ be given and define $\psi : [2,\infty) \to [2, \infty]$ by 
    \[
     \psi(p) \coloneqq 
     \begin{cases} \frac{d- 3 + \eps}{d-1+\eps-p} p, & \mbox{ for } 2\leq p < d-1+\eps,\\
     \infty, & \mbox{ for } p\geq d-1+\eps.
     \end{cases}
    \]
    Then, if $f\in L^p(\Omega)$, $g\in L^p(\Gamma)$, and $u$ is a weak solution of~\eqref{eq.neumannproblem}, then
    $u\in L^{\psi(p)}(\Omega)$ and $\trace u \in L^{\psi(p)}(\Gamma)$. Moreover, there is a constant
    $C$ independent of $f$ and $g$ such that
    \[
    \|u\|_{\Omega, \psi(p)} +\|\trace u\|_{\Gamma, \psi(p)} \leq C\big(\|u\|_{\Omega, p} +\|f\|_{\Omega, p} + \|g\|_{\Gamma, p}\big).
    \]
\end{cor}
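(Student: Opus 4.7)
The plan is to apply the Riesz--Thorin interpolation theorem to the solution operator of the shifted Neumann problem~\eqref{eq.neumannproblem.lambda}, interpolating between an $L^2$-endpoint coming from the form $\frq^\lambda$ and an $L^\infty$-endpoint coming from Lemma~\ref{l.nittkahoelder}(i). The key numerical observation is that $\psi(p)$ is precisely the target exponent produced by Riesz--Thorin between the pairs $(p_0, q_0) = (2, 2)$ and $(p_1, q_1) = (d-1+\eps, \infty)$.

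I would first fix $\lambda > 0$ large enough that~\eqref{eq.neumannproblem.lambda} is uniquely weakly solvable in $H^1(\Omega)$ for any data in $L^2(\Omega) \times L^2(\Gamma)$. Viewing the disjoint union $\oug$ as a measure space carrying Lebesgue and surface measure, so that $L^r(\oug) \cong L^r(\Omega) \times L^r(\Gamma)$, I define the linear solution operator $T_\lambda \colon (f, g) \mapsto (u, \trace u)$. At $p = 2$, Lax--Milgram together with continuity of the trace $H^1(\Omega) \to L^2(\Gamma)$ gives $T_\lambda \in \mathcal{L}(L^2(\oug), L^2(\oug))$; at $p = d-1+\eps$, Lemma~\ref{l.nittkahoelder}(i) gives $u \in C^{\alpha}(\overline{\Omega})$ with the stated norm bound, hence $T_\lambda \in \mathcal{L}(L^{d-1+\eps}(\oug), L^\infty(\oug))$.

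Applying Riesz--Thorin then yields, for each $\theta \in (0,1)$, a bound $T_\lambda \colon L^{p_\theta}(\oug) \to L^{q_\theta}(\oug)$, where $1/p_\theta = (1-\theta)/2 + \theta/(d-1+\eps)$ and $1/q_\theta = (1-\theta)/2$. A short computation eliminates $\theta$ and yields
\[
\frac{1}{q_\theta} \;=\; \frac{d-1+\eps - p_\theta}{p_\theta\,(d-3+\eps)} \;=\; \frac{1}{\psi(p_\theta)},
\]
so $T_\lambda$ is bounded $L^p(\oug) \to L^{\psi(p)}(\oug)$ for every $p \in [2, d-1+\eps]$. The range $p > d-1+\eps$ is handled separately: since $\Omega$ and $\Gamma$ have finite measure, H\"older's inequality embeds $L^p$ into $L^{d-1+\eps}$ and Lemma~\ref{l.nittkahoelder}(i) directly yields $u,\trace u \in L^\infty = L^{\psi(p)}$.

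Finally, given $u$ a weak solution of the unshifted problem~\eqref{eq.neumannproblem} with $u, f \in L^p(\Omega)$ and $g \in L^p(\Gamma)$, the same $u$ is also the unique weak solution of~\eqref{eq.neumannproblem.lambda} with data $(f + \lambda u, g) \in L^p(\oug)$; applying the bound on $T_\lambda$ to this pair and absorbing the factor $\lambda$ into the constant produces the claimed estimate. I expect the main technical obstacle to be the Riesz--Thorin arithmetic verifying $q_\theta = \psi(p_\theta)$ and checking that both endpoint estimates are available on the same complex-linear operator defined on the same dense subclass; conceptually everything else reduces to the two endpoint estimates already provided by Lemma~\ref{l.nittkahoelder}(i) and the form $\frq^\lambda$.
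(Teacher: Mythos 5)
Your proposal is correct and follows essentially the same route as the paper's proof: both fix a large $\lambda$, interpolate the shifted solution operator between the $L^2\to L^2$ endpoint (coming from coercivity of $\frq^\lambda$ and continuity of the trace) and the $L^{d-1+\eps}\to L^\infty$ endpoint from Lemma~\ref{l.nittkahoelder}(i), and then transfer the estimate to the unshifted problem by rewriting $u$ as the unique solution of~\eqref{eq.neumannproblem.lambda} with data $(f+\lambda u,g)$. The only cosmetic difference is that you invoke Riesz--Thorin on $L^p(\oug)$ directly while the paper phrases the same step as complex interpolation of the product spaces $L^p(\Omega)\times L^p(\Gamma)$ together with the standard identification of $L^p$-interpolation; these are equivalent once one identifies the product with $L^p(\oug)$.
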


\begin{proof}
We fix $\lambda \in \R$ large enough so that \eqref{eq.neumannproblem.lambda} is uniquely solvable. 
Note that for  $u\in C^{\alpha}(\Omega)$ we have $u\in L^\infty(\Omega)$ and $\trace u \in L^\infty(\Gamma)$. 
Thus,  Lemma \ref{l.nittkahoelder}(i) implies that there exists a constant $C$ such that 
    \begin{equation}
        \label{eq.infinty}
        \|\tilde u\|_{\Omega, \infty} + \|\trace \tilde u\|_{\Gamma, \infty} \leq C\big(\|\tilde f\|_{\Omega, d-1+\eps} + \|\tilde g\|_{\Gamma, d-1+\eps}\big)
    \end{equation}
    whenever $\tilde f\in L^{d-1+\eps}(\Omega)$, $\tilde g\in L^{d-1+\eps}(\Gamma)$, and $\tilde u$ is the unique weak solution of the Neumann problem~\eqref{eq.neumannproblem.lambda} with right-hand sides $\tilde f$ and $\tilde g$.

    On the other hand, \cite[Lemma 3.7 and 3.8]{Nittka} yield that there is a constant $C$ such that for $\tilde f\in L^2(\Omega)$ and
    $\tilde g\in L^2(\Gamma)$ and the weak solution $\tilde u$ of~\eqref{eq.neumannproblem.lambda} it holds that
    \begin{equation}
        \label{eq.2}
        \|\tilde u\|_{\Omega, 2} +\|\trace \tilde u\|_{\Gamma, 2} \leq C \big(\|\tilde f\|_{\Omega, 2} + \|\tilde g\|_{\Gamma, 2}\big).
    \end{equation}

    We may now use an interpolation argument similar to \cite[Lemma 3.11]{Nittka}. We put $X_0\coloneqq L^2(\Omega)\times L^2(\Gamma)$,
    $X_1\coloneqq L^{d-1+\eps}(\Omega)\times L^{d-1+\eps}(\Gamma)$, $Y_0 \coloneqq  L^2(\Omega)\times L^2(\Gamma)$
    and $Y_1\coloneqq L^\infty(\Omega)\times L^\infty(\Gamma)$. Consider the unique solution operator to problem \eqref{eq.neumannproblem.lambda}
    \[
    R_0 : X_0 \to Y_0, \quad (\tilde f,\tilde g) \mapsto (\tilde u, \trace \tilde u)
    \]
    which is continuous by \eqref{eq.2}. By \eqref{eq.infinty}, its restriction $R_1\coloneqq R_0|_{X_1}$ is also continuous
    from $X_1$ to $Y_1$. Using complex interpolation, it follows that $R_\theta \coloneqq R_0|_{[X_0:X_1]_\theta}$ is
    continuous from the interpolation space $[X_0:X_1]_\theta$ to $[Y_0:Y_1]_\theta$. Using that complex interpolation
    is compatible with Cartesian products (see \cite[\S 1.9]{Tri95}) and the standard identification of interpolation of $L^p$-spaces,
    it follows that
    \[
    \|\tilde u\|_{\Omega, \psi(p)} + \|\trace \tilde u\|_{\Gamma, \psi(p)} \leq C \big(\|\tilde f\|_{\Omega, p} + \|\tilde g\|_{\Gamma, p}\big)
    \]
    where $\frac{1}{\psi(p)} = \frac{(1-\theta)}{2}$ for the unique solution $\theta$ of $\frac{1}{p}=\frac{1-\theta}{2} + \frac{\theta}{d-1+\eps}$.
    
    Now let $u$ be a (not necessarily unique) weak solution of \eqref{eq.neumannproblem}. As in the proof of Lemma \ref{l.nittkahoelder} $u$ will also satisfy
    \[ \frq^\lambda[u,v]=\int_\Omega (f+\lambda u)\; \bar v \; \mathrm{d}x + \int_\Gamma g\; \overline{\trace v} \; \mathrm{d}\sigma\]
and thus coincide with the unique solution of 
 \begin{equation*}
        \left\{ \begin{aligned}
            (\lambda+L)\tilde u & = f+\lambda u \qquad &\text{in }\Omega, \\
            \conormal \tilde u & = g \qquad &\text{on }\Gamma,
        \end{aligned}\right.
    \end{equation*}
whence 
$\|u\|_{\Omega,p}+\|\trace u\|_{\Gamma,p}=\|\tilde u\|_{\Omega,p}+\| \trace \tilde u\|_{\Omega,p}\leq \|f+\lambda u\|_{\Omega,p}+\|g\|_{\Gamma,p}$, from which the claim follows.
\end{proof}

\section{The sectorial form}

We are now ready to introduce the Wentzell--Robin boundary conditions. We will work on the  Hilbert space $\calH = L^2(\Omega)\times L^2(\Gamma)$ and write $\calu = (u_1, u_2)\in \calH$. The scalar product on $\calH$ is defined by
\[
\langle \calu, \calv\rangle_\calH \coloneqq \langle u_1, v_1\rangle_\Omega + \langle u_2, v_2\rangle_\Gamma.
\]
Occasionally, we identify $\calH$ with $L^2(\oug)$, where we endow the disjoint
union $\oug$ with the product measure $\lambda\otimes\sigma$.

We point out that $\calH$ consists of complex-valued functions. The \emph{real part} of $\calH$ is the subspace
\begin{equation*}
    \calH_\R \coloneqq \{ \calu\in\calH \suchthat u_1\in L^2(\Omega;\R), u_2\in L^2(\Gamma;\R)\}
\end{equation*}
consisting of real-valued functions. A linear operator $\calS$ on $\calH$ is called \emph{real} if $\calS(\calH_\R)\subset \calH_\R$.\smallskip

The following are our standing assumptions:

\begin{hyp}\label{h.main}
    Assume Hypothesis \ref{h.prelim} and let the
    operators $B_{11}\in \calL(L^2(\Omega))$, $B_{22}\in \calL(L^2(\Gamma))$, $
    B_{12} \in \calL(L^2(\Gamma), L^2(\Omega))$ and $B_{21} \in \calL(L^2(\Omega), L^2(\Gamma))$ be \emph{real}
    in the sense that they map real-valued functions to real-valued functions. We define the bounded linear operator
    $\calB \in \calL(\calH)$ by setting
    \[
    \calB = \begin{pmatrix}
        B_{11} & B_{12}\\
        B_{21} & B_{22}
    \end{pmatrix}.
    \]
\end{hyp}

\begin{example}
    \label{ex.integraloperator}
    Motivated by the probabilistic interpretation from the introduction, an important example for the 
    operators $B_{kl}$ is given by \emph{integral operators}. Let us briefly recall the definition and some important
    properties of integral operators. Let $(S_j, \Sigma_j, \mu_j)$ be finite measure spaces for $j=1,2$. An operator
    $K\in \calL(L^2(S_1), L^2(S_2))$ is called \emph{integral operator} if there exists a product measurable map
    $k: S_1\times S_2 \to \C$ -- the \emph{kernel} of the integral operator -- such that
    \[
    [Kf](x) = \int_{S_1} k(x, y) f(y) \dd\mu_1(y) \qquad \mbox{ for } \mu_2\mbox{-almost every } x \in S_2.
    \]
    Buhvalov \cite{buhvalov} has characterized integral operators by the property that they map dominated, norm-convergent
    sequences to almost everywhere convergent sequences. 

    Interesting additional mapping properties can be characterized through integrability assumptions on the kernel. For example, a kernel operator
    $K$ maps $L^2(S_1)$ to $L^\infty(S_2)$ 
    if and only if $k\in L^\infty(S_2; L^2(S_1))$ in the sense that $\sup_{x\in S_2}\|k(x, \cdot)\|_{L^2(S_1)}<\infty$, see \cite[Theorem 1.3]{ab94}.
    In the case where $(S_1, \Sigma_1, \mu_1) = (S_2, \Sigma_2, \mu_2)$, it is well known that $K$ is a Hilbert--Schmidt operator 
    if and only if $k\in L^2(S_1\times S_1)$, see \cite[Theorem VI.6]{rs1}.
\end{example}

We now define the form $\fra\colon D(\fra)\times D(\fra) \to \C$ by setting 
\begin{equation}
    \label{eq.doma}
    D(\fra) = \{ \calu \in \calH \suchthat u_1 \in H^1(\Omega), u_2 = \trace u_1\}
\end{equation}
and then
\begin{align}
    \fra[\calu, \calv] &\coloneqq \frq[u_1, v_1] - \langle \calB\calu, \calv\rangle_\calH\\
    &= \frq[u_1, v_1] - \int_{\Omega} \big[B_{11}u_1 + B_{12}u_2\big]\overline{v_1}\dd\lambda - 
    \int_{\Gamma} \big[B_{21}u_1 + B_{22}u_2\big]\overline{v_2}\dd\sigma \notag
\end{align}
for $\calu,\calv \in D(\fra)$.

\begin{thm}
    \label{t.generate}
    Under Hypothesis~\ref{h.main}, the form $\fra$ is densely defined, closed and sectorial. The associated operator $\calA$ has compact resolvent, and $-\calA$
    generates an analytic, strongly continuous semigroup $(\calT(t))_{t\geq 0}$ on $\calH$. This semigroup is real in the sense that $\calT(t)\calH_\R \subset \calH_\R$ for all $t\geq 0$. 
\end{thm}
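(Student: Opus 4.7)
The plan is to verify that $\fra$ is densely defined, closed and sectorial, and then to invoke the standard form-theoretic machinery of \cite[Section 1.4]{Ouhabaz}. The key structural remark is that $D(\fra)$ is in natural bijection with $H^1(\Omega)$ via $\calu\mapsto u_1$, since $u_2$ is determined by $u_1$; the continuity of the trace operator then shows that, for $M$ large enough, the form norm $\Re\fra[\argument,\argument]+M\|\argument\|_\calH^2$ is equivalent to the $H^1(\Omega)$-norm of $u_1$. This reduces the analysis of $\fra$ to that of the familiar elliptic form $\frq$ on $H^1(\Omega)$, together with a perturbation $-\langle\calB\calu,\calv\rangle_\calH$ that is continuous on the ambient space $\calH$.

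For density of $D(\fra)$ in $\calH$, given $(f,g)\in\calH$ and $\eps>0$, I would first use that $\trace(H^1(\Omega))$ is dense in $L^2(\Gamma)$ to select $w\in H^1(\Omega)$ with $\|\trace w-g\|_{\Gamma,2}<\eps$, and then pick $h\in H^1_0(\Omega)$ with $\|h-(f-w)\|_{\Omega,2}<\eps$. The element $(w+h,\trace w)\in D(\fra)$ then approximates $(f,g)$ in $\calH$. For closedness and sectoriality, the uniform ellipticity of $A$ combined with Young's inequality absorbs the first-order $b,c$ terms of $\frq$ into $\tfrac{\eta}{2}\|\nabla u_1\|^2$, yielding
\[
\Re\frq[u_1,u_1]\geq \tfrac{\eta}{2}\|\nabla u_1\|_{\Omega,2}^2-C\|u_1\|_{\Omega,2}^2,\qquad |\Im\frq[u_1,u_1]|\leq C'\|\nabla u_1\|_{\Omega,2}\|u_1\|_{\Omega,2}.
\]
The $\calB$-perturbation contributes at most $\|\calB\|\|\calu\|_\calH^2$ in modulus, so after shifting by a sufficiently large multiple of the identity the resulting form is coercive on $D(\fra)$ in the equivalent $H^1$-norm and its imaginary part is dominated by its real part. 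Closedness then reduces to the completeness of $H^1(\Omega)$ under its standard norm.

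Once $\fra$ is densely defined, closed and sectorial, the generation of an analytic $C_0$-semigroup by $-\calA$ is immediate from \cite[Section 1.4]{Ouhabaz}. Compactness of the resolvent follows because the embedding $D(\fra)\hookrightarrow\calH$ is compact: on the first component this is Rellich--Kondrachov for Lipschitz domains, and on the second it is the compactness of the trace $H^1(\Omega)\to L^2(\Gamma)$. Realness of the semigroup follows from the fact that $\fra$ takes real values on real-valued arguments (since all coefficients of $\frq$ and all blocks of $\calB$ are real), via the standard invariance criterion for closed convex sets in the sense of \cite{Ouhabaz} applied to $\calH_\R$. I do not expect any serious obstacle; the only point requiring some care is that the $\calB$-perturbation is bounded in the coarser $\calH$-norm rather than the $L^2(\Omega)$-norm, so the coercivity and sectoriality constants must be tracked with respect to $\|\argument\|_\calH$ throughout rather than with respect to $\|\argument\|_{\Omega,2}$ alone.
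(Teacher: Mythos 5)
Your proposal is correct and follows essentially the same route as the paper: establish density via approximation of each component using the trace operator, derive closedness and sectoriality from the G\aa rding inequality for $\frq$ together with the $\calH$-boundedness of $\calB$ (noting as you do that the form norm reduces to the $H^1(\Omega)$-norm of $u_1$ since $u_2=\trace u_1$), obtain compactness of the resolvent from Rellich--Kondrachov and continuity of the trace, invoke \cite[Section 1.4]{Ouhabaz} for generation, and apply the Beurling--Deny/invariance criterion for realness. Your density argument (adding an $H^1_0$-correction) is a slightly more compact variant of the paper's two-step approximation, but the content is the same.
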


\begin{proof}
    We start by proving that $D(\fra)$ is dense in $\calH$. First note that $(\varphi, 0)\in D(\fra)$ whenever 
    $\varphi \in C_c^\infty(\Omega)$. It follows that $L^2(\Omega)\times\{0\} \subset \overline{D(\fra)}$. 
    Recall that the trace operator defines a bounded map from $H^1(\Omega)$ to $H^{1/2}(\Gamma)$ and the latter
    is dense in $L^2(\Gamma)$, see \cite[Theorem 3.38]{McLean00}. Thus, given $f_2\in L^2(\Gamma)$, we 
    find $u_1\in H^1(\Omega)$ with $\|f_2-\trace u_1\|^2 \leq \eps$.
    Now pick
    $\varphi\in C_c^\infty(\Omega)$ with $\|u_1-\varphi\|^2_\Omega\leq \eps$ and define $\calv=(v_1, v_2)$ by setting $v_1 = u_1 -\varphi$
    and $v_2 = \trace u_1 = \trace v_1$. Then $\calv \in D(\fra)$ and 
    \[
        \|(0,f_2) - \calv\|^2_\calH = \|v_1\|_\Omega^2 + \|\trace u_1 - f_2\|_\Gamma^2 \leq 2\eps.
    \]
    As $\eps>0$ was arbitrary, the claim follows. As $\overline{D(\fra)}$ is a vector space, it follows that
    $\overline{D(\fra)}=\calH$.\medskip

    To prove closedness, first observe that there exists $\tilde{\omega} \geq 0$, such that
    \[
    \Re \frq[u_1] + \tilde{\omega} \|u_1\|^2_\Omega \geq \eta\|u_1\|_{H^1}^2,
    \]
    see \cite[Equation (4.3)]{Ouhabaz}. In view of the boundedness of $\calB$, it follows that
    \[
    \Re \fra[\calu] + \omega\|\calu\|_\calH^2 \geq \eta \|u_1\|_{H^1}^2
    \]
    with $\omega = \tilde{\omega} + \|\calB\|$.
    On the other hand, we clearly have 
    \[
    |\fra [\calu,\calv]| \leq C \|u_1\|_{H^1}\|v_1\|_{H^1}
    \]
    for a constant $C$ that depends upon the $L^\infty$-bounds of the coefficients $a_{ij}, b_j,$ and $c_j$ for $i,j=1, \ldots, d$, and the operator
    norms $\|B_{kl}\|$ for $k,l=1,2$. This yields that the form $\fra$ is sectorial and that the associated norm
    \[
    \|\calu\|_\fra^2 := \Re\fra[\calu] + (\omega+ 1)\|\calu\|_{\calH}^2
    \]
    is equivalent to $\|u_1\|_{H^1}^2 + \|u_2\|_\Gamma^2$. Now let $\calu_n = (u_{1,n}, u_{2,n})$
    be a Cauchy sequence in $(D(\fra), \|\cdot\|_\fra)$. By what was done so far, $(u_{1,n})$ is a Cauchy sequence in $H^1(\Omega)$, hence convergent to some
    $u_1 \in H^1(\Omega)$, and $(u_{2,n})$ is a Cauchy sequence in $L^2(\Gamma)$ and thus has a limit, 
    say $u_2\in L^2(\Gamma)$. As the trace operator is continuous from $H^1(\Omega)$ to $L^2(\Gamma)$, it follows that $u_2=\trace u_1$,
    whence $\calu=(u_1, u_2) \in D(\fra)$. Thus, $\fra$
    is closed.

    Since $\Omega$ is a bounded Lipschitz domain, the embedding of $H^1(\Omega)$ into $L^2(\Omega)$ is compact. As $\trace : H^1(\Omega)\to L^2(\Gamma)$ is a continuous map, it follows that $\trace(H^1(\Omega))$ is a compact subset of $L^2(\Gamma)$. These observations imply that the embedding of $D(\fra)$ into $\calH$ is compact, and thus $\calA$ has compact resolvent.

    That the operator $-\calA$ generates an analytic, strongly continuous semigroup follows from general results concerning 
    densely defined, closed, sectorial forms, see Section 1.4 of 
    \cite{Ouhabaz}. To prove that the semigroup is real, we use a
    Beurling--Deny type criterion, see \cite[Proposition 2.5]{Ouhabaz}. We thus have to prove that for $\calu\in D(\fra)$,
    it holds that $\Re\calu \in D(\fra)$ and $\fra [\Re\calu, \Im\calu] \in \R$. 
    If $\calu = (u_1, u_2)\in D(\fra)$, then $\Re\calu = (\Re u_1, \Re u_2)$. 
    Since the trace operator is real, it follows that $\Re u_2 = \Re \trace u_1 = \trace \Re u_1$,
    proving $\Re\calu\in D(\fra)$. As all coefficients $a_{ij}, b_j, c_j$ are real-valued and the operators $B_{kl}$ are real,
    it easily follows that $\fra[\Re\calu, \Im\calu] \in \R$.
\end{proof}

We next identify the operator $\calA$ associated to the form $\fra$.

\begin{prop}
\label{p.domain}
    It holds that
    \[D(\calA)=\{\calu = (u_1, u_2) \suchthat u_1\in \Dm(L), \conormal u_1\in L^2(\Gamma), u_2 = \trace u_1\}\]
    and 
    \[\calA \calu= \begin{pmatrix}
        Lu_1 - B_{11}u_1 - B_{12}u_2\\ \conormal u_1 - B_{21}u_1 - B_{22}u_2
    \end{pmatrix}
    = \begin{pmatrix}
        Lu_1\\
        \conormal u_1
    \end{pmatrix} - \calB\calu.
    \]
\end{prop}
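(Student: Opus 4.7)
The plan is to proceed by double inclusion, using only the definition of the operator associated to a sectorial form: $\calu \in D(\calA)$ with $\calA\calu = \calf$ iff $\calu \in D(\fra)$ and $\fra[\calu,\calv] = \langle \calf, \calv\rangle_\calH$ for all $\calv \in D(\fra)$. The key tools are the definition of the weak conormal derivative and Lemma \ref{l.weaksol}.

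For the inclusion \supsetProof, I would start with $u_1\in \Dm(L)$ such that $\conormal u_1 \in L^2(\Gamma)$ and $u_2 = \trace u_1$. Given any $\calv=(v_1,\trace v_1) \in D(\fra)$, the definition of the weak conormal derivative gives
\[
\frq[u_1,v_1] = \langle Lu_1, v_1\rangle_\Omega + \langle \conormal u_1, \trace v_1\rangle_\Gamma.
\]
Subtracting $\langle \calB \calu, \calv\rangle_\calH$ on both sides then yields $\fra[\calu,\calv] = \langle \calf, \calv\rangle_\calH$ with $\calf = (Lu_1, \conormal u_1)^\top - \calB\calu \in \calH$, proving that $\calu \in D(\calA)$ with the claimed action.

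For the inclusion \subsetProof, let $\calu \in D(\calA)$ and set $\calf = \calA\calu = (f_1, f_2)$. Absorbing the zero-order terms, define $g_1 \coloneqq f_1 + B_{11}u_1 + B_{12}u_2 \in L^2(\Omega)$ and $g_2 \coloneqq f_2 + B_{21}u_1 + B_{22}u_2 \in L^2(\Gamma)$. For every $v_1 \in H^1(\Omega)$, the pair $(v_1, \trace v_1)$ lies in $D(\fra)$, and the identity $\fra[\calu,\calv] = \langle \calf, \calv\rangle_\calH$ reduces to
\[
\frq[u_1, v_1] = \int_\Omega g_1 \overline{v_1}\dd\lambda + \int_\Gamma g_2\,\overline{\trace v_1}\dd\sigma.
\]
Lemma \ref{l.weaksol} then applies and yields $u_1 \in \Dm(L)$, $Lu_1 = g_1$, and $\conormal u_1 = g_2 \in L^2(\Gamma)$. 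Unwinding the definitions of $g_1, g_2$ recovers $\calA\calu = (Lu_1, \conormal u_1)^\top - \calB\calu$, completing the proof.

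There is no serious obstacle here: the statement is essentially a bookkeeping exercise that separates the differential part $\frq$ from the bounded perturbation $\calB$. The only conceptual point is that because $D(\fra)$ enforces the coupling $u_2 = \trace u_1$, the boundary component $f_2$ of $\calA\calu$ is allowed to absorb the contribution that the conormal derivative produces when testing $\frq$ against arbitrary $v_1 \in H^1(\Omega)$, which is precisely why Lemma \ref{l.weaksol} applies cleanly.
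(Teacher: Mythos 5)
Your proof is correct and follows essentially the same route as the paper: double inclusion using the definition of the operator associated to a sectorial form, unwinding the weak conormal derivative for one inclusion and invoking Lemma \ref{l.weaksol} for the other. The only cosmetic difference is that the paper introduces a temporary name $\calC$ for the form operator and phrases things as $\calA=\calC$, while you work with the single symbol $\calA$ throughout.
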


\begin{proof}
For the time being, let $\calA$ be the operator from the statement of the Proposition and $\calC$ be the operator associated to
the form $\fra$, i.e.\ $\calu \in D(\calC)$ with $\calC \calu = \calf$ if and only if $\calu\in D(\fra)$ and $\langle \calf, \calv\rangle_{\calH}
= \fra[\calu, \calv]$ for all $\calv\in D(\fra)$. We prove that $\calA = \calC$.\smallskip

We start by proving $\calA\subset \calC$. It obviously holds that $D(\calA)\subset D(\fra)$. Moreover, if $\calu \in D(\calA)$, 
then 
\begin{align*}
    \langle \calA\calu, \calv\rangle_\calH & = \langle Lu_1, v_1\rangle_\Omega + \langle \conormal u_1, v_2 \rangle_\Gamma
    -\langle \calB \calu, \calv\rangle_\calH\\
    & = \langle Lu_1, v_1\rangle_\Omega + \langle \conormal u_1, \trace v_1\rangle_\Gamma
    -\langle \calB \calu, \calv\rangle_\calH\\
    & = \frq[u_1, v_1] - \langle\calB\calu, \calv\rangle_\calH = \fra[\calu, \calv]
\end{align*}
for all $\calv\in D(\fra)$. This proves that $\calu \in D(\calC)$ and $\calC\calu =\calA\calu$.\smallskip

Conversely, assume that $\calu \in D(\calC)$ with $\calC\calu = \calf$. In this case,
\begin{align}
\frq[u_1, v_1] - \langle \calB\calu, \calv\rangle_\calH & =
\langle f_1, v_1\rangle_\Omega + \langle f_2, v_2\rangle_\Gamma\notag\\
& = \langle f_1, v_1\rangle_\Omega +\langle f_2, \trace v_1\rangle_\Gamma\label{eq.associated}
\end{align}
for all $\calv\in D(\fra)$. It follows from Lemma \ref{l.weaksol} that $u_1\in \Dm(L)$ and $Lu_1 = f_1 + B_{11}u_1 + B_{12}u_2$
and  $\conormal u_1 = f_2+B_{21}u_1+ B_{22}u_2\in L^2(\Gamma)$. It follows
that $f_2= \conormal u_1 - B_{21}u_1-B_{22}u_2$. Altogether, we have proved that
$\calu\in D(\calA)$ and $\calC\calu = \calA\calu$. This finishes the proof.
\end{proof}

\begin{rem}
Note that every $\calu \in D(\calA^2)$ satisfies the generalized Wentzell boundary condition 
\[ 
\trace (Lu_1-B_{11}u_1-B_{12}u_2)= \partial_\nu^Lu_1-B_{21}u_1-B_{22}u_2.
\]
Proposition \ref{p.domain} shows that $\trace (Lu_1-B_{11}u_1-B_{12}u_2)$ exists in $L^2(\Gamma)$. The individual traces of the summands
need not exist in general. We also point out that since $\calT$ is analytic, we have $\calT(t)\calu_0 \in D(\calA^k)$ for 
all $t>0$, $k \in \N$, and $\calu_0 \in \calH$.
\end{rem}

Following the ideas of \cite[Sections 3 and 4]{Nittka} we also obtain H\"older continuity of elements of $D(\calA^k)$ for
large enough $k$.

\begin{lem}
    \label{l.prep}
    Given $d\geq 3$, $\eps>0$, assume that $\calB$ is bounded on $L^{d-1+\eps}(\Omega)\times L^{d-1+\eps}(\Gamma)$.
    Moreover, let $2\leq p < d-1+\eps$ and let the function $\psi$ be as in Corollary \ref{c.imorove}.
    Then, if $\calu \in D(\calA)\cap (L^p(\Omega)\times L^p(\Gamma))$ with $\calA\calu \in L^p(\Omega)\times L^p(\Gamma)$, it follows that
    $\calu \in L^{\psi(p)}(\Omega)\times L^{\psi(p)}(\Gamma)$.
\end{lem}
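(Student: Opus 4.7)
The plan is to decode the statement $\calA\calu = \calf$ via Proposition \ref{p.domain}, promote the regularity of the data to $L^p$, and then invoke Corollary \ref{c.imorove}.

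First I would unpack the hypotheses: by Proposition \ref{p.domain}, writing $\calf = \calA\calu \in L^p(\Omega)\times L^p(\Gamma)$ means $u_1 \in \Dm(L)$ with $u_2 = \trace u_1$ and
\[
    Lu_1 = f_1 + B_{11}u_1 + B_{12}u_2, \qquad \conormal u_1 = f_2 + B_{21}u_1 + B_{22}u_2
\]
in $L^2(\Omega)$ and $L^2(\Gamma)$ respectively. In particular, $u_1 \in H^1(\Omega)$ is a weak solution of the Neumann problem \eqref{eq.neumannproblem} with right-hand sides
\[
    \tilde f \coloneqq f_1 + B_{11}u_1 + B_{12}u_2, \qquad \tilde g \coloneqq f_2 + B_{21}u_1 + B_{22}u_2.
\]

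Next I would argue that $(\tilde f, \tilde g) \in L^p(\Omega)\times L^p(\Gamma)$. By Hypothesis~\ref{h.main}, the operator $\calB$ is bounded on $L^2(\Omega)\times L^2(\Gamma)$, and by assumption it is bounded on $L^{d-1+\eps}(\Omega)\times L^{d-1+\eps}(\Gamma)$. Complex interpolation (compatible with Cartesian products, exactly as used in the proof of Corollary \ref{c.imorove}) then yields boundedness of $\calB$ on $L^q(\Omega)\times L^q(\Gamma)$ for every $q\in [2, d-1+\eps]$, in particular for $q=p$. Since $\calu \in L^p(\Omega)\times L^p(\Gamma)$ by hypothesis, we conclude $\calB\calu \in L^p(\Omega)\times L^p(\Gamma)$, and combining with $\calf \in L^p(\Omega)\times L^p(\Gamma)$ gives the desired integrability of $\tilde f$ and $\tilde g$.

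Finally I would apply Corollary \ref{c.imorove} to the weak solution $u_1$ of~\eqref{eq.neumannproblem} with data $(\tilde f, \tilde g)\in L^p(\Omega)\times L^p(\Gamma)$ (and noting that $u_1 \in L^p(\Omega)$ already by assumption, so the corollary's norm estimate is applicable). This yields $u_1 \in L^{\psi(p)}(\Omega)$ and $\trace u_1 \in L^{\psi(p)}(\Gamma)$. Since $u_2 = \trace u_1$, we obtain $\calu \in L^{\psi(p)}(\Omega)\times L^{\psi(p)}(\Gamma)$, as claimed.

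The only delicate point is the interpolation step for $\calB$: one must verify that the assumed boundedness on the two endpoint product spaces transfers to intermediate product $L^p$-spaces, but this is precisely the same Cartesian-product interpolation already invoked in Corollary \ref{c.imorove}, so no new difficulty arises. The rest of the argument is a direct reduction to the scalar regularity statement.
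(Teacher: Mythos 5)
Your proof is correct and follows essentially the same route as the paper: interpolate to get boundedness of $\calB$ on $L^p(\Omega)\times L^p(\Gamma)$, use Proposition \ref{p.domain} to see that $Lu_1$ and $\conormal u_1$ differ from the components of $\calA\calu$ only by terms controlled by $\calB\calu$, and then invoke Corollary \ref{c.imorove}. (As a side remark, you have the signs in the rearrangement of Proposition \ref{p.domain} correct, namely $Lu_1 = (\calA\calu)_1 + B_{11}u_1 + B_{12}u_2$; the paper's proof writes a minus here, which is an immaterial typo.)
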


\begin{proof}
    It follows by interpolation that $\calB$ maps $L^p(\Omega)\times L^p(\Gamma)$ to itself for every $p\in [2, d-1+\eps)$. 
    We note that for $\calu\in D(\calA)$, it holds that $u_1\in \Dm(L) \subset H^1(\Omega)$ and 
    $Lu_1 = (\calA\calu)_1 - B_{11}u_1 - B_{12}u_2$ and $\conormal u_1 = (\calA\calu)_2 -B_{21}u_1 - B_{22}u_2$. 
    Thus, if $\calu\in L^p(\Omega)\times L^p(\Gamma)$ and $\calA\calu\in L^p(\Omega)\times L^p(\Gamma)$, 
    it follows that $Lu_1\in L^p(\Omega)$ and $\conormal u_1 \in L^p(\Gamma)$. The claim now follows from Corollary \ref{c.imorove}.
\end{proof}

\begin{thm}
\label{t.hoelder}
Assume Hypothesis \ref{h.main} and let $\eps>0$. If $d \geq 3$, assume that $\calB$ is bounded on $L^{d-1+\eps}(\Omega) \times L^{d-1+\eps}(\Gamma)$. Then there are $k=k(d)\in \N$ and $\alpha \in (0,1)$ such that $\calu\in D(\calA^k)$ implies $u_1\in C^{\alpha}(\Omega)$, and hence $\mathcal{u} \in L^\infty(\Omega) \times L^\infty(\Gamma)$.
\end{thm}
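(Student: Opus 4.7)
The plan is to reduce Theorem~\ref{t.hoelder} to Lemma~\ref{l.nittkahoelder}(ii) applied to $u_1$ at exponent $p = d-1+\eps$. In view of Proposition~\ref{p.domain}, $u_1$ is a weak solution of the Neumann problem with data
\[
f = (\calA\calu)_1 - B_{11}u_1 - B_{12}u_2, \qquad g = (\calA\calu)_2 - B_{21}u_1 - B_{22}u_2.
\]
Once we know that both $\calu$ and $\calA\calu$ lie in $L^{d-1+\eps}(\Omega)\times L^{d-1+\eps}(\Gamma)$, the boundedness of $\calB$ on $L^{d-1+\eps}(\Omega)\times L^{d-1+\eps}(\Gamma)$ places $f \in L^{d-1+\eps}(\Omega)$ and $g \in L^{d-1+\eps}(\Gamma)$, and Lemma~\ref{l.nittkahoelder}(ii) then yields $u_1 \in C^\alpha(\Omega)$. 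In particular $u_1 \in L^\infty(\Omega)$, and $u_2 = \trace u_1 \in L^\infty(\Gamma)$.

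The task therefore reduces to showing $\calu, \calA\calu \in L^{d-1+\eps}(\Omega) \times L^{d-1+\eps}(\Gamma)$, which I would establish by iterating Lemma~\ref{l.prep}. The ladder is initialised by Sobolev embedding: for any $\calv \in D(\calA)$ one has $v_1 \in H^1(\Omega) \hookrightarrow L^{2d/(d-2)}(\Omega)$ and $v_2 = \trace v_1 \in H^{1/2}(\Gamma) \hookrightarrow L^{2(d-1)/(d-2)}(\Gamma)$, whence $\calv \in L^{p_0}\times L^{p_0}$ with $p_0 := 2(d-1)/(d-2) > 2$. Setting $p_{\ell+1} := \psi(p_\ell)$ with $\psi$ from Corollary~\ref{c.imorove}, a direct computation from the explicit formula shows $\psi(p) > p$ for every $p \in (2, d-1+\eps)$ and $\psi(p)\to \infty$ as $p\uparrow d-1+\eps$; hence $p_m \geq d-1+\eps$ for some finite $m = m(d)$. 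Given $\calu \in D(\calA^k)$, each $\calA^j\calu$ with $j \leq k-1$ belongs to $D(\calA)$, hence to $L^{p_0}\times L^{p_0}$. Since Lemma~\ref{l.prep} requires simultaneous control of $\calA^j\calu$ and $\calA^{j+1}\calu$ in the same Lebesgue space, one step of the ladder upgrades $\calA^j\calu$ to $L^{p_1}\times L^{p_1}$ for $j\leq k-2$, and after $\ell$ steps we obtain $\calA^j\calu \in L^{p_\ell}\times L^{p_\ell}$ for $j \leq k-\ell-1$. Choosing $k := m + 2$ therefore delivers $\calu, \calA\calu \in L^{p_m}\times L^{p_m} \subseteq L^{d-1+\eps}\times L^{d-1+\eps}$, as required.

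The main obstacle is the bookkeeping: each iteration of the bootstrap consumes one power of $\calA$ from the domain hypothesis, so $k$ must scale with the number $m(d)$ of Moser-type steps needed for the exponent to exceed $d-1+\eps$. A small routine check is that $\calB$ remains bounded on every intermediate $L^{p_\ell}(\Omega)\times L^{p_\ell}(\Gamma)$, which follows by complex interpolation between $L^2$ and $L^{d-1+\eps}$ exactly as noted in the proof of Lemma~\ref{l.prep}. The case $d = 2$ requires no bootstrap at all: $H^1(\Omega)\hookrightarrow L^q(\Omega)$ for every finite $q$, so $k = 1$ suffices and Lemma~\ref{l.nittkahoelder}(ii) applies directly.
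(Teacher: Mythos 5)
Your argument follows the paper's proof essentially step for step: initialise the exponent ladder via Sobolev embedding, iterate Lemma~\ref{l.prep} using the map $\psi$ from Corollary~\ref{c.imorove}, consume one power of $\calA$ per iteration, and finish with Lemma~\ref{l.nittkahoelder}(ii) once both $\calu$ and $\calA\calu$ land in $L^{d-1+\eps}$. (You use the slightly sharper trace exponent $p_0 = \tfrac{2(d-1)}{d-2}$ from $H^{1/2}(\Gamma)$, whereas the paper starts from $p_1 = \tfrac{2d}{d-1}$, but this does not change the structure; and your observation that $\psi$ has no fixed point in $(2,d-1+\eps)$ and blows up at the right endpoint is a more explicit version of the paper's remark that $\psi^k(p)\to\infty$.) The only gap is that you omit the case $d=1$, which the paper handles in one line via the embedding $H^1(\Omega)\hookrightarrow C^{1/2}(\overline\Omega)$; you should add that case, since Hypothesis~\ref{h.main} does not exclude it. Also, for $d=2$ the relevant point is not that $H^1(\Omega)\hookrightarrow L^q(\Omega)$ for all finite $q$, but rather that one may take $\eps=1$ in Lemma~\ref{l.nittkahoelder}(ii) so that $d-1+\eps=2$ and the $L^2$-data $Lu_1, \conormal u_1$ (available for any $\calu\in D(\calA)$, without any extra assumption on $\calB$) already suffice; your conclusion $k=1$ is nonetheless correct.
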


\begin{proof}
If $\calu\in D(\calA)$, then $u_1\in \Dm(L) \subset H^1(\Omega)$ and \[Lu_1 = (\calA\calu)_1 - B_{11}u_1 - B_{12}u_2 \text{ and }
\conormal u_1 = (\calA\calu)_2 - B_{21}u_1 - B_{22}u_2.\] 

If $d=1$, then $H^1(\Omega)$ is continuously embedded into $C^{\alpha}(\Omega)$ and the result follows. In the case $d=2$, Lemma \ref{l.nittkahoelder}(ii), applied with $\eps=1$, implies that $u\in C^{\alpha}(\Omega)$ without additional regularity assumptions on $\calB$ as $d-1+\eps=2$.  

In the remaining case $d\geq 3$, it follows from Sobolev embedding that $u_1\in L^{\frac{2d}{d-2}}(\Omega)$ and $\trace u_1 \in L^{\frac{2d}{d-1}}(\Gamma)$.
It follows that $\calu \in L^{p_1}(\Omega)\times L^{p_1}(\Gamma)$ for $p_1= \frac{2d}{d-1}>2$. 
At this point, Lemma \ref{l.prep} and induction yield that $\calu\in D(\calA^k)$ implies $\calu \in L^{p_k}(\Omega)\times L^{p_k}(\Gamma)$, 
where $p_k \coloneqq \psi(p_{k-1})$, i.e. $p_k =\psi^{k-1}(p_1)$. It is clear from the structure of the function $\psi$ that $\psi^{k}(p) \to \infty$ as $k\to \infty$ 
for every $p\in (2, \infty)$. We thus find a smallest index $k^*$ such that $p_{k^*}\geq d-1+\eps$. Then for $u \in D(\calA^{k^*+1})$, we have $u, \calA u \in L^{p_{k^*}}(\Omega)\times L^{p_{k^*}}(\Gamma) \subset L^{d-1+\eps}(\Omega)\times L^{d-1+\eps}(\Gamma)$. 
It follows that also $\calB u \in L^{d-1+\eps}(\Omega) \times L^{d-1+\eps}(\Gamma)$. At this point Lemma \ref{l.nittkahoelder}(ii) yields $u_1\in C^{\alpha}(\Omega)$ for $\calu\in D(\calA^{k^*+1})$. 
\end{proof}

\section{Positivity and Markov properties}
\label{sec:positivity-markov}

Positivity is another key feature in this article, and therefore we recall some concepts regarding the order structure on $\calH$. The \emph{positive cone} in $\calH$ is
\begin{equation*}
    \calH_+ \coloneqq \{ \calu\in\calH_\R \suchthat u_1\ge 0, u_2\ge 0 \}.
\end{equation*}
Here and in what follows, $u_1\geq 0$ means $u_1(x) \geq 0$ for $\lambda$-almost every $x\in \Omega$ and $u_2\geq 0$ 
means $u_2(x)\geq 0$ for $\sigma$-almost every $x\in \Gamma$. 
We write $\calu\geq 0$ if $\calu\in \calH_+$. The notation $\calu>0$ indicates $\calu \geq 0$ and $\calu\neq 0$.
We say that $\calu$ is \emph{strictly positive}, and write $\calu\gg 0$, if $u_1(x)>0$ for $\lambda$-almost every
$x\in \Omega$ and $u_2(x)>0$ for $\sigma$-almost every $x\in \Gamma$.

The lattice operations of supremum and infimum in $\calH$ are denoted respectively by
\begin{equation*}
    \calu\vee \calv \coloneqq \sup(\calu,\calv), \qquad \calu\wedge \calv \coloneqq \inf(\calu, \calv),
\end{equation*}
for all $\calu,\calv\in\calH$, and should be interpreted component-wise; for instance,
\begin{equation*}
    \calu \vee \calv = (u_1\vee v_1, u_2\vee v_2).
\end{equation*}
Moreover, we define the \emph{positive} and \emph{negative parts} of an element $\calu\in\calH$ by
\begin{equation*}
    \calu^+ \coloneqq \calu\vee 0, \qquad \calu^- \coloneqq (-\calu)\vee 0,
\end{equation*}
and the \emph{modulus} is given by
\begin{equation*}
    |\calu| \coloneqq \calu \vee (-\calu) = (|u_1|, |u_2|).
\end{equation*}

An operator $\calS$ on $\calH$ is called \emph{positive} if $\calS(\calH_+)\subset \calH_+$. Observe that a positive operator is automatically real. We denote the constant function on $\calH$ with value 1 by $\one = (\one_\Omega, \one_\Gamma)$. A positive operator
$\calS$ on $\calH$ is called Markovian, if $\calS\one = \one$; it is called (\emph{sub-})\emph{Markovian} if $\calS\one\leq \one$.
We call a semigroup $\calS = (\calS(t))_{t\geq 0}$ positive ((sub-)Markovian) if every operator $\calS(t)$ is positive ((sub-)Markovian).\smallskip

Before characterizing positivity and (sub-)Markovianity of the semigroup $\calT$ associated to the form $\fra$, we recall the following notion.

\begin{defn}
    If $(M,\mu)$ is a measure space and $H= L^2(M, \mu)$, then 
    a bounded, real linear operator $S: H \to H$ is said to \emph{satisfy the positive minimum principle} if
    $\langle Sf, g\rangle \geq 0$ whenever $f, g \in H_+$ satisfy $\langle f,g\rangle = 0$.
\end{defn}

The importance of operators that satisfy the positive minimum principle stems from the following result, which is taken from \cite[Theorem C-II.1.11]{schreibmaschiene}, where it is stated in the general setting of Banach lattices.

\begin{lem}
    \label{l.pmp}
    Let $A$ be a real, bounded linear operator on $L^2(M, \mu)$. The following are equivalent:
    \begin{enumerate}[label=\upshape(\roman*)]
        \item $e^{tA} = \sum_{k=0}^\infty \frac{(tA)^k}{k!} \geq 0$ for all $t\geq 0$;
        \item $A$ satisfies the positive minimum principle;
        \item $A+\|A\|I \geq 0$.
    \end{enumerate}
\end{lem}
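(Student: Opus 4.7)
The plan is to verify the cycle (iii)~$\Rightarrow$~(i)~$\Rightarrow$~(ii)~$\Rightarrow$~(iii). The first implication is essentially algebraic: since $A$ and $\|A\|I$ commute, $e^{tA} = e^{-t\|A\|} e^{t(A+\|A\|I)}$, and under (iii) each power of $A+\|A\|I$ is a positive operator, so every partial sum of the exponential series lies in the positive cone; passing to the operator-norm limit preserves positivity, and multiplication by the positive scalar $e^{-t\|A\|}$ is harmless. For (i)~$\Rightarrow$~(ii), I would fix $f,g\in H_+$ with $\langle f,g\rangle = 0$ and note that $t\mapsto\langle e^{tA}f,g\rangle$ is real-analytic (as $A$ is bounded), non-negative on $[0,\infty)$ by (i), and vanishes at $t=0$; its right-derivative at $0$, which equals $\langle Af,g\rangle$, is therefore $\geq 0$.

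The substantive implication is (ii)~$\Rightarrow$~(iii), which I would carry out via resolvent positivity. Given $\lambda > \|A\|$ and $f\geq 0$, set $u = (\lambda I - A)^{-1}f$ and decompose $u = u^+ - u^-$. Testing the identity $\lambda u - Au = f$ against $u^-$ and using $\langle u,u^-\rangle = -\|u^-\|^2$ gives $\langle Au,u^-\rangle = -\lambda\|u^-\|^2 - \langle f,u^-\rangle$. Because $u^+$ and $u^-$ are disjointly supported, PMP yields $\langle Au^+, u^-\rangle \geq 0$, while $|\langle Au^-,u^-\rangle|\leq \|A\|\|u^-\|^2$ by Cauchy--Schwarz; together these force $(\lambda - \|A\|)\|u^-\|^2 \leq -\langle f,u^-\rangle \leq 0$, so $u^- = 0$. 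Hence $(\lambda I - A)^{-1}\geq 0$ for every $\lambda > \|A\|$, and the Chernoff/Hille--Yosida formula $e^{tA}f = \lim_n (I - (t/n)A)^{-n} f$ then recovers (i) as a norm limit of products of positive operators.

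The main obstacle I anticipate is the final step from (i) to the stronger statement (iii): positivity of the semigroup does not automatically upgrade to positivity of the shifted generator. In finite dimensions the argument is transparent---PMP forces non-negative off-diagonal entries of $A$, while the operator-norm bound $|a_{ii}|\leq \|A\|$ handles the diagonal, so $A+\|A\|I$ has entry-wise non-negative matrix---but in $L^2(M,\mu)$ one has to combine PMP (the "off-diagonal" condition) with the form bound $|\langle Af,f\rangle|\leq \|A\|\|f\|^2$, which yields $\langle (A+\|A\|I)f,f\rangle\geq 0$ on $H_+$ (the "diagonal" control), through the order structure of the ambient Banach lattice. This is precisely the content of Theorem~C-II.1.11 in \cite{schreibmaschiene}, and I would invoke it for this final step rather than reprove it.
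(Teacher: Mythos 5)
The paper's proof of this lemma consists entirely of a citation to~\cite[Theorem C-II.1.11]{schreibmaschiene}. Your proposal, by contrast, spells out most of the implications explicitly, and those parts are correct: the computation for (iii)~$\Rightarrow$~(i) via $e^{tA} = e^{-t\|A\|}e^{t(A+\|A\|I)}$ is fine, the right-derivative argument for (i)~$\Rightarrow$~(ii) is fine, and the resolvent positivity argument — decomposing $u=(\lambda I - A)^{-1}f$ into $u^+ - u^-$, testing against $u^-$, using PMP for the cross term and Cauchy--Schwarz for the self term, and concluding $u^-=0$ — is a clean and correct argument.

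There are two points worth flagging, though. First, a labeling issue: you announce the resolvent argument as the proof of (ii)~$\Rightarrow$~(iii), but what it actually delivers, together with the Hille--Yosida approximation, is (ii)~$\Rightarrow$~(i). So by the end of your second paragraph you have established (iii)~$\Rightarrow$~(i)~$\Leftrightarrow$~(ii) but still no route back to (iii). Second, and more substantively, the genuinely hard implication — that (i) or (ii) upgrades to the form bound $A + \|A\|I \ge 0$ — is exactly the step you postpone and then resolve by invoking the same reference the paper cites. Your finite-dimensional heuristic (PMP controls off-diagonal entries, $|a_{ii}|\le\|A\|$ controls the diagonal) is the right picture, but it does not transfer verbatim to $L^2(M,\mu)$, where extracting and bounding the ``diagonal'' multiplication part of a general bounded operator is not immediate; this is precisely the lattice-theoretic content of the cited theorem. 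So in the end your proof is not an independent alternative: it is a more explicit rendering of the easy implications plus the same citation for the hard one, which is a perfectly legitimate way to present the lemma but should be advertised as such rather than as a full proof of the cycle.
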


We point out that it is equivalent to ask in (iii) that there is some $\alpha \geq 0$ such that $A+\alpha I\geq 0$, as the semigroup generated
by $A+\alpha I$ is given by $(e^{\alpha t}e^{tA})_{t\geq 0}$ and this is positive if and only if the semigroup $(e^{tA})_{t\geq 0}$ is positive.
It follows that a bounded linear operator $A$ on an $L^2$-space satisfies the positive minimum principle if and only if it can be written as
$A=P-M$, where $P$ is a positive operator and $M$ is a multiplication operator, i.e.\ 
$[Mf](x) = m(x)f(x)$ for some function $m\in L^\infty(M)$. 
This representation is the appropriate generalization of a matrix with positive off-diagonal entries to the $L^2$-setting.
An important special case is when $P$ is a positive integral operator and $M$ is chosen in such a way that $A\one=0$.

\begin{example}
    Let $0\leq k \in L^\infty(\Omega; L^2(\Omega))$ and put $\mu(x) = \int_\Omega k(x, y)\dd y$. Then the operator $A\in \calL(L^2(\Omega))$,
    defined by
    \[
    [Af](x) = \int_\Omega k(x,y)[f(y) - f(x)]\dd y = \int_\Omega k(x,y)f(y)\dd y - \mu(x)f(x)
    \]
    satisfies the positive minimum principle.
\end{example}

We can now characterize positivity and (sub-)Markovianity of the semigroup $\calT$.
\begin{thm}\label{t.characterization}
Assume Hypothesis \ref{h.main}. For parts {\upshape(b)} and {\upshape(c)} additionally assume that $c \in W^{1,\infty}(\Omega;\R^d)$. We denote the unit outer normal of $\Omega$ by $\nu$.
\begin{enumerate}
[\upshape (a)]
    \item The semigroup $\calT$ is positive if and only if
    \begin{enumerate}[\upshape (i)]
        \item $B_{12}$ and $B_{21}$ are positive operators;
        \item $B_{11}$ and $B_{22}$ satisfy the positive minimum principle.
    \end{enumerate}
    \item The semigroup $\calT$ is sub-Markovian if and only if
    \begin{enumerate}
    [\upshape (i)]
        \item $\calT$ is positive, i.e.\ conditions {\upshape(i)} and
        {\upshape(ii)} of part {\upshape(a)} are satisfied,
        \item $\Div c + B_{11}\one_\Omega + B_{12}\one_\Gamma \leq 0$ and
        \item $B_{21}\one_\Omega + B_{22}\one_\Gamma \leq c\cdot\nu$.
    \end{enumerate}
    \item The semigroup $\calT$ is Markovian if and only if 
    conditions {\upshape(i) -- (iii)} from {\upshape(b)} are satisfied with equality in {\upshape(ii)} and {\upshape(iii)}.
\end{enumerate}
\end{thm}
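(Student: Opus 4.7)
The strategy is to apply the Beurling--Deny--Ouhabaz criteria to the form $\fra$, supplemented by a direct computation of $\calA\one$ on constants.

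For part (a), I apply the form criterion for positivity (see, e.g., \cite[Theorem 2.6]{Ouhabaz}): $\calT$ is positive iff $D(\fra)$ is stable under $\calu\mapsto\calu^+$ and $\fra[\calu^+,\calu^-]\leq 0$ for every real $\calu\in D(\fra)$. Stability is immediate, since $u_1\in H^1(\Omega)$ implies $u_1^+\in H^1(\Omega)$ with $\trace u_1^+=(\trace u_1)^+=u_2^+$. Using the pointwise vanishing of $D_iu_1^+\cdot D_ju_1^-$, $u_1^+\cdot D_ju_1^-$, and $D_iu_1^+\cdot u_1^-$ on the disjoint supports of $u_1^\pm$, one sees $\frq[u_1^+,u_1^-]=0$, so the criterion reduces to
\begin{equation*}
\langle\calB\calu^+,\calu^-\rangle_\calH
=\langle B_{11}u_1^+,u_1^-\rangle_\Omega+\langle B_{12}u_2^+,u_1^-\rangle_\Omega+\langle B_{21}u_1^+,u_2^-\rangle_\Gamma+\langle B_{22}u_2^+,u_2^-\rangle_\Gamma\geq 0.
\end{equation*}
Sufficiency of (i)--(ii) is then term by term, using disjointness of positive and negative parts.

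For necessity, the plan is to engineer test functions that isolate each $B_{kl}$. Taking $u_1\in C_c^\infty(\Omega)$ kills the boundary component and yields the positive minimum principle for $B_{11}$. To separate $B_{12}$, I choose $u_1=\psi-f$ with $f\in C_c^\infty(\Omega)_+$ arbitrary and $\psi\in H^1(\Omega)_+$ a nonnegative extension of a prescribed $g\in H^{1/2}(\Gamma)_+$ whose support is disjoint from $f$; then $\calu^+=(\psi,g)$ and $\calu^-=(f,0)$, so only $\langle B_{11}\psi,f\rangle_\Omega$ and $\langle B_{12}g,f\rangle_\Omega$ survive. Choosing $\psi$ as a shell-type extension (with trace datum fixed but $\|\psi\|_\Omega$ arbitrarily small) eliminates the first term, leaving $\langle B_{12}g,f\rangle_\Omega\geq 0$, hence $B_{12}\geq 0$ after density. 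A symmetric choice $u_1=f-\psi$ yields $B_{21}\geq 0$, while $u_1=\psi_+-\psi_-$ with thin extensions of two disjointly supported $g_\pm\in H^{1/2}(\Gamma)_+$ delivers the positive minimum principle for $B_{22}$.

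For (b) and (c), the additional hypothesis $c\in W^{1,\infty}(\Omega;\R^d)$ lets me place $\one=(\one_\Omega,\one_\Gamma)$ in $D(\calA)$: integration by parts shows $L\one_\Omega=-\Div c\in L^\infty(\Omega)$, while the divergence theorem applied to $\sum_j\int_\Omega c_j\overline{D_jv}\dd\lambda$ identifies the weak conormal derivative as $\conormal\one_\Omega=c\cdot\nu$. Proposition~\ref{p.domain} then gives
\begin{equation*}
\calA\one=\begin{pmatrix}-\Div c-B_{11}\one_\Omega-B_{12}\one_\Gamma\\[2pt] c\cdot\nu-B_{21}\one_\Omega-B_{22}\one_\Gamma\end{pmatrix}.
\end{equation*}
Given positivity of $\calT$, sub-Markovianity is equivalent to $\calA\one\geq 0$: the $\Leftarrow$ direction uses positivity of the resolvent together with the identity $\tfrac{1}{\lambda}\one-(\lambda+\calA)^{-1}\one=\tfrac{1}{\lambda}(\lambda+\calA)^{-1}\calA\one\geq 0$ and the Yosida exponential formula to deduce $\calT(t)\one\leq\one$; the $\Rightarrow$ direction differentiates $t\mapsto\calT(t)\one$ at $t=0$. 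Componentwise, $\calA\one\geq 0$ is exactly (ii)--(iii) of part (b); Markovianity means $\calT(t)\one=\one$, equivalently $\calA\one=0$, which forces equality in both (ii) and (iii).

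The main obstacle is the necessity in (a): the constraint $u_2=\trace u_1$ forbids choosing $u_1$ and $u_2$ independently, so the four summands in $\langle\calB\calu^+,\calu^-\rangle_\calH$ cannot be probed in isolation by naive choices. The workaround is the shell-extension trick -- the surjection $\trace:H^1(\Omega)\twoheadrightarrow H^{1/2}(\Gamma)$ admits, for any fixed trace datum, right inverses of arbitrarily small $L^2(\Omega)$-norm -- which renders the unwanted $\Omega$-contributions negligible while preserving the boundary data needed to test $B_{12}$, $B_{21}$, and $B_{22}$.
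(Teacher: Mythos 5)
Your proof is correct and follows the same high-level strategy (form criteria for positivity plus a computation of $\calA\one$), but it deviates from the paper's in two respects. For the necessity of (a), the paper exploits the density of $D(\fra)$ in $\calH$ established in Theorem~\ref{t.generate} in an abstract way: it approximates the targets $(f_1,-f_2)$, $(f_1,0)$, $(0,f_2)$ by sequences $\calu_n\in D(\fra)$, applies $\langle\calB\calu_n^+,\calu_n^-\rangle_\calH\ge 0$, and passes to the limit using continuity of the lattice operations $\calu\mapsto\calu^\pm$. Your shell-extension device reaches the same conclusions, but it requires verifying by hand that one can build elements of $D(\fra)$ whose positive and negative parts carry prescribed interior and boundary data, have disjoint supports, and have arbitrarily small $\|\cdot\|_\Omega$; that is essentially a concrete re-derivation of the paper's density argument, with more moving parts. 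The more substantial divergence is in (b): you establish $\one\in D(\calA)$, compute $\calA\one$ componentwise from Proposition~\ref{p.domain}, and observe that, given positivity, sub-Markovianity is equivalent to $\calA\one\ge 0$ via the resolvent identity and the exponential formula. This is genuinely more economical than the paper's route, which proves necessity with Ouhabaz's Beurling--Deny criterion for sub-Markovian semigroups and proves sufficiency by constructing modified operators $\tilde B_{12},\tilde B_{21}$ so that the associated form generates a Markovian semigroup $\tilde\calT$ dominating $\calT$. The paper's heavier construction pays off later, however: the dominating semigroup $\tilde\calT$ is reused in the proof of Theorem~\ref{t.positiveasymptotic}(b). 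Your part (c) agrees with the paper's argument.
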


\begin{proof}
    (a) As $\calT$ is real, it follows from \cite[Theorem 2.6]{Ouhabaz} that $\calT$ is positive if and only if for every real-valued
    $\calu \in D(\fra)$, it holds that $\calu^+\in D(\fra)$ and $\fra [\calu^+, \calu^-] \leq 0$. An easy calculation shows that 
    (i) and (ii) are sufficient for the positivity of $\calT$. 

    To prove that they are necessary, assume that $\calT$ is positive and let $\calu =(u_1, u_2) \in D(\fra)$ be real. 
    Note that $u_1^+ \in H^1(\Omega)$ whenever $u_1\in H^1(\Omega)$ is real. In this case, we also have $(\trace u_1)^+ = \trace (u_1^+)$, so that   $\calu^+ = (u_1^+,u_2^+)$ belongs to $D(\fra)$ whenever $\calu \in D(\fra)$.

    Next, we recall Stampacchia's lemma~\cite[Lemma 7.6]{GT}, which states that $D_j u_1^+ = \one_{\{u_1>0\}}D_ju_1$ and $D_ju_1^- = \one_{\{u_1<0\}} D_ju_1$. It follows that
    $\frq [u_1^+, u_1^-]=0$ for all $u\in H^1(\Omega)$. Thus, \cite[Theorem 2.6]{Ouhabaz} implies that
    \begin{equation}\label{eq.positivebd}
    0\leq \langle \calB\calu^+, \calu^-\rangle_\calH. 
    \end{equation}
    It follows from Theorem \ref{t.generate} that, given $f_1\in L^2(\Omega)_+$ and $f_2\in L^2(\Gamma)_+$, 
    we find a sequence $(\calu_n)\subset D(\fra)$ with $\calu_n \to (f_1, - f_2)$ in $\calH$.
    By continuity of the lattice operations, $(u_{1,n})^+ \to f_1$ and $(u_{1,n})^- \to 0$ in $L^2(\Omega)$ and $(u_{2,n})^+ \to 0$,
    $(u_{2,n})^- \to f_2$ in $L^2(\Gamma)$. Thus, using  \eqref{eq.positivebd} with $\calu=\calu_n$, it follows upon letting $n\to \infty$
    that 
    \[
    0\leq \int_\Gamma (B_{21}f_1) f_2\dd \sigma.
    \]
    As $f_1$ and $f_2$ are arbitrary, the positivity of $B_{21}$ follows. The positivity of $B_{12}$ is proved similarly, approximating
    $(-f_1, f_2)$ instead. This proves condition (i).

    Approximating $(f_1, 0)$ for arbitrary $f_1 \in L^2(\Omega)$, \eqref{eq.positivebd} yields
    $\langle B_{11} f_1^+, f_1^-\rangle_\Omega \geq 0$. Given $f,g\in L^2(\Omega)_+$ with $\langle f, g\rangle_\Omega = 0$, we may consider
    $f_1 = f-g$, so that $f_1^+=f$ and $f_1^-=g$, to infer $\langle B_{11}f,g\rangle_\Omega \geq 0$. This proves that $B_{11}$ satisfies
    the positive minimum principle. To establish the positive minimum principle for $B_{22}$, we approximate $(0, f_2)$ for 
    arbitrary $f_2\in L^2(\Gamma)$ and argue similarly.\medskip

    (c) Let us consider the case where $\calT$ is positive. In this case, $\calT$ is Markovian if and only if for
    $\one = (\one_\Omega, \one_\Gamma)$ we have $\calT(t)\one = \one$ for all $t\geq 0$ which, in turn, is equivalent to $\one\in \ker(-\calA)$.
    
We note that $\one_\Omega\in \Dm(L)$ with  $L\one_\Omega = -\Div c$ and
 $\conormal\one_\Omega = c\cdot\nu$. It thus follows from
    Proposition \ref{p.domain} that $\one\in \ker(-\calA)$ if and only if $\Div c + B_{11}\one_\Omega + B_{12}\one_\Gamma =0$
    and $B_{21}\one_\Omega + B_{22}\one_\Gamma = c\cdot\nu$.\medskip

    (b) To prove necessity of (i) -- (iii), assume that $\calT$ is sub-Markovian. Then $\calT$ is positive and (i) follows from part (a).  
    By the Beurling--Deny criterion (see \cite[Corollary 2.17]{Ouhabaz}) $\calT$ is sub-Markovian
    if and only if for every $\calu\in D(\fra)\cap\calH_+$ it holds that $\calu\wedge \one$ in $D(\fra)$ and
    $\fra[\calu\wedge\one, (u-\one)^+]\geq 0$.
    Noting that $D_j(u_1\wedge \one_\Omega) = \one_{\{u_1< 1\}}D_j u$ and $D_j(u_1-\one_\Omega)^+ = \one_{\{u_1>1\}}D_j u$, it follows
    that 
    \begin{align*}
        0 & \leq \fra[\calu\wedge \one, (\calu-\one)^+]\\
        & = \sum_{j=1}^d \int_\Omega c_j D_j(u_1-\one_\Omega)^+\dd \lambda - \int_\Omega \big[B_{11}(u_1\wedge \one_\Omega) + B_{12}(u_2\wedge \one_\Gamma)\big](u_1 - \one_\Omega)^+\dd \lambda\\
        & \qquad - \int_\Gamma \big[B_{21}(u_1\wedge \one_\Omega) + B_{22}(u_2\wedge\one_\Gamma)\big](u_2-\one_\Gamma)^+\dd \sigma.
    \end{align*}
    Integrating by parts in the first integral and inserting $\calu = \calv +\one$ for $0\leq \calv \in D(\fra)$, it follows that 
    \[
    \int_\Omega \big(B_{11}\one_{\Omega} + B_{12}\one_{\Gamma} + \Div c \big)v_1\dd\lambda +\int_\Gamma \big(
    B_{22}\one_\Gamma + B_{21}\one_{\Omega} - c\cdot\nu \big)v_2\dd\sigma \leq 0.
    \]
    By density, this inequality extends to arbitrary $\calv \in \calH_+$ and this proves the necessity of conditions (ii) and (iii).\smallskip    

    It remains to prove the sufficiency of conditions (i) -- (iii). We observe that, in view of part (a), (i) immediately implies that
    $\calT$ is positive. We now employ a technical construction. 
    Note that the orthogonal projection onto the linear span of $\one_\Gamma$ is given by $P_\Gamma u \coloneqq \sigma(\Gamma)^{-1} \langle u, \one_\Gamma\rangle_\Gamma \one_\Gamma$. We define a new operator $\tilde{B}_{12} \in \calL(L^2(\Gamma), L^2(\Omega))$ by
    \[
    \tilde{B}_{12}u \coloneqq B_{12}(I-P_\Gamma)u- \frac{1}{\sigma(\Gamma)}\langle u, \one_\Gamma\rangle_\Gamma \big( B_{11}\one_\Omega + \Div c \big).
    \]
    Setting $u = \one_\Gamma$ yields $\Div c + B_{11}\one_\Omega + \tilde{B}_{12}\one_\Gamma = 0$. 
    If $0\leq u \in L^2(\Gamma)$, then
    $\langle u, \one_\Gamma\rangle \geq 0$ and condition (ii) yields
    \[
    \tilde{B}_{12} u \geq B_{12}(I-P_\Gamma)u + \frac{1}{\sigma(\Gamma)} \langle u, \one_\Gamma\rangle_\Gamma B_{12}\one_\Gamma  = B_{12}u.
    \]
    This proves that $B_{12} \leq \tilde{B}_{12}$. In particular, as $B_{12}$ is positive, so is $\tilde{B}_{12}$.
    Similarly, for every $u\in L^2(\Omega)$, we consider the orthogonal projection $P_\Omega u:=\lambda(\Omega)^{-1}\langle u,\one_\Omega\rangle_\Omega \one_\Omega$ and define
    \begin{equation*}
        \tilde{B}_{21}u \coloneqq B_{21}(I-P_\Omega)u - \frac{1}{\lambda(\Omega)}\langle u,\one_\Omega\rangle \big(B_{22}\one_\Gamma - c\cdot\nu \big)
    \end{equation*}
    for all $u\in L^2(\Omega)$. One checks as above that $\tilde{B}_{21} \in \calL(L^2(\Omega), L^2(\Gamma))$ is a positive operator such that
    $\tilde{B}_{21}\one_\Omega + B_{22}\one_\Gamma = c\cdot\nu$ and $B_{21}\leq \tilde{B}_{21}$.   

    Now consider the operator
    \[
    \tilde{\calB} = \begin{pmatrix}
        B_{11} &  \tilde{B}_{12}\\
        \tilde{B}_{21} & B_{22}
    \end{pmatrix}
    \]
    and define $\tilde{\fra}[\calu,\calv] \coloneqq \frq[\calu, \calv] - \langle \tilde\calB\calu, \calv\rangle_\calH$
    for $\calu, \calv\in D(\tilde{\fra}) \coloneqq D(\fra)$. 
    It follows from part (c) that the semigroup $\tilde{\calT}$ associated with $\tilde{\fra}$ is Markovian. It is straightforward to check that
    $\tilde{\fra}[\calu, \calv] \leq \fra[\calu, \calv]$ for all $0\leq \calu, \calv \in D(\fra)$. Thus, by the Ouhabaz domination criterion
    for positive semigroups (see \cite[Theorem 2.2.4]{Ouhabaz}), it follows that
    \[
    0 \leq \calT(t)\calf \leq \tilde{\calT}(t)\calf \qquad \mbox{ for all } t\geq 0, \calf\in \calH_+.
    \]
    As $\tilde{\calT}$ is Markovian, this clearly implies that $\calT$ is sub-Markovian.
\end{proof}

\begin{rem}
    (i) The assumption that $c \in W^{1,\infty}(\Omega;\R^d)$ in Theorem \ref{t.characterization} is necessary for 
    parts (b) and (c). Indeed, \cite[Example 4.4]{Nittka} provides an example that without this assumption the semigroup $\calT$ (even after possible rescaling) is not contractive on $L^\infty(\Omega)\times L^\infty(\Gamma)$ and thus, in particular, not sub-Markovian. 
    (ii) The conditions of Theorem \ref{t.characterization}(a) are equivalent to the positivity of the semigroup $(e^{t\calB})_{t\geq 0}$ 
    on $\ltwoog$ and thus to the operator $\calB$ satisfying the positive minimum principle. 
\end{rem}

We end this section by discussing \emph{irreducibility} of the semigroup $\calT$. If $E$ is a Banach lattice, then a subspace $J$ of
$E$ is called an \emph{ideal} if
\begin{enumerate}[(i)]
    \item $u\in J$ implies $|u|\in J$; and
    \item if $0\leq v\leq u$ and $u\in J$, then also $v\in J$.
\end{enumerate}
A strongly continuous semigroup on $E$ is called \emph{irreducible} if the only closed ideals that are invariant under the semigroup are 
$\{0\}$ and $E$. Often, an irreducible semigroup is tacitly assumed to be positive. This is the case, for example, in~\cite[Section C-III.3]{schreibmaschiene}, where one can find a characterization of irreducibility for strongly continuous positive semigroups on Banach lattices. However, if the semigroup is positive and analytic, 
as is the case when the semigroup arises from a form, then irreducibility is equivalent to the formally stronger
notion of \emph{positivity improving} in the sense that $f>0$ implies $S(t)f \gg 0$ for all $t>0$; cf.\ \cite[Definition 2.8 \& Theorem 2.9]{Ouhabaz} or~\cite[C-III Theorem 3.2(b)]{schreibmaschiene}. This type of result has recently been shown to hold for \emph{eventually} positive semigroups in~\cite{AG24} (see Proposition 3.12 in particular), where the reader will find a more thorough investigation of irreducibility under eventual positivity assumptions.
We stress, however, that in our terminology, an irreducible semigroup is \emph{not} assumed to be positive (or even eventually positive), in general.

If $E=L^p(M)$ ($1\leq p <\infty)$, then $J\subset E$ is a closed ideal if and only if there is a measurable subset $S\subset M$ such that
\[
J = \{f\in L^p(M) \suchthat f|_{S} = 0\; \text{a.e.}\};
\]
see for instance~\cite[Proposition 10.15]{BFR17}. In order to make use of this characterization in our setting, we will identify
$L^2(\Omega)\times L^2(\Gamma)$ with $L^2(\oug, \lambda\otimes\sigma)$ as before.

\begin{prop}
    \label{p.irreducible}
    If $\Omega$ is connected then the semigroup $\calT$ is irreducible.
\end{prop}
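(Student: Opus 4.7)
The plan is to combine the characterization of closed ideals in $L^2(\oug)$ with Ouhabaz's invariance criterion for closed convex sets under form semigroups, reducing irreducibility to a statement about characteristic functions in $H^1(\Omega)$, where the connectedness of $\Omega$ finally enters.

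Identifying $\calH = L^2(\oug, \lambda\otimes\sigma)$, every closed ideal takes the form $J_S = \{\calu\in\calH : \calu|_S = 0\}$ for some measurable $S = S_1\sqcup S_2$ with $S_1\subset\Omega$ and $S_2\subset\Gamma$. Write $M_i \coloneqq \Omega^{(\Gamma)}\setminus S_i$; the orthogonal projection onto $J_S$ is $P\calu = (\one_{M_1} u_1,\one_{M_2} u_2)$. Assuming $J_S$ is invariant under the semigroup $\calT$ associated with $\fra$, I would invoke Ouhabaz's criterion for invariance of closed convex sets (\cite[Theorem 2.2]{Ouhabaz}) to conclude that $P$ must map $D(\fra)$ into itself. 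Spelled out, this says that for every $u_1\in H^1(\Omega)$,
\[
\text{(a) } \one_{M_1}u_1 \in H^1(\Omega), \qquad \text{(b) } \trace(\one_{M_1}u_1) = \one_{M_2}\trace u_1.
\]

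The heart of the argument is step (a) with the choice $u_1 = \one_\Omega$: this forces $\one_{M_1}\in H^1(\Omega)$. Since $\one_{M_1}^2 = \one_{M_1}$ a.e., the chain rule for weak derivatives (applied for example via Stampacchia's lemma as in the proof of Theorem~\ref{t.characterization}) yields $(1-2\one_{M_1})\nabla\one_{M_1} = 0$; as $|1-2\one_{M_1}|=1$ almost everywhere, we get $\nabla\one_{M_1}=0$ in $L^2$. Connectedness of $\Omega$ then forces $\one_{M_1}$ to be a.e. equal to a constant in $\{0,1\}$, so either $|M_1|=0$ or $|S_1|=0$. This is the step I expect to be the main obstacle, chiefly because one must apply the chain rule to a characteristic function, but the calculation is standard.

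To finish, I would exploit (b) together with the surjectivity of $\trace\colon H^1(\Omega)\to H^{1/2}(\Gamma)$ and the density of $H^{1/2}(\Gamma)$ in $L^2(\Gamma)$ (which was already used in the proof of Theorem~\ref{t.generate}). If $|S_1|=|\Omega|$, then (b) reads $0 = \one_{M_2}\trace u_1$ for every $u_1\in H^1(\Omega)$, forcing $\one_{M_2}=0$ a.e., hence $J_S=\{0\}$. If $|S_1|=0$, then (b) reads $\one_{S_2}\trace u_1 = 0$ for every $u_1\in H^1(\Omega)$, forcing $\one_{S_2}=0$ a.e., hence $J_S = \calH$. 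Either way, $J_S\in\{\{0\},\calH\}$, which is the desired irreducibility. No positivity hypotheses on $\calT$ are needed for this argument, in line with the terminological remark preceding the statement.
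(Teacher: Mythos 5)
Your argument is correct and follows the same general strategy as the paper (characterize closed ideals of $L^2(\oug)$, apply a Beurling--Deny type invariance criterion to reduce to a domain-preservation statement, and finish with connectedness), but the execution differs in two places worth noting. First, the paper invokes Ouhabaz's ideal-specific criterion~\cite[Theorem 2.10 and Corollary 2.11]{Ouhabaz}, while you appeal to the general closed-convex-set criterion; both routes deliver the same necessary condition $P\,D(\fra)\subset D(\fra)$. In either case you should observe, as the paper does, that the form $\fra$ is only sectorial rather than accretive, so one must first shift $\fra$ by $\omega\|\cdot\|_\calH^2$; since $J_S$ is a subspace, invariance under $\calT(t)$ and under $e^{-\omega t}\calT(t)$ are the same, so the rescaling is harmless. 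Second, and this is the nicer difference: the paper tests the domain-preservation condition against $u_1\in C_c^\infty(\Omega)$ and then refers to the argument of \cite[Theorem 4.5]{Ouhabaz} to conclude $\lambda(\Omega\setminus S_1)=0$, whereas you test against the single element $\one\in D(\fra)$, obtain $\one_{M_1}\in H^1(\Omega)$ directly, and give a self-contained derivation of $\nabla\one_{M_1}=0$ from the idempotency $\one_{M_1}^2=\one_{M_1}$ via the product rule in $H^1\cap L^\infty$; connectedness then makes $\one_{M_1}$ constant. This is slightly more elementary and avoids an outside reference. Your treatment of the boundary component via condition (b) and density of $\trace(H^1(\Omega))$ in $L^2(\Gamma)$ likewise mirrors the paper's use of the observation that $u_1=0$ a.e.\ forces $\calu=0$ in $D(\fra)$; both handle the $\Gamma$-part the same way in substance. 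Overall the proof is sound; just add the one-line remark on rescaling to cover accretivity.
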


\begin{proof}
    Given a measurable subset $S$ of $\oug$, we identify $L^2(S)$ with the closed subspace 
    \[
    \{ f\in L^2(\oug) \suchthat f|_{S^c}=0\quad \text{a.e.}\}.
    \]
     To establish irreducibility, we have to prove that $\calT(t)L^2(S)\subset L^2(S)$ for all $t>0$ implies $(\lambda\otimes\sigma)(S)=0$
    or $(\lambda\otimes \sigma)((\oug)\setminus S)=0$. To that end, we can use a Beurling--Deny type criterion, see \cite[Theorem 2.10 and Corollary 2.11]{Ouhabaz}.
    We point out that the assumption of accretivity in that Theorem is not needed, as we may rescale the semigroup appropriately.
    It thus suffices to prove that if $S \subset \oug$ satisfies $\one_S\calu\in D(\fra)$ for all $\calu\in D(\fra)$, then
    $(\lambda\otimes\sigma)(S) =0$ or $(\lambda\otimes\sigma)((\oug)\setminus S) = 0$. Assume that $(\lambda\otimes\sigma)(S)>0$.
    Note that if $\calu \in D(\fra)$ satisfies $u_1=0$ almost everywhere then $\calu =0$. This implies that $S_1 \coloneqq S\cap \Omega$ has positive Lebesgue measure. It follows that for every $u_1\in C_c^\infty(\Omega)$ we have $\one_{S_1}u_1\in H^1(\Omega)$.
    Arguing as in the proof of \cite[Theorem 4.5]{Ouhabaz}, we see that this is only possible if $\lambda (\Omega\setminus S_1) = 0$.
    It now follows that also $(\lambda\otimes\sigma)((\oug)\setminus S) = 0$.
\end{proof}

\section{The semigroup on the space of continuous functions}

Under the assumptions of Theorem \ref{t.hoelder} the semigroup $\calT$ maps $\ltwoog$ into $\linftyog$. In particular, 
we may consider the restriction $\calT_\infty$ of $\calT$ to $\linftyog$. We prove that, under appropriate assumptions,
this restriction is a strong Feller semigroup in the sense of Definition \ref{def.sf}. Our setting is as follows.

As a compact space, we choose $M = \overline{\Omega}\times\Gamma$, endowed with the product topology and let $\mu = \lambda\otimes\sigma$.
Then $\mu(B(x,\eps))>0$ for all $x\in M$ and $\eps>0$. Moreover, we may then identify $\ltwoog$ with $L^2(M, \mu)$ and $\linftyog$ with $L^\infty(M, \mu)$. Likewise, the space $C(M)$ can be identified with $C(\overline{\Omega})\times C(\Gamma)$. We will consider the space
\[
\calC \coloneqq \{ \calu \suchthat u_1\in C(\overline{\Omega}) \mbox{ with } u_2 = \trace u_1\} \subset C(\overline{\Omega})\times C(\Gamma),
\]
which is obviously closed in $C(\overline{\Omega})\times C(\Gamma)$.
We start with addressing the situation where $\calB=0$. We denote the form $\fra$ with $\calB=0$ by $\frh$ and the associated operator by $\calL$. The semigroup generated by $-\calL$ is denoted by $\calS$.

\begin{prop} 
    \label{p.Lstrongfeller}
    Assume Hypothesis \ref{h.prelim} and additionally $c\in W^{1,\infty}(\Omega; \R^d)$. Then, the semigroup $\calS$ restricts to a weak$^*$-semigroup $\calS_\infty$
    in the sense of Definition \ref{def.ws} on the space $\linftyog$. The semigroup $\calS_\infty$ is a strong Feller semigroup with
    respect to $\calC$. In particular, it restricts to a strongly continuous semigroup $\calS_\calC$ on $\calC$.
\end{prop}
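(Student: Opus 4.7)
The strategy proceeds in four phases, starting with $L^\infty$-boundedness and ending with strong continuity on $\calC$; the first three phases are routine applications of the tools already assembled, and only the last is delicate. To begin, I would pick $\omega > 0$ with $\Div c \le \omega$ a.e.\ on $\Omega$ and $c \cdot \nu \ge -\omega$ a.e.\ on $\Gamma$, which is possible since $c \in W^{1,\infty}(\Omega; \R^d)$. The shifted form $\frh + \omega\langle \cdot, \cdot\rangle_\calH$ fits Hypothesis~\ref{h.main} with $\calB = -\omega I$, and the three conditions of Theorem~\ref{t.characterization}(b) are then readily verified; hence the rescaled semigroup $(e^{-\omega t}\calS(t))_{t\ge 0}$ is sub-Markovian, giving $\|\calS(t)\|_{\linftyog\to \linftyog} \le e^{\omega t}$.

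Next, for the weak$^*$-semigroup property, the $L^\infty$-boundedness combined with the standard $L^2$-duality argument shows that the $L^2$-adjoint $\calS(t)^*$ restricts to a bounded operator on $L^1(\oug)$, so $\calS_\infty(t)$ is identified as the weak$^*$-dual of this $L^1$-semigroup. Weak$^*$-continuity of $t \mapsto \calS_\infty(t)\calf$ for fixed $\calf \in \linftyog$ will follow by approximating $L^1$-test functions by functions in $L^2 \cap L^1$ and combining the $\ltwoog$-strong continuity of $\calS$ with the uniform $L^\infty$-bound from the first step. For the strong Feller mapping property, analyticity places $\calS(t)\calu_0$ inside $D(\calL^k)$ for every $k \in \N$, $t > 0$, and $\calu_0 \in \calH$; since $\calB = 0$ is trivially bounded on every $L^p$-space, Theorem~\ref{t.hoelder} applies and, for large enough $k$, yields $u_1 \in C^\alpha(\overline\Omega)$ whenever $\calu \in D(\calL^k)$. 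The trace $u_2 = \trace u_1$ is then the pointwise boundary restriction $u_1|_\Gamma \in C(\Gamma)$, so $\calS_\infty(t)\linftyog \subset \calC$ for every $t > 0$.

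It remains to establish the strong continuity of $\calS_\calC$. The two previous paragraphs yield $\calS_\calC(t)\calC \subset \calC$ with $\|\calS_\calC(t)\| \le e^{\omega t}$, and strong continuity at each $t_0 > 0$ is immediate from analyticity combined with the smoothing. For continuity at $t = 0$, I plan to argue by density on the subspace $\{(v, \trace v) : v \in C^\infty(\overline\Omega)\}$, which is sup-norm dense in $\calC$ by polynomial approximation. For such $\calv$ a direct computation yields $\calv \in D(\calL)$ and $\calL\calv = (Lv, \conormal v) \in \linftyog$, so the Duhamel identity
\[
\calS(t)\calv - \calv = -\int_0^t \calS(s)\calL\calv \dd s,
\]
valid a priori in $\ltwoog$, holds pointwise a.e.\ because the integrand is bounded in $\linftyog$ uniformly in $s \in [0, t]$. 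Taking essential sup yields $\|\calS_\calC(t)\calv - \calv\|_\infty \le \|\calL\calv\|_\infty (e^{\omega t} - 1)/\omega$, which vanishes as $t \to 0^+$, and a standard $3\eps$-argument then extends the convergence from this dense subset to all of $\calC$ using the uniform bound.

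The main obstacle in this plan is the last step: bridging the natural $\ltwoog$-strong continuity of $\calS$ to sup-norm continuity on $\calC$ at $t = 0$. The density of smooth test functions inside $D(\calL)$, combined with the sub-Markovian sup-norm bound from the rescaling step, is what makes the quantitative Duhamel estimate available and hence completes the argument.
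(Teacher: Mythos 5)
The first three phases of your plan are sound. For the $L^\infty$-bound, the paper simply cites \cite[Proposition 4.5]{Nittka}, whereas your rescaling argument (choosing $\omega$ with $\Div c\le\omega$, $c\cdot\nu\ge-\omega$ and applying Theorem~\ref{t.characterization}(b) with $\calB=-\omega I$) is a valid alternative; both give $\|\calS(t)\|_{\infty\to\infty}\le e^{\omega t}$. Your arguments for the weak$^*$-semigroup property and for the mapping $\calS_\infty(t)\linftyog\subset\calC$ via Theorem~\ref{t.hoelder} track the paper's reasoning.

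The last phase, however, has a genuine gap. You claim that for $v\in C^\infty(\overline\Omega)$ the pair $\calv=(v,\trace v)$ lies in $D(\calL)$ with $\calL\calv=(Lv,\conormal v)\in\linftyog$. This fails under Hypothesis~\ref{h.prelim}: the principal coefficients $a_{ij}$ are only assumed to be in $L^\infty(\Omega)$, so even for smooth $v$ the distribution $\mathfrak{L}v$, which formally contains $-\sum_{i,j}D_j(a_{ij}D_iv)$, need not be represented by an $L^2(\Omega)$-function --- the products $a_{ij}D_iv$ are bounded but not weakly differentiable in general. For the same reason the weak conormal derivative (whose explicit expression involves $\trace a_{ij}$) need not exist in $L^2(\Gamma)$. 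Thus $C^\infty(\overline\Omega)$ does not produce a dense subspace of $D(\calL_\infty)$, and the Duhamel estimate you want is unavailable. The paper circumvents this by citing \cite[Lemma 4.6]{Nittka}, a regularity result that establishes density of $D(\calL_\infty)$ in $\calC$ \emph{despite} the rough coefficients, and then invokes the abstract result \cite[Corollary 3.3.11]{abhn} to pass from density of the part of the generator to strong continuity of the restricted semigroup. Any repair of your argument would have to replace the smooth test functions by a dense family that genuinely lies in $D(\calL_\infty)$, which is precisely what the Nittka lemma provides.
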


\begin{proof}
    We write $\calS_\infty(t)\coloneqq \calS(t)|_{\linftyog}$, which is well-defined by Theorem~\ref{t.hoelder}. 
    To prove that $\calS_\infty(t)$ is an adjoint operator, we use
    Lemma \ref{l.adjoint}. If $(\calf_n)_{n\in \N}$ is a bounded sequence in $\linftyog$ that converges pointwise to $\calf$, then
    $\calf_n \to \calf$ in $\ltwoog$ by dominated convergence. It follows that $\calS(t)\calf_n \to \calS(t)f$ in $\ltwoog$. 
    Passing to a subsequence we may (and shall) assume that $\calS_\infty(t)\calf_n\to \calS_\infty(t)\calf$ pointwise almost everywhere.  
    As $\calS_\infty(t)$ is a bounded operator on $\linftyog$, the sequence $\calS_\infty (t)\calf_n$ is uniformly bounded.
    Thus, if $\calg\in L^1(\Omega)\times L^1(\Gamma)$, it follows by dominated convergence that $\langle \calg, \calS_\infty(t)\calf_n\rangle \to 
    \langle \calg, \calS_\infty(t)\calf\rangle$. As this is true for every subsequence, it follows that $\calS_\infty(t)\calf_n\weakstar
    \calS_\infty(t)\calf$ as $n\to\infty$. By Lemma \ref{l.adjoint}, $\calS_\infty(t)$ is an adjoint operator.

    In order to prove that $\calS_\infty$ is an adjoint semigroup, it remains to show that $\calS_\infty(t)\calf \weakstar \calf$ for every $\calf\in \linftyog$ as $t \searrow 0$. To that end, we first note that since $c\in W^{1,\infty}(\Omega; \R^d)$, it follows from \cite[Proposition 4.5]{Nittka}, 
    that there exists $\alpha \geq 0$ such that $\|\calS_\infty(t)\| \leq e^{\alpha t}$ for all $t\geq 0$.
    By strong continuity on $\ltwoog$, for every $\calf\in \linftyog$ we have $\calS_\infty (t)\calf \to \calf$
    in $\ltwoog$. 
    Thus, given a sequence  $t_n\searrow 0$, we may assume, passing to a subsequence, that $\calS_\infty(t_n)\calf \to \calf$
    pointwise almost everywhere. Using dominated convergence, it follows that
    \[
    \langle \calg, \calS_\infty(t_n)\calf\rangle \to \langle \calg, \calf\rangle
    \]
    for every $\calg\in L^1(\Omega)\times L^1(\Gamma)$. As this is true for every subsequence, it follows that $\calS_\infty(t)\calf \weakstar \calf$ as $t\searrow 0$. This proves that $\calS_\infty$ is a weak$^*$-semigroup. Clearly the generator of $\calS_\infty$ is
    $-\calL_\infty$, the part of $-\calL$ in $\linftyog$.\medskip

    It remains to prove that $\calS_\infty$ is a strong Feller semigroup with respect to $\calC$. To that end, we first note that Theorem
    \ref{t.hoelder} and the analyticity of $\calS$ imply that $\calS_\infty$ maps $\linftyog$ to $\calC$. It follows from \cite[Lemma 4.6]{Nittka} that
    the domain of $-\calL_\infty$ is dense in $\calC$, whence \cite[Corollary 3.3.11]{abhn} implies that $\calS_\infty$ restricts 
    to a strongly continuous semigroup $\calS_\calC$ on $\calC$. This finishes the proof.
\end{proof}

We now turn to the semigroup $\calT$. In order to establish that also $\calT$ restricts to a strong Feller semigroup with respect
to $\calC$, we employ Theorem \ref{t.perturb} and make an additional assumption on $\calB$. 

\begin{hyp}
    \label{h.additional}
    We assume $c\in W^{1,\infty}(\Omega; \R^d)$ and that the operator $\calB$ maps $L^\infty(\Omega) \times L^\infty(\Gamma)$ to itself.
\end{hyp}

\begin{thm}
    \label{t.sgonc}
     Assume in addition to Hypothesis \ref{h.main} also Hypothesis \ref{h.additional}.
     Then $\calT_\infty\coloneqq \calT|_{\linftyog}$ is a strong Feller semigroup with respect to $\calC$. In particular,
     it restricts to a strongly continuous semigroup $\calT_\calC$ on $\calC$. The generator of $\calT_\calC$
     is $-\calA_\calC$, the part of $-\calA$ in $\calC$.
\end{thm}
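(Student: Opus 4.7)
The strategy is to realize $-\calA$ as a bounded additive perturbation of $-\calL$ by $\calB$ and then to invoke Theorem~\ref{t.perturb} in order to transfer the strong Feller property from $\calS$ to $\calT$. Comparing the forms $\fra = \frh - \langle \calB \cdot,\cdot\rangle_\calH$ on the common domain $D(\fra) = D(\frh)$, one reads off at the operator level that $\calA = \calL - \calB$ on $D(\calA) = D(\calL)$, whence $-\calA = -\calL + \calB$. By Proposition~\ref{p.Lstrongfeller} (applicable thanks to Hypothesis~\ref{h.additional}), $-\calL$ generates the weak$^*$-semigroup $\calS_\infty$ on $\linftyog$ and this semigroup is strong Feller with respect to $\calC$.

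Before invoking Theorem~\ref{t.perturb}, I would verify that the restriction $\calB|_{\linftyog}$, which is well-defined and bounded by Hypothesis~\ref{h.additional}, is itself an adjoint operator in the sense of Lemma~\ref{l.adjoint}. This follows from essentially the same pointwise/dominated-convergence argument used to treat $\calS_\infty(t)$ in the proof of Proposition~\ref{p.Lstrongfeller}, exploiting that $\calB$ is bounded both on $\ltwoog$ and on $\linftyog$. With this in hand, Theorem~\ref{t.perturb} applied to $-\calL_\infty + \calB|_{\linftyog}$ produces a weak$^*$-semigroup on $\linftyog$ that is strong Feller with respect to $\calC$. To identify this perturbed semigroup with $\calT|_{\linftyog}$, observe that for every $\calf \in \linftyog \subset \ltwoog$ both semigroups define strongly continuous $\ltwoog$-orbits whose generator is $-\calL + \calB = -\calA$, so they coincide by uniqueness of the semigroup generated by a closed operator.

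Having established that $\calT_\infty$ is a weak$^*$-semigroup on $\linftyog$ that is strong Feller with respect to $\calC$, the passage to a strongly continuous restriction $\calT_\calC$ follows exactly as in the conclusion of Proposition~\ref{p.Lstrongfeller}: $\calT_\infty(t)\linftyog \subset \calC$ for every $t>0$, and \cite[Corollary 3.3.11]{abhn} together with the density of $D(\calA_\infty)$ in $\calC$ yields that $\calT_\infty$ restricts to a strongly continuous semigroup on $\calC$. For the generator identification, the generator of $\calT_\infty$ is the part $-\calA_\infty$ of $-\calA$ in $\linftyog$; the generator of $\calT_\calC$ is by definition the part of $-\calA_\infty$ in $\calC$, which clearly agrees with the part $-\calA_\calC$ of $-\calA$ in $\calC$.

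The main technical obstacle I anticipate is verifying that the Dyson--Phillips-type perturbation series built from Theorem~\ref{t.perturb} preserves the strong Feller property despite $\calB$ \emph{not} being required to map $\calC$ into $\calC$. The key point is that each iterate $\calT_n(t)\calf = \int_0^t \calS(t-s)\calB \calT_{n-1}(s)\calf \dd s$ always ends with an application of $\calS(\cdot)$ with positive time argument, so the strong Feller property of $\calS$ takes each iterate into $\calC$ for $t>0$, and uniform convergence on compact time intervals carries this into the limit. This is precisely the content I would expect Theorem~\ref{t.perturb} to package cleanly, and all remaining verifications reduce to routine bookkeeping.
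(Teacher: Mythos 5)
Your proposal follows essentially the same route as the paper: decompose $-\calA = -\calL + \calB$, apply Proposition~\ref{p.Lstrongfeller} to get that $\calS_\infty$ is strong Feller with respect to $\calC$, verify $\calB|_{\linftyog}$ is an adjoint operator, and invoke Theorem~\ref{t.perturb}.

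The one step where your argument is imprecise is the identification of the perturbed weak$^*$-semigroup with $\calT|_{\linftyog}$. You write that ``both semigroups define strongly continuous $\ltwoog$-orbits whose generator is $-\calA$, so they coincide by uniqueness of the semigroup generated by a closed operator.'' But the semigroup produced by Theorem~\ref{t.perturb} lives a priori only on $\linftyog$, is only weak$^*$-continuous there, and is not an operator semigroup on $\ltwoog$ (its operators need not be $L^2$-bounded). Weak$^*$-continuity of an orbit in $\linftyog$ gives weak continuity in $\ltwoog$, but upgrading that to strong $L^2$-continuity via ``a weakly continuous semigroup is strongly continuous'' requires a semigroup defined on all of $\ltwoog$, which you do not have; and speaking of a ``generator'' for a single orbit is not meaningful. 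The paper sidesteps this by observing that $-\calL_\infty + \calB|_{\linftyog}$ is the part of $-\calA$ in $\linftyog$, so the resolvents of the two semigroups agree on $\linftyog$ for large $\lambda$, and then invoking uniqueness of Laplace transforms. Alternatively, one can compare the two orbits through the Duhamel integral equation \eqref{eq.duhamel} (both satisfy the same fixed-point equation with the same inhomogeneity $\calS(t)f$, and Gronwall gives uniqueness). Either of these would replace your informal step with a rigorous one; as written it is a genuine, though small and fillable, gap. The remainder of your argument, including the passage to $\calT_\calC$ and the generator identification, is correct and matches the paper.
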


\begin{proof}
    Once again, let $\calL$ denote the operator associated to the form $\frh$ and let $\calS$ be the semigroup generated by $-\calL$. By Proposition \ref{p.Lstrongfeller}, $\calS_\infty\coloneqq \calS|_{\linftyog}$
    is a strong Feller semigroup with respect to $\calC$. We denote its weak$^*$-generator by $-\calL_\infty$ as above.

    Similar arguments as in the proof of Proposition \ref{p.Lstrongfeller} show that $\calB|_{\linftyog}$ is an 
    adjoint operator. Thus, Theorem \ref{t.perturb} yields that $-\calL_{\infty}+\calB|_{\linftyog}$
    is the weak$^*$-generator of a strong Feller semigroup with respect to $\calC$. 
    Noting that $-\calL_{\infty}+\calB|_{\linftyog}$ is merely the part of $-\calA$ in $\linftyog$, 
    it follows from the uniqueness theorem for Laplace transforms, that
    the semigroup generated by $-\calL_{\infty}+\calB|_{\linftyog}$ must be the restriction of $\calT$ to $\linftyog$.
\end{proof}

\begin{rem}
    \label{r.similar}
    Define $V : C(\overline{\Omega}) \to \calC$ by $Vu= (u, \trace u)$. 
    Then $V$ is bijective with inverse $V^{-1}: (u, \trace u) \mapsto u$. Instead of the 
    semigroup $\calT_\calC$, it is often preferable to consider the \emph{similar} semigroup $T_C\coloneqq V^{-1}\calT_\calC V$ on $C(\overline{\Omega})$. It is again a 
    strongly continuous semigroup and its generator is $-A_C$ where 
    \begin{align*}
     D(A_C) & = \big\{ u \in \Dm (L)\cap C(\overline{\Omega}) \suchthat \conormal u \in L^2(\Gamma), 
     Lu - B_{11}u - B_{12} \trace u \in C(\overline{\Omega}),\\
     & \mbox{ and } \trace \big(Lu - B_{11}u - B_{12}\trace u\big) = \conormal u - B_{21}u - B_{22}\trace u\big\}
     \end{align*}
     and $A_Cu= Lu - B_{11}u - B_{12}\trace u$. 
     Thus, elements of $D(A_C)$ satisfy the Wentzell--Robin boundary condition. We note that if $\calB$ maps $C(\overline{\Omega})\times C(\Gamma)$
     to itself, then it follows that for $u\in D(A_C)$ we have $Lu\in C(\overline{\Omega})$ and $\conormal u \in C(\Gamma)$, 
     cf.\ \cite[Theorem 3.3]{ampr03}, and the boundary condition also holds in a pointwise sense. 
\end{rem}

\section{Spectral theory and asymptotic behavior}

In this section, we study the spectrum of $-\calA$, the generator
of the semigroup $\calT$ on $\calH = L^2(\Omega)\times L^2(\Gamma)$.
If Hypothesis \ref{h.additional} is satisfied, we may also
consider the semigroup $\calT_\calC$ (with generator $-\calA_\calC$) on the space $\calC$, 
or the semigroup $T_C$ (with generator $-A_C$) on the space $C(\overline{\Omega})$. We note
that $-A_C$ and $-\calA_\calC$ are similar, see Remark \ref{r.similar}, so that $\sigma(-\calA_\calC) = \sigma(-A_C)$.

We start with a general result. Recall that the \emph{spectral bound}
$\spb(A)$ is defined by
\[
\spb(A) = \sup\{\Re\lambda \suchthat \lambda \in \sigma(A)\}.
\]
Given a semigroup $T=(T(t))_{t\geq 0}$ the \emph{growth bound}
$\omega_0(T)$ is defined by
\[
\omega_0(T) = \inf\{ \omega\in \R \suchthat \exists M\geq 0 \mbox{ with }
\|T(t)\|\leq M e^{\omega t} \mbox{ for all } t\geq 0\}.
\]

\begin{prop}
\label{p.spectrum}
Assume Hypothesis \ref{h.main}.
\begin{enumerate}[\upshape (a)]
    \item $\sigma(-\calA)$ consists of only isolated eigenvalues which are poles of the resolvent and whose eigenspaces are finite dimensional. 
    \item  Assume additionally Hypothesis \ref{h.additional}. Then $\sigma(-\calA) = \sigma(-\calA_C)=\sigma(-A_C)$
    and all spectra only consist of isolated eigenvalues with finite-dimensional eigenspaces.
    \item It holds that $\spb(-\calA) = \omega_0(\calT)$. Furthermore, assuming additionally Hypothesis \ref{h.additional},
    we also have $\spb(-\calA_\calC)= \omega_0(\calT_\calC)$ and $\spb(-A_C)=\omega_0(T_C)$.
\end{enumerate}
\end{prop}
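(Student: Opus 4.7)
The plan is to exploit the compact resolvent of $\calA$ from Theorem \ref{t.generate}, together with the regularity output of Theorem \ref{t.hoelder}. For part (a), standard Riesz--Schauder theory applied to the compact resolvent yields immediately that $\sigma(-\calA)$ consists of isolated eigenvalues which are poles of the resolvent with finite-dimensional generalized eigenspaces.

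For part (b), the central step is to show that $\calT_\calC(t)$ is compact for every $t>0$. Since $\calT$ is analytic with compact resolvent, each $\calT(t)$ with $t>0$ is already compact on $\calH$. Moreover, analyticity combined with Theorem \ref{t.hoelder} gives $\calT(t)\calH \subset D(\calA^k) \subset \calC$ for sufficiently large $k$, and an application of the closed graph theorem (using the continuous inclusion $\calC \hookrightarrow \calH$) shows that $\calT(t)\colon \calH \to \calC$ is bounded. Factoring $\calT(t)=\calT(t/2)\calT(t/2)$, where the right factor is compact $\calH\to\calH$ and the left factor is bounded $\calH\to\calC$, gives compactness of $\calT(t)\colon\calH\to\calC$, and restriction to the closed subspace $\calC$ yields compactness of $\calT_\calC(t)$ on $\calC$. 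Hence $-\calA_\calC$ has compact resolvent and enjoys the same spectral structure as in (a). To identify the spectra, I would observe that any $\calC$-eigenfunction is automatically an $\calH$-eigenfunction, giving $\sigma(-\calA_\calC)\subset\sigma(-\calA)$; conversely, if $-\calA\calu=\lambda\calu$ with $\calu\neq 0$, then $\calu=e^{\lambda t}\calT(t)\calu\in \calT(t)\calH\subset\calC$ for any $t>0$, and since $\calA\calu=-\lambda\calu\in\calC$ as well, $\calu\in D(\calA_\calC)$ and $\lambda\in\sigma(-\calA_\calC)$. The coincidence $\sigma(-\calA_\calC)=\sigma(-A_C)$ then follows from the similarity noted in Remark \ref{r.similar}.

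For part (c), the identity $\spb=\omega_0$ is a standard consequence of eventual norm-continuity of a strongly continuous semigroup. Analyticity of $\calT$ delivers this for $(-\calA,\calT)$, and the immediate compactness of $\calT_\calC$ established in (b) (which forces norm-continuity for $t>0$) delivers it for $(-\calA_\calC,\calT_\calC)$; the case of $(-A_C,T_C)$ follows by similarity. The main obstacle I anticipate is the compactness transfer of $\calT$ from $\calH$ to $\calC$ in part (b): the two spaces carry substantially different topologies ($L^2$ versus sup-norm), so the jump from compactness on $\calH$ to compactness on $\calC$ requires combining compactness on $\calH$ with the smoothing provided by analyticity and by the higher-order regularity of Theorem \ref{t.hoelder}, rather than following from a purely abstract restriction argument.
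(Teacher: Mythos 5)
Your proposal is essentially correct and follows the same overall skeleton as the paper's proof: compact resolvent plus Riesz--Schauder theory for (a), compactness of $\calT_\calC$ plus consistency of spectra for (b), and immediate norm continuity for (c). The two places where you genuinely diverge from the paper are both in part (b), and both of your alternatives are sound. First, for compactness of $\calT_\calC(t)$: the paper observes that $\calT_\calC(t)$ maps into the subspace $\calC^\alpha = \{(u,\trace u) : u\in C^\alpha(\overline{\Omega})\}$ and invokes the compactness of the embedding $C^\alpha(\overline{\Omega})\hookrightarrow C(\overline{\Omega})$ (Arzel\`a--Ascoli), whereas you factor $\calT(t)=\calT(t/2)\calT(t/2)$ with the right factor compact on $\calH$ and the left factor bounded $\calH\to\calC$ (via the closed graph theorem and Theorem~\ref{t.hoelder}), then pre-compose with the bounded inclusion $\calC\hookrightarrow\calH$. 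Both routes are valid; the paper's argument leans on the H\"older regularity more heavily, while yours is closer to an abstract smoothing argument. Second, for the equality $\sigma(-\calA)=\sigma(-\calA_\calC)$: the paper cites the consistency result for compact semigroups \cite[Proposition 2.6]{arendt94}, whereas you give a direct eigenvector argument exploiting that both spectra consist of eigenvalues and that $\calT(t)\calH\subset\calC$ for $t>0$. Your argument is self-contained and slightly more elementary; note only the small sign slip $\calu=e^{-\lambda t}\calT(t)\calu$ rather than $\calu=e^{\lambda t}\calT(t)\calu$, which does not affect the conclusion that $\calu\in\mathrm{rg}\,\calT(t)\subset\calC$. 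Finally, the ``main obstacle'' you flag at the end is not actually an obstacle: your factorization argument already handles the transfer of compactness between the two topologies correctly, so the caution there is unwarranted.
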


\begin{proof}
    As $-\calA$ has compact resolvent by Theorem \ref{t.generate}, part (a) follows immediately from \cite[Theorem III.6.29]{Kato76}.
    Now additionally assume Hypothesis \ref{h.additional} is satisfied.
    We note that the semigroup $\calT_\calC$ is also compact, as it maps (by analyticity of $\calT$ and Theorem \ref{t.hoelder})
    into the space $\calC^\alpha \coloneqq \{ (u, \trace u) \suchthat u\in C^{\alpha}(\overline{\Omega})\}$, which is easily
    seen to be a compact subset of $\calC$. Compactness of $T_C$ now follows from similarity. It also follows that
    $\sigma (-A_C)$ and $\sigma (-\calA_\calC)$ consist of isolated eigenvalues only with finite-dimensional eigenspaces.

    As the semigroups $\calT$ and $\calT_\calC$ are consistent and compact, \cite[Proposition 2.6]{arendt94} yields
    $\sigma(-\calA) = \sigma(-\calA_\calC)$. That $\sigma(-\calA_\calC) = \sigma(-A_C)$ follows by similarity.  At this point, (b) is proved.

    As for part (c), we note that since all the semigroups are compact, they are immediately norm continuous and hence the equality of growth and spectral bounds 
    follows from \cite[Corollary IV.3.11]{en00}.
\end{proof}

In the rest of this section, we take a closer look at the spectral bound $\spb(-\calA)$. We are particularly interested in 
the question whether $\spb(-\calA)\in \sigma(-\calA)$ and, if this is the case, in additional information about this spectral value. 
We briefly recall the relevant terminology. Given a closed operator $A$, we say that $\spb(A)$ is a \emph{dominant eigenvalue}, if
$\spb(A)$ is an eigenvalue of $A$ (thus, in particular, $\spb(A) \in \sigma(A)$) and $\Re\lambda < \spb(A)$ for all $\lambda \in \sigma(A)\setminus \{\lambda\}$. Note that if $\spb(A)$ is an eigenvalue of $A$, then $\spb(A)$ is dominant if and only if $\sigma(A) \cap (\spb(A) + i\R) = \{\spb(A)\}\subset \R$.

We call an eigenvalue $\lambda_0$ of an operator $A$ \emph{algebraically simple} if it is an isolated point of the spectrum 
and the associated spectral projection $P$, defined by
\begin{equation*}
    \label{eq.spectralprojection}
    P \coloneqq \frac{1}{2\pi i}\int_{|\lambda-\lambda_0| = \eps} R(\lambda, A)\, d\lambda,
\end{equation*}
where $\eps>0$ is chosen small enough so that $\bar{B}(\lambda_0, \eps)\cap\sigma(A) = \{\lambda_0\}$, has rank 1. We note that
if $\lambda_0$ is algebraically simple, then $\lambda_0$ is a first order pole of the resolvent and the eigenspace $\ker (A-\lambda_0)$ is 
one-dimensional. Moreover, the \emph{generalized eigenspace} $\bigcup_{n\in \N} \ker (A-\lambda_0)^n$ is also one-dimensional. It is well-known that if $\lambda_0$ is a pole of the resolvent (of any order), then $\lambda_0$ is algebraically simple if and only if the generalized eigenspace is one-dimensional. Moreover, if $\lambda_0$ is an algebraically simple eigenvalue, it is a \emph{geometrically simple} eigenvalue,
i.e.\ $\ker (A-\lambda_0)$ is one-dimensional. Conversely, if $\lambda_0$ is geometrically simple, then $\lambda_0$ is algebraically simple 
if and only if $\lambda_0$ is a pole of first order of the resolvent, which in turn is equivalent to the property $\ker(A-\lambda_0) = \ker((A-\lambda_0)^2)$.
For more information, we refer to \cite[Section III.6]{Kato76}, or~\cite{CampbellDaners}.

\subsection{The case of positive semigroups}
\label{subsec.positive-semigroup}

Throughout this section, we assume that Hypothesis \ref{h.additional} is satisfied and, moreover, that the semigroup
$\calT$ (and thus, by consistency, also $\calT_{\calC}$ and $T_C$) are positive. The latter is characterized by 
Theorem \ref{t.characterization}(a). Our primary goal is to describe the asymptotic behavior of these semigroups. 

Before we state and prove the main theorem  of this section, we recall a result about the strict monotonicity of the spectral 
bound, that we will use in the proof.

\begin{thm}
\label{t.monotone}
    Assume that $\calS_1, \calS_2$ are strongly continuous semigroups on a Banach lattice $E$ with generators $-\calA_1$ and $-\calA_2$,
    respectively. Assume that
    \begin{enumerate}[\upshape (i)]
        \item $0\leq \calS_1(t)\leq \calS_2(t)$ for all $t\geq 0$,
        \item $\calA_2$ has compact resolvent,
        \item $\calS_2$ is irreducible.
    \end{enumerate}
    Then, if $\calA_1\neq \calA_2$, we have $\spb(-\calA_1)< \spb(-\calA_2)$.
\end{thm}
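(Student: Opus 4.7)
The plan is to prove the contrapositive: if $\spb(-\calA_1) = \spb(-\calA_2) =: s$, then $\calA_1 = \calA_2$. Positivity and hypothesis (i) combined with the Laplace-transform representation $R(\lambda, -\calA_i)f = \int_0^\infty e^{-\lambda t}\calS_i(t)f\,dt$ (valid for real $\lambda > \omega_0(\calS_i)$) yield $0 \le R(\lambda, -\calA_1) \le R(\lambda, -\calA_2)$ for every $\lambda > s$, so the inequality $\spb(-\calA_1) \le s$ is automatic; the content of the theorem is the equality case. Under (ii) and (iii) I would invoke the Perron--Frobenius/Krein--Rutman theorem for positive $C_0$-semigroups with compact resolvent (cf.\ \cite[Section C-III.3]{schreibmaschiene}) to conclude that $s$ is a dominant, algebraically simple eigenvalue of $-\calA_2$, admitting a quasi-interior eigenvector $v \in E_+$ and a strictly positive dual eigenfunctional $\phi \in E^*_+$; in particular $\calS_2(t)^*\phi = e^{st}\phi$ for all $t \ge 0$.

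Now suppose $\spb(-\calA_1) = s$ and fix $\lambda_0 > s$. The heart of the argument is Schaefer's monotonicity theorem for positive operators (see, e.g., \cite[V.5.2]{schreibmaschiene}): \emph{if $0 \le A \le B$ are positive operators on a Banach lattice, $B$ is compact and irreducible, and $r(A) = r(B)$, then $A = B$.} I would apply this to $A = R(\lambda_0, -\calA_1)$ and $B = R(\lambda_0, -\calA_2)$. Here $B$ is compact by (ii), it inherits irreducibility from $\calS_2$ via the Laplace-transform formula (any closed ideal invariant under $B$ is invariant under every $\calS_2(t)$, using the exponential/Post--Widder formula in the reverse direction), and $r(B) = (\lambda_0 - s)^{-1}$ by the spectral mapping theorem for resolvents. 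Once one has $r(A) = r(B)$, the theorem forces $R(\lambda_0, -\calA_1) = R(\lambda_0, -\calA_2)$, whence $\calA_1 = \calA_2$, contradicting the hypothesis.

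The main obstacle will be verifying $r(R(\lambda_0, -\calA_1)) = (\lambda_0 - s)^{-1}$, which amounts to checking that the spectral bound $s = \spb(-\calA_1)$ is actually attained as a spectral value of $-\calA_1$. In settings with an order-continuous norm (such as $L^p$ for $1 \le p < \infty$, covering all applications in this paper) the Dodds--Fremlin theorem renders $R(\lambda_0, -\calA_1)$ compact as a positive operator dominated by a compact one, so its spectrum consists of isolated eigenvalues and $\spb(-\calA_1)$ is automatically attained. A cleaner duality-based alternative bypasses this technicality entirely: from dualising (i) one obtains $\calS_1(t)^*\phi \le e^{st}\phi$, and a residue analysis at $\lambda = s$, using that $\phi$ is a first-order pole eigenfunctional of $R(\cdot, -\calA_2)^*$, yields $R(\lambda_0, -\calA_1)^*\phi = (\lambda_0 - s)^{-1}\phi$; pairing the positive operator $R(\lambda_0, -\calA_2) - R(\lambda_0, -\calA_1) \ge 0$ with the strictly positive $\phi$ then forces the resolvent difference to vanish, completing the argument.
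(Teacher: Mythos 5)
The paper does not prove this statement at all: its ``proof'' is the one-line citation ``This is a version of \cite[Theorem 1.3]{ab92}.'' Your reconstruction follows essentially the strategy behind that reference --- extract the Perron--Frobenius data of the dominating semigroup, pass to the resolvents, and invoke a Schaefer/de~Pagter/Marek-type theorem on dominated irreducible compact positive operators with equal spectral radius --- so in spirit you are re-proving the cited result rather than taking a genuinely different route.

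That said, two of the steps need repair. First, in the main route, to obtain $r\bigl(R(\lambda_0,-\calA_1)\bigr)=(\lambda_0-s)^{-1}$ you must know that $s\in\sigma(-\calA_1)$, not merely that $\spb(-\calA_1)=s$. You invoke Dodds--Fremlin, but the version of that theorem that yields compactness of a dominated operator requires order continuity of $E^*$ as well as of $E$, and so already fails on $L^1$; and, more importantly, the theorem as stated is on an arbitrary Banach lattice. The clean way to close the gap is the general fact that for a positive $C_0$-semigroup on any Banach lattice with nonempty spectrum the spectral bound is itself a spectral value, \cite[C-III, Theorem~1.1]{schreibmaschiene}; combined with de~Pagter's theorem applied to $R(\lambda_0,-\calA_2)$ (which gives $\sigma(-\calA_2)\ne\emptyset$, so that $s>-\infty$), and the trivial disposal of the case $\sigma(-\calA_1)=\emptyset$, this does the job without any order-continuity hypothesis. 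Second, the ``cleaner duality-based alternative'' has a real gap: from $\calS_1(t)^*\phi\le e^{st}\phi$ the Laplace transform gives only $R(\lambda_0,-\calA_1)^*\phi\le(\lambda_0-s)^{-1}\phi$. Upgrading this to equality by ``residue analysis'' is not justified --- at this stage you do not yet know that $\lambda=s$ is a pole of $R(\cdot,-\calA_1)$, nor that $\phi$ is an eigenfunctional of $-\calA_1^*$, so there is nothing to take a residue of; this route either collapses back to the main one or needs a new ingredient. A minor bibliographic slip: the dominated-operator theorem you quote is Schaefer's, from \emph{Banach Lattices and Positive Operators} (Proposition~V.5.2 there), not from \cite{schreibmaschiene}.
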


\begin{proof}
    This is a version of \cite[Theorem 1.3]{ab92}.
\end{proof}

We can now characterize the asymptotic behavior of the semigroup $T_C$ on $C(\overline{\Omega})$. Using the results of 
Proposition \ref{p.spectrum}, it is straightforward to see that similar results also apply to the semigroups $\calT$ and $\calT_\calC$.

\begin{thm}
    \label{t.positiveasymptotic}
    Assume Hypotheses \ref{h.main} and \ref{h.additional}, that $\Omega$ is connected and that the conditions of 
    Theorem \ref{t.characterization}{\rm (a)}
    are satisfied so that the semigroups $\calT$, $\calT_\calC$ and $T_C$ are positive.
    \begin{enumerate}[\upshape (a)]
        \item If the conditions of Theorem \ref{t.characterization}{\rm (c)} are satisfied, then $\spb(-A_C)=0$ and there exists a strictly positive
        measure $\rho$ on $\overline{\Omega}$ and constants $M,\omega>0$ such that
        \[
        \| T_C(t) - \langle \cdot, \rho\rangle \one \|_{\calL(C(\overline{\Omega}))} \leq M e^{-\omega t}\qquad (t\geq 0).
        \]
        \item If the conditions of Theorem \ref{t.characterization}{\upshape(b)} are satisfied but those of Theorem \ref{t.characterization}{\upshape(c)} 
        are not satisfied, then $\spb(-A_C)<0$ and there exist constants $M,\omega>0$ such that
        \[
        \| T_C(t)\|_{\calL(C(\overline{\Omega}))} \leq M e^{-\omega t}\qquad (t\geq 0).
        \]
        \item If either $\Div c + B_{11}\one_\Omega +B_{12}\one_\Gamma\not\leq 0$ or
        $B_{21}\one_\Omega + B_{22}\one_\Gamma \not\leq  c\cdot \nu$, then $\spb(-A_C)>0$ and there exist constants $M, \omega >0$ such that
        \[
        \|T_C(t)\|_{\calL(C(\overline{\Omega}))} \geq Me^{\omega t}\qquad (t\geq 0).
        \]
        \end{enumerate}
\end{thm}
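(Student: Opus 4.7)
The plan is to exploit the Perron--Frobenius/Krein--Rutman theory for positive, irreducible $C_0$-semigroups with compact resolvent. By Proposition \ref{p.spectrum}, the three generators $-\calA$, $-\calA_\calC$ and $-A_C$ share the same spectrum and spectral bound equals growth bound, so all three exponential estimates reduce to locating $s \coloneqq \spb(-A_C)$. Irreducibility of $\calT$ (and hence of $T_C$ via consistency and similarity) follows from Proposition \ref{p.irreducible}, so Krein--Rutman delivers: $s$ is an algebraically simple, dominant eigenvalue of $-A_C$ with a strictly positive eigenfunction $v \gg 0$, and the adjoint possesses a strictly positive eigenmeasure $\rho$ with the same eigenvalue. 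The central identity is that for $u\in D(A_C)$,
\[
\langle -A_C u, \rho\rangle = s\langle u, \rho\rangle.
\]
Applying this with $u = \one$ --- which lies in $D(A_C)$ since $\one_\Omega\in \Dm(L)$ with $L\one_\Omega = -\Div c$ and $\conormal \one_\Omega = c\cdot\nu$ (using $c\in W^{1,\infty}$ from Hypothesis \ref{h.additional}) --- Proposition \ref{p.domain} yields $-A_C\one = \alpha$ with $\alpha_1 = \Div c + B_{11}\one_\Omega + B_{12}\one_\Gamma$ and $\alpha_2 = B_{21}\one_\Omega + B_{22}\one_\Gamma - c\cdot\nu$. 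Thus the sign of $s$ matches the sign of $\int_\Omega \alpha_1 \rho_1\dd\lambda + \int_\Gamma \alpha_2 \rho_2 \dd\sigma$.

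For part (a), the equalities in Theorem \ref{t.characterization}(c) force $\alpha\equiv 0$, so $\one\in\ker(-A_C)$ and $0\in\sigma(-A_C)$. Simultaneously sub-Markovianity gives $\|T_C(t)\|\leq 1$ and hence $s \leq 0$, so $s = 0$. Algebraic simplicity and dominance of $0$ yield a rank-one spectral projection $P$; since $\one$ spans $\range P$, one writes $Pu = \langle u, \rho\rangle\one$ after normalizing $\langle \one,\rho\rangle = 1$. A spectral gap below $0$ (Proposition \ref{p.spectrum}(a)) together with compactness of $T_C$ then produces the exponential convergence $\|T_C(t) - P\|\leq Me^{-\omega t}$ via the Riesz decomposition of $T_C$ into $P$ and its complement.

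For part (b), I adopt the rank-one modification $\tilde\calB$ from the sufficiency argument in the proof of Theorem \ref{t.characterization}(b), obtaining $\tilde B_{12}\geq B_{12}$, $\tilde B_{21}\geq B_{21}$, an associated form with $\tilde\fra\leq \fra$ on positive pairs, and thus (by Ouhabaz domination) a Markovian semigroup satisfying $0 \leq \calT \leq \tilde\calT$. Since the hypotheses of Theorem \ref{t.characterization}(c) fail, $\tilde\calB\neq \calB$ and hence $\tilde\calA\neq \calA$. The dominating $\tilde\calT$ is irreducible (Proposition \ref{p.irreducible}) and has compact resolvent (Theorem \ref{t.generate}), so Theorem \ref{t.monotone} applied with $\calS_1 = \calT$, $\calS_2 = \tilde\calT$ yields $s < \spb(-\tilde\calA) = 0$ by part (a), and the estimate follows from $s = \omega_0(T_C)$.

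For part (c), the adjoint identity reduces the task to showing $\int\alpha_1\rho_1 + \int\alpha_2\rho_2 > 0$. Read in its strongest form --- that some $\alpha_i$ is non-negative and not identically zero --- strict positivity of $\rho$ makes this immediate, giving $s > 0$; the exponential lower bound $\|T_C(t)\|\geq Me^{\omega t}$ then follows from $s = \omega_0(T_C)$ combined with $T_C(t)v = e^{st}v$ and the comparability $\delta\one \leq v\leq C\one$ of the strictly positive Perron eigenfunction. The main obstacle here is the possibility that $\alpha$ has mixed sign, in which case cancellation in $\langle \alpha,\rho\rangle$ must be controlled; I would address this dually to part (b) by constructing a smaller Markovian subsemigroup $\tilde\calT \leq \calT$ via a pointwise decrement of $B_{11}$ (or $B_{22}$) that respects the positive minimum principle, and then applying Theorem \ref{t.monotone} in the reverse direction (with $\calS_1 = \tilde\calT$, $\calS_2 = \calT$) to conclude $0 = \spb(-\tilde\calA) < s$.
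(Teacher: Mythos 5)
Your parts (a) and (b) are essentially the paper's argument: Markovianity forces $\spb(-A_C)=0$, irreducibility and compactness give the Perron--Frobenius picture and the exponential rate; and for (b) the Markovian majorant $\tilde\calT$ from the proof of Theorem~\ref{t.characterization}(b) plus Theorem~\ref{t.monotone} yield $\spb(-A_C)<0$.

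Part (c) contains a genuine gap. The adjoint identity gives $s\langle\one,\rho\rangle=\langle\alpha,\rho\rangle$ with $\alpha=-A_C\one$, but the hypothesis is only that $\alpha_1\not\leq 0$ or $\alpha_2\not\leq 0$; it does \emph{not} say $\alpha\ge 0$, so $\langle\alpha,\rho\rangle>0$ does not follow from $\rho\gg 0$. You flag this yourself, but the proposed fix fails for the same reason: to build a Markovian minorant by subtracting a multiplication operator from $B_{11}$ (resp.\ $B_{22}$), you would need to subtract by $\alpha_1$ (resp.\ $\alpha_2$) to achieve $\tilde\alpha=0$; but positivity of what is subtracted is what guarantees both that $\tilde B_{11},\tilde B_{22}$ retain the positive-minimum-principle structure and that $\tilde\calT\leq\calT$, and precisely in the mixed-sign case no such nonnegative decrements exist. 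The paper sidesteps the difficulty by comparing with the \emph{unperturbed} semigroup $\calS$, that is, the one with $\calB=0$: since $B_{12},B_{21}\ge 0$ and $B_{11},B_{22}$ satisfy the positive minimum principle, Ouhabaz domination gives $0\leq\calS\leq\calT$; $\calS$ is Markovian, so $\spb(-\calL)=0$ by part (a); and $\calA\one\neq 0$ (whereas $\calL\one=0$) gives $\calA\neq\calL$, so Theorem~\ref{t.monotone} yields $\spb(-\calA)>0$. The exponential lower bound then comes from the strictly positive Perron eigenvector exactly as you say. So your instinct to use Theorem~\ref{t.monotone} ``in reverse'' is right, but the comparison object must be $\calS$ (drop $\calB$ entirely) rather than a pointwise modification of $\calB$, since the latter cannot control the cancellation inside $\alpha$.
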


\begin{proof}
    (a) Since $T_C$ is Markovian it is $\omega_0(T_C)=0$, and thus
    $\spb(-A_C)=\spb(-\calA)=0$ by Proposition \ref{p.spectrum}. 
    Since $\calT$ is irreducible by Proposition \ref{p.irreducible}, it follows
    from \cite[Proposition C-III.3.5]{schreibmaschiene} that $0$ is a first order pole of the resolvent of $\calA$ and
    the corresponding eigenspace is one-dimensional, hence spanned by $\one$. By \cite[Proposition A.5]{AKK18}
    also $T_C$ is irreducible and the same results follow for 
    $A_C$. The result about asymptotic behavior now   
    follows from \cite[Theorem C-IV.2.1]{schreibmaschiene}, see also \cite[Theorem A.2]{AKK18}.\medskip

    (b) Under the assumptions of the theorem, the proof of Theorem \ref{t.characterization} yields a Markovian semigroup $\tilde{\calT}$
    such that $0\leq \calT(t)\leq \tilde{\calT}(t)$ for all $t\geq 0$. By part (a), for the generator $-\tilde{A}_C$ of $\tilde{T}_C$, 
    we have $\spb(-\tilde{A}_C) = 0$. 
    As the conditions of Theorem \ref{t.characterization}(c)
    are not satisfied, the generator $-A_C$ of $T_C$ is different from $-\tilde{A}_C$ and Theorem \ref{t.monotone} yields
    $\spb(-A_C)< 0$. Since the growth bound and spectral bound of $T_C$ coincide, the claim follows.\medskip 

    (c) Again, denote by $\frh$ the form $\fra$ with $\calB\equiv 0$ and by $\calL$ and $\calS$ the associated operator and semigroup. 
    It follows from Theorem \ref{t.characterization} with $\calB\equiv 0$
    that $\calS$ is Markovian, whence $\spb(-\calL) = 0$ by part (a) and Proposition \ref{p.spectrum}.
    Using that $B_{11}$ and $B_{22}$ satisfy the positive minimum principle and that $B_{12}, B_{21}$ are positive, it follows that
    $\frh [\calu, \calv] \leq \fra[\calu, \calv]$ for all $0\leq \calu, \calv \in D(\fra) = D(\frh)$. 
    Thus, using the Ouhabaz criterion
    for domination \cite[Theorem 2.2.4]{Ouhabaz}, it follows that  $0\leq \calS\leq \calT$. By Proposition \ref{p.irreducible} $\calT$
    is irreducible and by Theorem \ref{t.generate} it is compact. However, by our assumption, $\calA\one\neq 0$, so that
    $\calL\neq\calA$. Thus, Theorem \ref{t.monotone} implies $0=\spb(-\calL) < \spb(-\calA) =\colon \omega$. As $\calT$ is irreducible,
    we find a strictly positive function $\calu$ such that $\calT(t)\calu = e^{\omega t}\calu$ for all $t\ge 0$, see \cite[Proposition C-III.3.5]{schreibmaschiene}.
    The claim follows.    
\end{proof}

\subsection{Perturbations of dissipative, self-adjoint operators}

In this section, we make additional assumptions on the coefficients $b$ and $c$ appearing in the form $\frq$. 

\begin{hyp}
    \label{h.sym}
    Assume that $b=c \in W^{1,\infty}(\Omega; \R^d)$ satisfy $\Div b = \Div c = 0$ and  $b \cdot \nu =c \cdot \nu=0$.
\end{hyp}

As before, $\calL$ denotes the operator associated to the form $\frh$ (i.e. $\fra$ with $\calB=0$), and $\calS$ the semigroup generated by $-\calL$.

\begin{lem}
    \label{l.diss}
    Assume that Hypothesis \ref{h.main} is satisfied and $b,c \in W^{1,\infty}(\Omega; \R^d)$. Then the following are equivalent:
    \begin{enumerate}[\upshape (i)]
        \item $-\calL$ is self-adjoint and $\calS$ is Markovian.
        \item Hypothesis \ref{h.sym} is satisfied.
    \end{enumerate}
    In that case $-\calL$ is dissipative with $\spb(-\calL) = 0$.
\end{lem}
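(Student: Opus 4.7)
The plan is to decouple the two requirements in (i). By classical form theory (see e.g.\ \cite[Section~1.4]{Ouhabaz}), $-\calL$ is self-adjoint if and only if the form $\frh$ is symmetric, and since $\frh[\calu,\calv]=\frq[u_1,v_1]$, this in turn is equivalent to symmetry of $\frq$ on $H^1(\Omega)$. Markovianity of $\calS$ is already characterized in Theorem~\ref{t.characterization}(c) applied with $\calB=0$, so the task reduces to identifying the symmetry condition and combining it with that characterization.

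For the symmetry analysis, using that $A$ is symmetric and the coefficients $a_{ij}, b_j, c_j$ are real, a direct computation gives
\[
\frq[u,v] - \overline{\frq[v,u]} = \sum_{j=1}^d \int_\Omega (b_j-c_j)\bigl[(D_j u)\bar v - u\,\overline{D_j v}\bigr]\dd\lambda,
\]
so $\frq$ is symmetric iff the right-hand side vanishes for all $u,v\in H^1(\Omega)$. Splitting into real and imaginary parts, it suffices to test with real $u,v\in C_c^\infty(\Omega)$; integration by parts (permissible since $b,c\in W^{1,\infty}$) rewrites the vanishing condition as
\[
\int_\Omega u\bigl\{v\,\Div(b-c) + 2(b-c)\cdot\nabla v\bigr\}\dd\lambda = 0 \qquad \text{for all } u,v\in C_c^\infty(\Omega).
\]
Varying $u$ first forces the bracketed expression to vanish a.e.\ for each fixed $v$. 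Choosing $v\equiv 1$ on a ball $B\subset\subset\Omega$ yields $\Div(b-c)=0$ on $B$, and varying $B$ gives $\Div(b-c)=0$ on $\Omega$; then $(b-c)\cdot\nabla v=0$ for every $v\in C_c^\infty(\Omega)$, whence $b=c$ on $\Omega$. The converse is immediate. Combined with Theorem~\ref{t.characterization}(c), which under $\calB=0$ yields Markovianity iff $\Div c=0$ and $c\cdot\nu=0$, this establishes (i) $\Leftrightarrow$ (ii).

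For the last claim, assume (ii). Then $b=c$ allows the first-order part of $\frq$ to be written as $\sum_j b_j D_j(u\bar v) = b\cdot\nabla(u\bar v)$; integration by parts gives
\[
\int_\Omega b\cdot\nabla(u\bar v)\dd\lambda = \int_\Gamma (b\cdot\nu)\,u\bar v\dd\sigma - \int_\Omega (\Div b)\,u\bar v\dd\lambda = 0,
\]
using $b\cdot\nu=0$ and $\Div b=0$. Hence $\frq[u,u]=\sum_{i,j=1}^d\int_\Omega a_{ij} D_iu\,\overline{D_ju}\dd\lambda\geq \eta\|\nabla u\|_{\Omega,2}^2\geq 0$ by uniform ellipticity, so $\Re\langle \calL\calu,\calu\rangle_\calH=\frh[\calu,\calu]\geq 0$ for every $\calu\in D(\calL)$, i.e., $-\calL$ is dissipative. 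Markovianity of $\calS$ gives $\one\in\ker\calL$, so $0\in\spec(-\calL)$ and $\spb(-\calL)\geq 0$, while dissipativity implies $\|\calS(t)\|\leq 1$ hence $\omega_0(\calS)\leq 0$; Proposition~\ref{p.spectrum}(c) then yields $\spb(-\calL)=\omega_0(\calS)=0$. The main obstacle I anticipate is the symmetry step --- specifically, the localization argument that simultaneously forces $\Div(b-c)=0$ and $b=c$; everything else follows by assembling results already established earlier in the paper.
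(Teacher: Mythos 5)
Your proof is correct and follows essentially the same structure as the paper's: self-adjointness is reduced to symmetry of $\frq$, Markovianity is handled by Theorem~\ref{t.characterization}(c) with $\calB=0$, dissipativity is obtained by the same integration-by-parts computation showing the first-order terms drop out, and $\spb(-\calL)=0$ follows from $\calL\one=0$. The one place you go beyond the paper is the symmetry step: the paper merely asserts that symmetry of $\frq$ is equivalent to $b=c$, while you give a complete localization argument (first forcing $\Div(b-c)=0$, then $(b-c)\cdot\nabla v=0$) that correctly fills in this detail. Two minor observations: the remark about "splitting into real and imaginary parts" is unnecessary for the direction you actually use (you are only restricting the test class, not extending it), and for the final spectral bound you route through $\omega_0(\calS)\leq 0$ and Proposition~\ref{p.spectrum}(c), whereas dissipativity of $-\calL$ already gives $\spb(-\calL)\leq 0$ directly, so the detour via the growth bound can be shortcut.
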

\begin{proof}
    $\calL$ is self-adjoint if and only if $\frq$ is symmetric, which in turn is equivalent to $b=c$. It follows from Theorem \ref{t.characterization}(c), that $\calS$ is Markovian if and only if $\Div c=0$ and  $c \cdot \nu=0$. This shows the equivalence of (i) and (ii).
    
    Moreover, for
    $\calu \in D(\fra)$, Hypothesis \ref{h.sym} enforces
    \begin{align*}
        \frh[\calu, \calu] & =\frq[u_1,u_1]= \int_\Omega \sum_{i,j=1}^d a_{ij}D_i u_1 \overline{D_j u_1} + \sum_{j=1}^d b_j (D_j u_1)\overline{u_1} 
        + \sum_{j=1}^d b_j u_1 \overline{D_j u_1}\dd \lambda\\
        & = \int_\Omega \sum_{i,j=1}^d a_{ij}D_i u_1 \overline{D_j u_1}  + \int_\Omega \sum_{j=1}^d b_j D_j |u_1|^2\dd \lambda \\ 
        &= \int_\Omega \sum_{i,j=1}^d a_{ij}D_i u_1 \overline{D_j u_1}- \int_\Omega  \Div b |u_1|^2 \dd \lambda +\int_\Gamma \nu \cdot b |u_1|^2 \dd \sigma \\
        &= \int_\Omega \sum_{i,j=1}^d a_{ij}D_i u_1 \overline{D_j u_1}\geq \eta \int_\Omega |\nabla u_1|^2 \dd \lambda \geq 0.
    \end{align*}

    Thus $\frh$ is accretive, or equivalently $-\calL$ is dissipative. In particular, $\spb(-\calL) \leq 0$. Equality is ensured by $\calL \one=0$.
\end{proof}

\begin{thm}
    \label{t.perturbspectral}
    Assume that Hypotheses \ref{h.main} and \ref{h.sym} are satisfied, and that the operator $\calB$ is dissipative. Then $\spb(-\calA) \leq 0$ and $\sigma(-\calA)\cap i\R \subset \{0\}$.
    Moreover, if $0 \in \sigma(-\calA)$, then $\ker (-\calA) = \linSpan(\one)$. In this case, $\spb(-\calA) = 0$, and $0$ is a dominant and algebraically simple eigenvalue.
\end{thm}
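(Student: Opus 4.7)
The plan is to combine the accretivity of $\frh$ established in Lemma~\ref{l.diss} with the dissipativity of $\calB$, and to exploit the rigid structure forced on any eigenvector of $\calA$ whose eigenvalue lies on the imaginary axis. Writing
\[
\Re\fra[\calu,\calu] = \frh[\calu,\calu] - \Re\langle\calB\calu,\calu\rangle_\calH \geq 0
\]
for all $\calu \in D(\fra)$, both summands are non-negative by Lemma~\ref{l.diss} and dissipativity of $\calB$, so $\fra$ is accretive, $-\calA$ is dissipative, and $\spb(-\calA) \leq 0$.

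For the imaginary spectrum, recall that $-\calA$ has compact resolvent by Theorem~\ref{t.generate}, hence any $i\beta \in \sigma(-\calA)$ is an eigenvalue with some eigenvector $\calu \neq 0$. Then $\fra[\calu,\calu] = -i\beta \|\calu\|_\calH^2$ has zero real part, forcing both of the non-negative pieces above to vanish. The coercivity estimate $\frh[\calu,\calu] \geq \eta\int_\Omega |\nabla u_1|^2\dd\lambda$ from the proof of Lemma~\ref{l.diss} gives $\nabla u_1 = 0$; since $\Omega$ is connected, $u_1 = \alpha \one_\Omega$ and $u_2 = \trace u_1 = \alpha\one_\Gamma$, so $\calu = \alpha \one$ with $\alpha \neq 0$. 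Hypothesis~\ref{h.sym} ($\Div c = 0$ and $c\cdot\nu = 0$) yields
\[
\frq[\one_\Omega, v_1] = -\int_\Omega (\Div c)\overline{v_1}\dd\lambda + \int_\Gamma (c\cdot\nu)\overline{v_1}\dd\sigma = 0
\]
for every $v_1 \in H^1(\Omega)$, so by Proposition~\ref{p.domain} the function $\one$ lies in $D(\calA)$ with $\calA\one = -\calB\one$. The eigenvalue equation then reduces to $\calB \one = i\beta \one$; since $\calB$ is real and $\one$ is real, $\calB\one$ is real, forcing $\beta = 0$. This proves $\sigma(-\calA)\cap i\R \subset \{0\}$, and the very same argument with $\beta = 0$ shows every $\calu \in \ker(-\calA)$ is a multiple of $\one$, so if $0 \in \sigma(-\calA)$ then $\ker(-\calA) = \linSpan \one$, $\spb(-\calA) = 0$, and $0$ is dominant.

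For algebraic simplicity, I will dualize. Since $b = c$, the form $\frh$ is symmetric and $\calL$ is self-adjoint, so $\calA^* = \calL - \calB^*$. The adjoint $\calB^*$ is again real (because $\langle \calB^*\calu,\calv\rangle_\calH = \langle \calu, \calB\calv\rangle_\calH \in \R$ for real $\calu,\calv$) and still dissipative. Running the previous argument for $-\calA^*$ yields $\ker(-\calA^*) = \linSpan \one$, so in particular $\calA^*\one = 0$. If now $\calu \in \ker(\calA^2)$, then $\calA\calu \in \ker \calA = \linSpan \one$, say $\calA\calu = \gamma\one$, and pairing against $\one$ gives
\[
\gamma \|\one\|_\calH^2 = \langle \calA\calu, \one\rangle_\calH = \langle \calu, \calA^*\one\rangle_\calH = 0,
\]
so $\gamma = 0$. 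Hence $\ker(\calA^2) = \ker\calA$ is one-dimensional, which is the required algebraic simplicity.

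The main obstacle is the reduction $\calu = \alpha\one$: one must split $\Re\fra[\calu,\calu] = 0$ into its two independently non-negative pieces, then use the full strength of Hypothesis~\ref{h.sym} to verify that constants lie in $D(\calA)$ with $\calA\one = -\calB\one$, and only at that point invoke reality of $\calB$ to collapse $\calB\one = i\beta\one$ to $\beta = 0$. After this finite-dimensional reduction is in place, the algebraic-simplicity claim becomes a clean duality argument built on the self-adjointness of $\calL$.
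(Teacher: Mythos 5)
Your proof is correct and follows essentially the same route as the paper: accretivity of $\frh$ plus dissipativity of $\calB$ gives dissipativity of $-\calA$; the vanishing of $\Re\fra[\calu,\calu]$ on an imaginary-axis eigenvector forces $\nabla u_1=0$, hence $\calu=\alpha\one$; Hypothesis~\ref{h.sym} then yields $\calA\one=-\calB\one$ and reality of $\calB$ kills the imaginary eigenvalue; and algebraic simplicity comes from the same duality computation $\langle\calA\calu,\one\rangle_\calH=\langle\calu,\calA^*\one\rangle_\calH=0$. The only cosmetic differences are that the paper collapses $\beta=0$ via the pairing $\langle\calB\one,\one\rangle_\calH\in\R$ rather than via reality of the vector $\calB\one$, and the paper explicitly notes that $0$ is a pole of the resolvent (Proposition~\ref{p.spectrum}(a)) before invoking $\ker(\calA^2)\subseteq\ker(\calA)$; you leave that last point implicit, relying on compact resolvent mentioned earlier, which is fine but worth stating.
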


\begin{proof}
    For all $\calu\in D(\fra)$, we have
    \[
    \Re \fra[\calu, \calu] = \frq [u_1, u_1] - \Re \langle \calB\calu, \calu \rangle_\calH \geq 0
    \]
    by Lemma \ref{l.diss} and the assumption that $\calB$ is dissipative. It follows that $-\calA$ is dissipative
    and hence $\calT$ is contractive. This implies that $\spb(-\calA) = \omega_0(\calT) \leq 0$ (recall Proposition~\ref{p.spectrum}(c) for the equality of spectral and growth bounds).

    Now assume that $i\omega \in \sigma(-\calA)\cap i\R$ for some $\omega\in \R$. As $\calA$ has compact resolvent,
    $i\omega$ is an eigenvalue and hence we find $\calv \in \calH$ with $\|\calv\|_\calH=1$ and $-\calA \calv = i\omega\calv$. It follows that $\fra [\calv,\calv] = \langle -\calA\calv,\calv \rangle_\calH = i\omega$
    and hence
    \[
    \frq[v_1,v_1] = i\omega + \langle \calB\calv, \calv\rangle_\calH. 
    \]
    Taking real parts and using that Hypothesis \ref{h.sym} also yields the accretivity of $\frq$ itself, the dissipativity of $\calB$ shows  
    \[
    0\leq \eta \int_\Omega |\nabla v_1|^2\, d\lambda \leq \frq[v_1,v_1]= \Re \langle \calB\calv, \calv\rangle_\calH \leq 0.
    \]
  Therefore $\nabla v_1 = 0$, which shows that $v_1$ (and also $\calv$, as $\calv \in D(\fra)$) is necessarily constant. It follows from Hypothesis \ref{h.sym} that $\calL\calv=\alpha\cdot \calL \one=0$ for some $\alpha \in \C$ and we thus find $i\omega= -\langle \calB \calv, \calv\rangle_\calH = -|\alpha|^2 \langle \calB\one, \one\rangle_\calH \in \R$. This implies that $\omega=0$. Moreover,  we see that $0$ is dominant and geometrically simple.

  To prove the last assertion of the theorem, we require some facts about the adjoint semigroup. Recall that the adjoint generator $-\calA^*$ and dual semigroup $\calT^*$ arise from the \emph{adjoint form}
    \begin{equation*}
        \fra^*[\calu, \calv] \coloneqq \overline{\fra[\calv, \calu]}, \qquad \calu, \calv\in D(\fra).
    \end{equation*}
    Since $\calB$ is dissipative if and only if $\calB^*$ is dissipative, Lemma~\ref{l.diss} again shows that
    \begin{equation*}
        \Re\fra^*[\calu,\calu] = \frq[u_1,u_1] - \Re\langle\calB^* \calu,\calu\rangle_\calH \ge 0
    \end{equation*}
    for all $\calu\in D(\fra)$. As above, we deduce that $\spb(-\calA^*) \le 0$ and $\sigma(-\calA^*)\cap i\R \subset \{0\}$. Due to the relation
    \begin{equation*}
        \sigma(-\calA^*) = \sigma(-\calA)^* \coloneqq \{ \overline{\mu} \suchthat \mu\in\sigma(-\calA) \},
    \end{equation*}
    see e.g.\ \cite[Theorem III.6.22]{Kato76}, it follows that $0\in\sigma(-\calA^*)$ if and only if $0\in\sigma(-\calA)$. Thus if the latter holds, then $0$ is also an eigenvalue of $-\calA^*$, and the previous computations can be applied to $-\calA^*$ to show that the $0$-eigenspace of $-\calA^*$ is one-dimensional and spanned by $\one$.

    Finally, to show that $0$ is an algebraically simple eigenvalue of $-\calA$, it suffices to show that $\ker((-\calA)^2) \subseteq \ker(-\calA)$, since $0$ is a pole of $R(\argument,-\calA)$ by Proposition~\ref{p.spectrum}(a). Hence let $0 \ne \calu\in\ker((-\calA)^2)$, so that $-\calA\calu \in \ker(-\calA)$. Since $\ker(-\calA) = \linSpan(\one)$, there exists $\alpha\in\C\setminus\{0\}$ such that $\alpha \calA\calu \ge 0$. Now observe that
    \begin{equation*}
        \langle\one,\alpha\calA\calu\rangle_\calH = \langle \calA^*\one,\alpha\calu\rangle_\calH = 0,
    \end{equation*}
    because $\calA^* \one = 0$. This implies $\alpha\calA\calu = 0$, and consequently $\calu\in\ker(-\calA)$ as required.
\end{proof}

\begin{cor}
    \label{c.sais0}
    In the situation of Theorem \ref{t.perturbspectral}, one has $\spb(-\calA)=0$ if and only if $\calB\one=0$. Thus,
    if $\calB\one\neq 0$, then $\spb(-\calA) < 0$.
\end{cor}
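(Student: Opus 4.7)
The plan is to reduce both statements to computing $\calA\one$ and then to combine this with the structural information about the spectrum of $-\calA$ that is already available from Theorem~\ref{t.perturbspectral} and Proposition~\ref{p.spectrum}.

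First, I would observe that $\one = (\one_\Omega,\one_\Gamma)$ lies in $D(\calA)$ and compute $\calA\one$ via Proposition~\ref{p.domain}. The distributional identity~\eqref{eq.ldist} yields $L\one_\Omega = -\Div c$, and formula~\eqref{eq.formulanormal} gives $\conormal\one_\Omega = c\cdot\nu$. Under Hypothesis~\ref{h.sym} both of these vanish, so
\begin{equation*}
    \calA\one = \begin{pmatrix} L\one_\Omega \\ \conormal\one_\Omega \end{pmatrix} - \calB\one = -\calB\one.
\end{equation*}
In particular, $\calB\one=0$ if and only if $\one\in\ker(-\calA)$.

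For the easy direction, if $\calB\one=0$ then $0$ is an eigenvalue of $-\calA$, so $0\in\sigma(-\calA)$, and combined with $\spb(-\calA)\le 0$ from Theorem~\ref{t.perturbspectral} this forces $\spb(-\calA)=0$. Conversely, suppose $\spb(-\calA)=0$. The key step, and the main technical point, is to show that this supremum is actually attained, i.e.\ that $0\in\sigma(-\calA)$. Once this is known, Theorem~\ref{t.perturbspectral} gives $\ker(-\calA)=\linSpan(\one)$, so $\calA\one=0$ and hence $\calB\one=0$ by the computation above.

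To see that $\spb(-\calA)=0$ implies $0\in\sigma(-\calA)$, I would combine two facts: by Proposition~\ref{p.spectrum}(a), $\sigma(-\calA)$ consists solely of isolated eigenvalues with no finite accumulation point, and since $-\calA$ is associated to a sectorial form and generates an analytic semigroup, $\sigma(-\calA)$ is contained in a sector of the form $\{\mu:\Re\mu\le\omega,\ |\Im\mu|\le C(\omega-\Re\mu)\}$. Pick a sequence of eigenvalues $\mu_n$ with $\Re\mu_n\to 0$; sectoriality bounds $|\Im\mu_n|$, so $(\mu_n)$ is bounded, and discreteness forces the sequence to be finite, so the supremum is attained at some $\mu_0\in\sigma(-\calA)$ with $\Re\mu_0=0$. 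Hence $\mu_0\in\sigma(-\calA)\cap i\R$, which by Theorem~\ref{t.perturbspectral} equals $\{0\}$, giving $\mu_0=0$. The final statement of the corollary is then the contrapositive of the direction just proved, using $\spb(-\calA)\le 0$.
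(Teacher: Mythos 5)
Your proof is correct and follows essentially the same route as the paper, but with two small refinements worth noting. First, you compute $\calA\one = -\calB\one$ directly from Proposition~\ref{p.domain} (using $L\one_\Omega = -\Div c = 0$ and $\conormal\one_\Omega = c\cdot\nu = 0$ under Hypothesis~\ref{h.sym}), whereas the paper takes a more roundabout path for the forward direction: it first invokes Theorem~\ref{t.perturbspectral} to get $-\calA\one = 0$, then derives $\fra[\one,\calu]=0$ for all $\calu\in D(\fra)$, and finally recovers $\calB\one=0$ by computing $\fra[\one,\calu]=-\langle\calB\one,\calu\rangle_\calH$ via integration by parts. Your direct computation of $\calA\one$ is arguably cleaner and symmetrizes the two directions of the equivalence. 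Second, you explicitly justify the step $\spb(-\calA)=0 \Rightarrow 0\in\sigma(-\calA)$, which the paper leaves implicit when it writes ``If $\spb(-\calA)=0$, then Theorem~\ref{t.perturbspectral} yields $-\calA\one=0$''; your argument via sectoriality of $\sigma(-\calA)$ plus discreteness is valid, and an alternative is to combine the compactness of $\calT$ with the spectral mapping theorem for eventually compact semigroups (so that the spectral radius $e^{t\spb(-\calA)}$ of the compact operator $\calT(t)$ is an eigenvalue, hence $\spb(-\calA)$ is the real part of an eigenvalue of $-\calA$). Either way, the gap you identified is real and your patch is sound.
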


\begin{proof}
    If $\spb(-\calA)=0$, then Theorem \ref{t.perturbspectral} yields $-\calA \one=0$ and hence
    $\fra[\one, \calu]=0$ for all $\calu\in D(\fra)$. Noting that $D_j\one=0$ for all $j=1, \ldots, d$, it follows that
    \begin{align*}
        \fra[\one, \calu] & = \int_\Omega \sum_{j=1}^d b_j\one \overline{D_j u_1}\dd \lambda -\langle \calB\one, \calu\rangle_\calH\\
        & = -\int_\Omega (\Div b)\overline{u_1}\dd \lambda +\int_\Gamma (b \cdot \nu) \overline{u_1} \dd \sigma  -\langle \calB\one, \calu\rangle_\calH = -\langle \calB\one, \calu\rangle_\calH.
    \end{align*}
    Thus, we find $\langle\calB\one, \calu\rangle_\calH= 0$ for all $\calu\in D(\fra)$ which, by density of $D(\fra)$ in $\calH$, implies
    $\calB\one=0$.

    Conversely, if $\calB\one=0$, then $\one\in \ker(-\calA)$, whence $0\in \sigma(-\calA)$. At this point, Theorem 
    \ref{t.perturbspectral} yields $\spb(-\calA)=0$.
\end{proof}

\begin{example}
    \label{ex.skewselfadjoint}
    A particular example of a dissipative operator $\calB$ is a \emph{skew-symmetric}
    (or \emph{skew-adjoint}) operator, i.e.\ $\calB^* = -\calB$.
    In this case, we have $\Re\langle \calB\calu, \calu\rangle_\calH = 0$ for all $u\in \calH$. We note that $\calB$ is skew-symmetric
    if and only if both $B_{11}$ and $B_{22}$ are skew-symmetric and $B_{12}^*= - B_{21}$.

    Let us give a particular example for this in the case of integral operators, see Example \ref{ex.integraloperator}.
    We choose $B_{11}=0$ and $B_{22}=0$ and let $k\in L^\infty(\Omega\times\Gamma; \R)$. Define
    \[
    [B_{12}u_2](x) = \int_\Gamma k(x, z)u_2(z)\dd\sigma(z)\quad\mbox{and}\quad [B_{21}u_1](z) = - \int_\Omega k(x,z)u_1(x)\dd\lambda(x).
    \]
    Then, the resulting operator $\calB$ is skew-symmetric. We note that in this case, we have $\calB\one=0$ if and only if
    \[
    \int_\Omega k(x,z)\dd\sigma(z) = 0 \mbox{ for } \lambda\mbox{-almost all } x\in \Omega
    \]
    and
    \[
    \quad \int_\Gamma k(x,z)\dd\lambda (x) = 0 \mbox{ for } \sigma\mbox{-almost all } z\in \Gamma.
    \]
    These conditions are satisfied, for example, if $k(x,z) = f(x)g(z)$ where $f\in L^\infty(\Omega)$ and $g\in L^\infty(\Gamma)$
    satisfy $\int_\Omega f\dd\lambda = \int_\Gamma g\dd\sigma=0$.
\end{example}

In the situation of Theorem \ref{t.perturbspectral}, the operator $-\calA = - \calL + \calB$ is the sum of two dissipative operators
and thus dissipative itself. We close this section by giving an example showing that merely assuming that $-\calA$ is dissipative
is not sufficient to obtain the conclusions of that theorem.

\begin{example}
    \label{ex.dissipative}
    Let $\Omega = (0,1) \subset \R$ so that $\Gamma = \{0,1\}$. In this case $L^2(\Gamma)\simeq \C^2$. For $\calu$ in $D(\fra)$,
    we have $u_1\in H^1(0,1)\subset C([0,1])$ and we may (and shall) identify $u_2=\trace u_1$ with the vector
    $(u_1(0), u_1(1)) \in \C^2$.
    
    For our example, we choose $a_{11}=1$, $b_1=c_1=0$
    as well as $B_{11}$, $B_{12}$ and $B_{21}$ the appropriate 0 operators. Finally, let 
    \[
    B_{22} = \begin{pmatrix}
        1 & -1\\
        -1 & 1
    \end{pmatrix}.
    \]
    Then $B_{22}$ is symmetric and $\sigma(B_{22}) = \{0,2\}$ so that $B_{22}$ (and hence $\calB$) is \emph{not} dissipative.
    It is easy to see that
    \[
    \langle B_{22}u_2, u_2\rangle_\Gamma = |u_1(1)-u_1(0)|^2.
    \]
    This implies that
    \begin{align*}
    \fra[\calu, \calu] & = \int_0^1|u_1'(t)|^2\dd t - |u_1(1)-u_1(0)|^2 \\
    & = \int_0^1|u_1'(t)|^2\dd t - \Big|\int_0^1 u_1'(t)\dd t\Big|^2 \geq 0,
    \end{align*}
    by Jensen's inequality. Thus, $\fra$ is accretive, implying that $-\calA$ is dissipative, whence $\spb(-\calA) \leq 0$. We
    will show that $0\in \sigma(-\calA)$, so that actually $\spb(-\calA) =0$, but $\ker(-\calA)$ is two-dimensional.

    Indeed, for $\calu\in D(-\calA)$, it follows from Proposition \ref{p.domain} that $-\calA\calu = 0$ if and only if $u_1''=0$ and $\partial_\nu u_1=B_{22} u_2$. The boundary condition translates to
    \begin{align*}
        - u_1'(0) & = u_1(0) - u_1(1)\\
        u_1'(1) & = u_1(1) - u_1(0).
    \end{align*}
    Note that $u_1''=0$ implies that $u_1(t) = a+bt$. But it is easy to see that the boundary conditions are satisfied 
    \emph{independently} of the choice of $a$ and $b$. This shows that $0\in \sigma(-\calA)$ and that
    $\dim\ker(-\calA)=2$.
\end{example}

\section{Eventual positivity}

In Section \ref{subsec.positive-semigroup}, we studied the asymptotic behavior of the semigroup $\calT$ under conditions which ensured the positivity of the semigroup. The advantage in this setting is that we could draw on well-established results in the spectral theory of positive semigroups. However, if the semigroup is not positive, one could ask about positivity for sufficiently large times. This leads to the question of \emph{eventually positive} solutions to evolution equations. While isolated examples of such behavior were known for several decades for matrix semigroups and in the PDE literature, a systematic theory of eventually positive semigroups on infinite dimensional Banach lattices was initiated fairly recently in the papers~\cite{DGK2, DGK1, DG18}. This topic has rapidly developed in the last few years, and the interested reader may consult the recent survey article~\cite{glueck2022} for an overview of the current state of the theory.
Let $(T(t))_{t\ge 0}$ be a strongly continuous semigroup on the Banach lattice $E$. It is natural to call the semigroup $(T(t))_{t\ge 0}$ \emph{eventually positive} if for every $f\ge 0$, there exists $t_0 = t_0(f) \ge 0$ such that
\begin{equation}
\label{eq.e-pos-simple}
    T(t)f \ge 0 \qquad\text{for all }t\ge t_0.
\end{equation}
Many of the general results currently known about eventually positive semigroups are inspired by classical Perron--Frobenius theory and the spectral theory of positive semigroups. For example, it is shown in~\cite[Theorem 7.6]{DGK1} that if $A:D(A)\subset E\to E$ generates a strongly continuous semigroup that satisfies~\eqref{eq.e-pos-simple} and $\sigma(A)\ne\emptyset$, then the spectral bound $\spb(A)$ is a spectral value.

However, the general theory is more fruitful if we consider a stronger notion of eventual positivity, which we will now introduce in the specific context of $L^p$-spaces.
\begin{defn}
\label{def.e-pos-strong}
    Let $(M,\mu)$ be a finite measure space, and let $T = (T(t))_{t\ge 0}$ be a strongly continuous semigroup on the Banach lattice $E = L^p(M,\mu)$. We say that $T$ is \emph{eventually strongly positive} if for every $f\in E_+\setminus\{0\}$, there exists a constant $\delta=\delta_f >0$ and $t_0 = t_0(f) \ge 0$ such that
    \begin{equation*}
        T(t)f \ge \delta\one \qquad\text{for all }t\ge t_0.
    \end{equation*}
    If the time $t_0$ can be chosen independently of $f\in E_+$, then we say that $T$ is \emph{uniformly eventually strongly positive}.
    Note that in this case, for every $t\geq t_0$ the operator $T(t)$ is strictly positive, as $T(t)f\gg 0$ for every $f>0$.
\end{defn}

\begin{rem}
\label{r.terminology}
    In accordance with \cite{DGK1,DGK2}, it would be more appropriate to refer to the notion of (uniform) eventual strong positivity in 
    Definition~\ref{def.e-pos-strong}  as \emph{individual} (respectively \emph{uniform}) \emph{eventual strong positivity with respect to
    the quasi-interior point $\one$}. 
    The general theory developed in \cite{DGK1, DGK2} allows for arbitrary quasi-interior points $u\in E_+$ instead of $\one$.
\end{rem}

In practice, the notion of eventual strong positivity in Definition~\ref{def.e-pos-strong} is related to the question of asymptotic behavior and lower bounds on solutions of evolution equations. For our applications, we are particularly interested in the eigenspace corresponding to the spectral bound $\spb(A)$, and hence we introduce the following notion: an operator $P:L^p(M, \mu) \to L^p(M, \mu)$ is called \emph{strongly positive} if for all $f>0$, there exists $\delta = \delta(f)>0$ such that
\begin{equation}
\label{eq.strongly-pos-proj}
    Pf \ge \delta \one.
\end{equation}
Another key ingredient for our purposes is the \emph{smoothing condition}
\begin{equation}
\label{eq.smoothing-Eu}
    T(t_1)L^p(M,\mu) \subset L^\infty(M,\mu) \quad\text{for some }t_1 > 0.
\end{equation}
By combining this condition with spectral information about the generator, the following characterization of eventually strongly positive semigroups can be given. For simplicity, we only state the result for generators with compact resolvent, which is the case considered in this article.

\begin{thm}
\label{t.eventual-pos-characterize}
    Let $T=(e^{tA})_{t\ge 0}$ be a real, strongly continuous semigroup on $E = L^p(M,\mu)$, such that the generator $A:D(A)\subset E\to E$ has compact resolvent. If $T$ satisfies the smoothing condition~\eqref{eq.smoothing-Eu}, then the following are equivalent:
    \begin{enumerate}[\upshape (i)]
        \item $T$ is eventually strongly positive.
        \item $\spb(A)$ is a dominant eigenvalue, and the corresponding spectral projection $P$ is strongly positive.
        \item $\spb(A)$ is a dominant eigenvalue and geometrically simple, and the corresponding eigenspace is spanned by a vector $v$ such that $v\ge \delta\one$ for some constant $\delta>0$. Moreover, the dual eigenspace $\ker(\spb(A)I - A')$ contains a strictly positive functional $\psi$ {\upshape(}i.e.\ a positive functional such that $\langle \psi,f \rangle > 0$ for all $f\in E_+\setminus\{0\}${\upshape)}.
    \end{enumerate}
    If any of the above conditions hold, then $\spb(A)$ is even an algebraically simple eigenvalue of $A$.
\end{thm}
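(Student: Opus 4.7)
The plan is to prove the equivalences by combining a linear-algebra/duality argument on the spectral projection at $\spb(A)$ (for (ii) $\Leftrightarrow$ (iii)) with a dynamical spectral-decomposition argument (for (ii) $\Leftrightarrow$ (i)), drawing on the framework of \cite{DGK1, DGK2}. The algebraic simplicity assertion then drops out of the rank-one structure uncovered along the way.

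For (ii) $\Leftrightarrow$ (iii), compact resolvent makes any spectral value a pole of $R(\argument,A)$, so the spectral projection $P$ at $\spb(A)$ is of finite rank. Under (iii), I upgrade geometric to algebraic simplicity by duality: if $u\in\ker((\spb(A)I-A)^2)$, then $(A-\spb(A))u = \alpha v$ for some $\alpha\in\C$, and pairing against $\psi$ gives $\alpha\langle\psi,v\rangle = \langle(A'-\spb(A))\psi, u\rangle = 0$, hence $\alpha = 0$. Thus $P$ is rank one, $Pf = \frac{\langle\psi,f\rangle}{\langle\psi,v\rangle}v$, which is strongly positive because $\psi$ is strictly positive and $v\ge\delta\one$. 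Conversely, under (ii), the range of $P$ is spanned by $v = P\one \ge \delta\one$ (after normalization), and the strict positivity of the associated dual functional $f \mapsto \langle \one, Pf\rangle$ produces $\psi$; a short argument using strong positivity of $P$ on an invariant finite-dimensional subspace forces the range to be one-dimensional.

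For (ii) $\Rightarrow$ (i), decompose $T(t) = e^{\spb(A)t}P + T(t)(I-P)$. Dominance together with the smoothing assumption~\eqref{eq.smoothing-Eu} gives, for some $\omega < \spb(A)$ and constant $C$, the estimate $\|T(t)(I-P)f\|_{L^\infty} \le Ce^{\omega t}\|f\|_E$ for $t$ large; this uses the composition $T(t) = T(t_1)T(t-t_1)$ to land in $L^\infty$, and the equality of spectral and growth bounds for the eventually norm-continuous semigroup $T(\argument)(I-P)$ on $\ker P$. Strong positivity of $P$ then yields, for every $f\in E_+\setminus\{0\}$,
\begin{equation*}
T(t)f \;\ge\; \delta_f e^{\spb(A)t}\one - Ce^{\omega t}\|f\|_E \one \;\ge\; \tfrac{1}{2}\delta_f e^{\spb(A)t}\one
\end{equation*}
for all $t$ sufficiently large, proving eventual strong positivity.

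The main obstacle is (i) $\Rightarrow$ (iii): from the qualitative hypothesis of eventual strong positivity alone, one must reconstruct the full spectral picture at $\spb(A)$. I would first invoke \cite[Theorem 7.6]{DGK1} to obtain $\spb(A) \in \sigma(A)$, which by compact resolvent is an eigenvalue and a pole of the resolvent; Perron-type limiting of $e^{-\spb(A)t}T(t)f$, made precise using the smoothing condition as a compactness device in $L^\infty$, yields an eigenvector $v\ge\delta\one$, and the analogous argument for the dual semigroup $T(t)'$ (which inherits an eventual positivity property on a dense subspace of the dual) produces the strictly positive functional $\psi$. The delicate step is dominance: ruling out peripheral eigenvalues $\spb(A) + i\beta$ with $\beta\ne 0$ cannot rely on the classical Perron--Frobenius cyclicity of genuinely positive semigroups, and must instead combine the pointwise lower bound $T(t)f \ge \delta_f\one$ for large $t$ with an oscillation argument applied to any hypothetical non-real peripheral eigenfunction. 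This step, which is the technical heart of \cite{DGK1}, is where I expect the main difficulty to lie.
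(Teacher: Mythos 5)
The paper's own proof is simply a two-sentence citation: the equivalence (ii)$\Leftrightarrow$(iii), together with the algebraic simplicity assertion, is attributed to~\cite[Corollary 3.3]{DGK2}, while (i)$\Leftrightarrow$(ii) is attributed to~\cite[Theorem 5.2]{DGK2}. Your proposal instead tries to sketch the underlying arguments, which is a genuinely different route; unfortunately, two of the sketched implications have real gaps.

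Your argument for (iii)$\Rightarrow$(ii) is sound: using $\psi\in\ker(\spb(A)I-A')$ to kill any generalized eigenvector not in $\ker(\spb(A)I-A)$ gives algebraic simplicity, whence $P$ is the rank-one projection $f\mapsto\frac{\langle\psi,f\rangle}{\langle\psi,v\rangle}v$, visibly strongly positive. However, in the converse direction (ii)$\Rightarrow$(iii) you write ``the range of $P$ is spanned by $v=P\one$,'' which already presupposes rank one; you then defer the actual content to ``a short argument'' that you do not supply. That argument — a strongly positive finite-rank projection necessarily has rank one — is precisely the substance of \cite[Corollary 3.3]{DGK2}, and without it the equivalence is not established.

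The implication (ii)$\Rightarrow$(i) contains a more serious gap. Your estimate $\|T(t)(I-P)f\|_{L^\infty}\le Ce^{\omega t}\|f\|_E$ with $\omega<\spb(A)$ requires $\omega_0\bigl(T(\argument)|_{\ker P}\bigr)<\spb(A)$, and you justify this by appealing to ``the equality of spectral and growth bounds for the eventually norm-continuous semigroup $T(\argument)(I-P)$.'' But eventual norm-continuity of $T$ is not among the hypotheses, and it does not follow from compact resolvent together with the smoothing condition \eqref{eq.smoothing-Eu} (the embedding $L^\infty(M,\mu)\hookrightarrow L^p(M,\mu)$ is not compact, so $T(t_1)$ need not be a compact operator on $E$). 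Moreover, even the spectral bound of $A|_{\ker P}$ need not be strictly less than $\spb(A)$: dominance says every $\lambda\in\sigma(A)\setminus\{\spb(A)\}$ has $\Re\lambda<\spb(A)$, but under compact resolvent alone these real parts may still accumulate at $\spb(A)$. The actual proof in \cite[Theorem 5.2]{DGK2} circumvents this by working directly with the Laurent expansion of the resolvent at the pole $\spb(A)$ and the smoothing operator $T(t_1):E\to L^\infty$, rather than through a growth-bound estimate for the complementary semigroup. Finally, you explicitly leave (i)$\Rightarrow$(iii) unfinished, acknowledging that ruling out peripheral spectrum is ``where I expect the main difficulty to lie''; this is indeed the technical heart of~\cite{DGK1,DGK2} and it is absent from the proposal. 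In short: the roadmap is reasonable and (iii)$\Rightarrow$(ii) is correct, but the proof as written is incomplete in (ii)$\Rightarrow$(iii), contains an unjustified (and, under these hypotheses, unavailable) norm-continuity claim in (ii)$\Rightarrow$(i), and does not address (i)$\Rightarrow$(iii) at all.
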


\begin{proof}
    The equivalence of (ii) and (iii), and the fact that these conditions imply algebraic simplicity of $\spb(A)$, is a general property of strongly positive projections, which was proved in~\cite[Corollary 3.3]{DGK2}.

    The equivalence of (i) and (ii) is proved in~\cite[Theorem 5.2]{DGK2}.
\end{proof}

\begin{rem}
    For the interested reader, we point out that the implication (i) $\Rightarrow$ (ii) in Theorem~\ref{t.eventual-pos-characterize} holds even without the smoothing condition, and this was proved in~\cite[Theorem 5.1]{DG17}. The reverse implication is possible thanks to the smoothing condition, and does not hold in general, even for semigroups with bounded generators. A counterexample is shown in~\cite[Example 5.4]{DGK2}.
    
\end{rem}

We now return to the Wentzell--Robin semigroup $\calT$. Our spectral analysis in the previous section leads to a simple sufficient criterion for eventual strong positivity.
\begin{thm}
\label{t.unif-eventual-pos}
    Assume Hypotheses~\ref{h.main} and~\ref{h.sym}. Suppose that $\calB$ satisfies the following conditions:
    \begin{enumerate}[\upshape (i)]
        \item $\calB$ is dissipative and $\calB\one = 0$;
        \item $\calB$ is bounded on $L^\infty(\Omega)\times L^\infty(\Gamma)$ and extrapolates to a bounded operator on $L^1(\Omega)\times L^1(\Gamma)$.
    \end{enumerate}
    Then $\calT$ is eventually strongly positive.
\end{thm}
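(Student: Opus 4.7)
The plan is to apply Theorem~\ref{t.eventual-pos-characterize} to $-\calA$ on $\calH$, which requires three ingredients: the smoothing condition~\eqref{eq.smoothing-Eu}, dominance of the eigenvalue $\spb(-\calA)$, and strong positivity of the associated spectral projection in the sense of~\eqref{eq.strongly-pos-proj}.

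For the spectral side, Hypothesis~\ref{h.sym} together with the dissipativity of $\calB$ places us exactly in the setting of Theorem~\ref{t.perturbspectral}. Since Hypothesis~\ref{h.sym} forces $\calL\one = 0$ and condition~(i) gives $\calB\one = 0$, we have $-\calA\one = 0$, so Corollary~\ref{c.sais0} yields $\spb(-\calA) = 0$. Theorem~\ref{t.perturbspectral} then guarantees that $0$ is dominant and algebraically simple with $\ker(-\calA) = \linSpan(\one)$. Running the same argument for the adjoint---$\calB^*$ is dissipative precisely when $\calB$ is---also produces $\ker(-\calA^*) = \linSpan(\one)$. The spectral projection onto $\ker(-\calA)$ is therefore the rank-one operator
\begin{equation*}
    P\calu = \frac{\langle \calu, \one\rangle_\calH}{\|\one\|_\calH^2}\,\one,
\end{equation*}
and for $\calu \in \calH_+\setminus\{0\}$ one has $\langle \calu, \one\rangle_\calH > 0$, so $P\calu = c\,\one \geq c\,\one$ with $c>0$, which is strong positivity.

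For the smoothing condition, condition~(ii) combined with Riesz--Thorin interpolation between $L^1(\Omega)\times L^1(\Gamma)$ and $L^\infty(\Omega)\times L^\infty(\Gamma)$ delivers boundedness of $\calB$ on every $L^p(\Omega)\times L^p(\Gamma)$ with $p\in[1,\infty]$, and in particular on $L^{d-1+\eps}(\Omega)\times L^{d-1+\eps}(\Gamma)$ for any fixed $\eps>0$. Together with $c\in W^{1,\infty}(\Omega;\R^d)$ from Hypothesis~\ref{h.sym}, this verifies Hypothesis~\ref{h.additional}, so Theorem~\ref{t.hoelder} applies and provides some $k\in\N$ with $D(\calA^k)\subset L^\infty(\Omega)\times L^\infty(\Gamma)$. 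Analyticity of $\calT$ from Theorem~\ref{t.generate} then gives $\calT(t)\calH\subset D(\calA^k)$ for every $t>0$, confirming~\eqref{eq.smoothing-Eu}. At this point all the hypotheses of Theorem~\ref{t.eventual-pos-characterize} are in place and the conclusion follows.

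The spectral work is already packaged in Theorem~\ref{t.perturbspectral} and Corollary~\ref{c.sais0}, and the smoothing reduces to an interpolation step plus Theorem~\ref{t.hoelder}, so the bulk of the argument is organisational. The step that requires most care is identifying the spectral projection, which relies on $\ker(-\calA^*) = \linSpan(\one)$; I expect this to be the point worth double-checking, but it is supplied by the adjoint analysis carried out inside the proof of Theorem~\ref{t.perturbspectral}.
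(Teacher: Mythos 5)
Your proposal is correct and follows essentially the same route as the paper: invoke Theorem~\ref{t.perturbspectral} for the spectral data (dominance and simplicity of $\spb(-\calA)=0$ with kernel spanned by $\one$, and the analogous facts for $-\calA^*$), use interpolation plus Theorem~\ref{t.hoelder} for the smoothing condition, and close with Theorem~\ref{t.eventual-pos-characterize}. The only cosmetic difference is that you verify condition (ii) of Theorem~\ref{t.eventual-pos-characterize} by writing out the rank-one spectral projection explicitly, whereas the paper verifies the equivalent condition (iii) directly via the eigenvector $\one$ and the dual functional $\langle\one,\cdot\rangle_\calH$; since these conditions are equivalent, this is an equally valid packaging of the same spectral input. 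One small imprecision worth flagging: Theorem~\ref{t.hoelder} in dimension $d\geq 3$ requires $\calB$ bounded on $L^{d-1+\eps}(\Omega)\times L^{d-1+\eps}(\Gamma)$ (which your interpolation step supplies), not Hypothesis~\ref{h.additional} per se, so the clause ``this verifies Hypothesis~\ref{h.additional}, so Theorem~\ref{t.hoelder} applies'' slightly misstates the logical dependency even though the substance is fine.
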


\begin{proof}
    We verify condition (iii) of Theorem \ref{t.eventual-pos-characterize}.

    From Theorem~\ref{t.perturbspectral}, we know that $\spb(-\calA)=0$, $\sigma(-\calA) \cap i\R = \{0\}$, and the associated eigenspace is one-dimensional and spanned by $\one$. In particular, $\spb(A)$ is a dominant eigenvalue of $-\calA$. Theorem~\ref{t.hoelder} shows that
    \begin{equation*}
        \calT(t)\calH \subset L^\infty(\Omega)
        \times L^\infty(\Gamma)
    \end{equation*}
    for all $t>0$, and hence $\calT$ satisfies the smoothing condition~\eqref{eq.smoothing-Eu}.

    Recall that the adjoint generator $-\calA^*$ and dual semigroup $\calT^*$ arise from the adjoint form
    \begin{equation*}
        \fra^*[\calu, \calv] \coloneqq \overline{\fra[\calv, \calu]}, \qquad \calu, \calv\in D(\fra).
    \end{equation*}
    However, since $\calA$ is real, one can show that the Hilbert space adjoints
    $\calA^*$ and $\calT^*$ coincide with the Banach space adjoints $\calA'$ and $\calT'$ --- see~\cite[p.\ 10]{DG18} for a detailed explanation. 
    In particular, if $\calB$ is bounded on $L^\infty(\Omega)\times L^\infty(\Gamma)$ and extrapolates to a bounded operator on $L^1(\Omega)\times L^1(\Gamma)$, then it makes sense to say that $\calB^*$, which is \emph{a priori} bounded on $L^1(\Omega)\times L^1(\Gamma)$, extends to a bounded operator on $(L^1(\Omega)\times L^1(\Gamma))' = L^\infty(\Omega)\times L^\infty(\Gamma)$.

    Regarding the spectrum of $-\calA^*$, we use again the relation
    \begin{equation*}
        \sigma(-\calA^*) = \sigma(-\calA)^* \coloneqq \{ \overline{\mu} \suchthat \mu\in\sigma(-\calA) \}
    \end{equation*}
    as in the proof of Theorem~\ref{t.perturbspectral}. This in particular implies that $\spb(-\calA^*) = 0$ is a dominant eigenvalue of $-\calA^*$, and Theorem~\ref{t.perturbspectral} applied to $-\calA^*$ shows that the $0$-eigenspace is one-dimensional and spanned by $\one$. Thus the dual eigenspace $\ker(-\calA^*)$ contains the strictly positive functional $\psi = \langle \one,\argument\rangle_\calH$, and Theorem \ref{t.eventual-pos-characterize} yields the claim.
\end{proof}

\begin{rem}
\label{r.e-pos}
    (i) The assumption (ii) in Theorem~\ref{t.unif-eventual-pos} is not optimal. 
    Indeed, recalling the conditions of Theorem~\ref{t.hoelder}, we can omit condition (ii) if $d=1$ and $d=2$.
    In case $d\geq 3$, we may replace (ii) with the following more general but technical assumption: 
    there exists some $p \in (d-1,\infty)$ such that $\calB$ is bounded on $L^p(\Omega)\times L^p(\Gamma)$ and 
    extrapolates to a bounded operator on $L^{p'}(\Omega)\times L^{p'}(\Gamma)$, where $p' > 1$ is the conjugate H\"older exponent.

    (ii) The semigroup $\calT$ in Theorem~\ref{t.unif-eventual-pos} is even \emph{uniformly} eventually strongly positive. This is because we can apply Theorem~\ref{t.hoelder} to the adjoint semigroup and obtain
    \begin{equation*}
        \calT^*(t)\calH \subset L^\infty(\Omega)\times L^\infty(\Gamma)
    \end{equation*}
    for all $t>0$. Combined with the spectral information on $-\calA^*$, we can then use~\cite[Theorem 3.1]{DG18} to deduce the following conclusion: there exist $t_0 \ge 0$ and $\delta > 0$ such that
    \begin{equation*}
        \calT(t)\calu \ge \delta\langle \one,\calu\rangle_{\calH}\one
    \end{equation*}
    for all $t\ge t_0$ and all $0\le\calu\in\calH$.
\end{rem}

Following Example~\ref{ex.skewselfadjoint}, we can identify a class of operators $\calB$ for which the corresponding semigroup is (uniformly) eventually strongly positive, but not positive.
\begin{example}
    In Example~\ref{ex.skewselfadjoint}, we constructed a skew-symmetric (hence dissipative) operator $\calB$ that satisfies $\calB\one = 0$ via a real-valued kernel function $k\in L^\infty(\Omega\times \Gamma ;\R)$. 
    Such an operator $\calB$ is clearly bounded on $L^\infty(\Omega)\times L^\infty(\Gamma)$, and also extrapolates to a bounded linear operator on $L^1(\Omega)\times L^1(\Gamma)$. Hence $\calB$ satisfies the assumptions of Theorem~\ref{t.unif-eventual-pos}. 
    By Remark~\ref{r.e-pos}(ii) we know that the operator $-\calA$ associated to such $\calB$ generates a uniformly eventually strongly positive semigroup $\calT$.

    However, if $k$ is not equal to $0$ almost everywhere, then the conditions
    \begin{equation*}
        \left\{ \begin{aligned}
        \int_\Omega k(x,z)\dd\sigma(z) &= 0 \quad\text{for } \lambda\text{-a.e. } x\in\Omega \\
        \int_\Gamma k(x,z)\dd\lambda(x) &=0 \quad\text{for } \sigma\text{-a.e. } z\in\Gamma
        \end{aligned}\right.
    \end{equation*}
    imply that $k$ changes sign in $\Omega\times \Gamma$ so that $B_{12}$ and $B_{21}$ from Example~\ref{ex.skewselfadjoint} are not positive operators. Consequently, by the characterization in Theorem~\ref{t.characterization}(a), we deduce that the semigroup $\calT$ is not positive.
\end{example}

\section{A 1-dimensional example in detail} \label{sect.perturbation}
In this final section, we examine in detail a one-dimensional example that illustrates the variety of effects that can occur when we add a very simple non-local, skew-symmetric perturbation to an operator that generates a positive semigroup. 
To that end, we investigate a slightly different $B_{22}$ than in Example~\ref{ex.dissipative} and consider the following situation.

\begin{hyp}\label{hyp.perturbation}   
 Let $\Omega = (0,1) \subset \R$, $\Gamma = \{0,1\}$. Let $a_{11}=1$, $b_1=c_1=0$, and let $B_{11}$, $B_{12}$ and $B_{21}$ be the appropriate 0 operators. Finally, let 
    \[
    B_{22} = \begin{pmatrix}
        0 & 1\\
        -1 & 0
    \end{pmatrix}
    \]
    and consider the family of real operators $-\calA_\tau=-\calL+\tau\calB$ for $\tau \in \R$.
\end{hyp} 
It is easily seen, that Hypothesis \ref{hyp.perturbation} automatically implies Hypotheses \ref{h.main} and \ref{h.sym}. This example illustrates the behavior of perturbing a positive operator with a small skew-adjoint matrix on the boundary. Slowly increasing the perturbation parameter $\tau$, we observe that positivity is lost instantly, but eventual positivity is maintained in a certain parameter range. Increasing the perturbation parameter further, we see that eventual positivity will fail for different reasons as one by one the necessary conditions from Theorem~\ref{t.eventual-pos-characterize}~(iii) cease to be fulfilled. More precisely we have the following behavior.

\begin{thm}\label{t.ev.pos}
Assume Hypothesis \ref{hyp.perturbation}. Then there are values $0<\tau_p<\tau_s<\tau^*$ {\upshape(}defined respectively in Formulae \eqref{taup}, \eqref{taus}, and \eqref{taustar} below{\upshape)} such that for $|\tau| <\tau^*$ the following behavior occurs:
\begin{enumerate}[\upshape (a)]
    \item The semigroup $\calT_\tau$ is positive if and only if $\tau=0$.
    \item The semigroup $\calT_\tau$ is eventually strongly positive in the sense of Definition~\ref{def.e-pos-strong}
    if and only if $|\tau|<\tau_p$.
    \item If $|\tau|\in[\tau_p,\tau^*)$ the semigroup $\calT_\tau$ is not eventually strongly positive.
    More precisely:
    \begin{enumerate}[\upshape(i)] 
    \item If $|\tau|=\tau_p$, the spectral bound $\spb(-\calA_\tau)$ is a dominant, algebraically simple eigenvalue, whose eigenspace is spanned by a positive {\rm (}but not strictly positive{\rm )} function. 
    \item If $|\tau| \in (\tau_p,\tau_s)$, the spectral bound $\spb(-\calA_\tau)$ is a dominant, algebraically simple eigenvalue whose eigenspace is spanned by a function with sign change. 
    \item If $|\tau| =\tau_s$, the spectral bound $\spb(-\calA_\tau)$ is a dominant, geometrically simple eigenvalue that is not algebraically simple as the resolvent has a pole of order two.
    \item If $|\tau| \in (\tau_s, \tau^*)$, the spectral bound $\spb(-\calA_\tau)$ is not contained in the spectrum. Instead, there is pair of complex conjugate eigenvalues $\lambda \in \C$ with $\Re \lambda=\spb(-\calA_\tau)$.
\end{enumerate}
\end{enumerate}
\end{thm}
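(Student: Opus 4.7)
The strategy is to exploit the one-dimensional setting to reduce everything to an explicit transcendental characteristic equation, then apply the general criteria from Theorems~\ref{t.characterization},~\ref{t.perturbspectral}, and~\ref{t.eventual-pos-characterize}. Part (a) is immediate from Theorem~\ref{t.characterization}(a): the only nonzero block of $\tau\calB$ is $\tau B_{22}$, whose off-diagonal entries $\tau$ and $-\tau$ cannot both be nonnegative unless $\tau=0$; for $\tau=0$ the semigroup is Markovian by Theorem~\ref{t.characterization}(c). For (b) and (c), I use Proposition~\ref{p.domain} to rephrase $\calA_\tau\calu=\lambda\calu$ as $-u_1''=\lambda u_1$ on $(0,1)$ together with the Wentzell-type conditions $-u_1'(0)-\tau u_1(1)=\lambda u_1(0)$ and $u_1'(1)+\tau u_1(0)=\lambda u_1(1)$. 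Writing $\lambda=\omega^2$ and $u_1(x)=A\cos(\omega x)+B\sin(\omega x)$, the boundary conditions give a $2\times 2$ homogeneous linear system in $(A,B)$ whose determinant evaluates to
\[
F(\omega,\tau)\coloneqq(\omega^4-\omega^2+\tau^2)\sin\omega-2\omega^3\cos\omega,
\]
with a natural eigenvector $u_1^\tau(x)=(\omega+\tau\sin\omega)\cos(\omega x)-(\omega^2+\tau\cos\omega)\sin(\omega x)$ satisfying $u_1^\tau(0)=\omega+\tau\sin\omega$ and $u_1^\tau(1)=\omega(\cos\omega-\omega\sin\omega)$. Since $F$ is even in $\tau$, one may assume $\tau\ge 0$; a local expansion $F=(\tau^2-3\omega^2)\omega+O(\omega^5)$ near $\omega=0$ shows that for $\tau>0$ small, a new positive real root $\omega_1(\tau)\sim\tau/\sqrt{3}$ emerges, yielding the principal eigenvalue $\lambda_1(\tau)=\omega_1(\tau)^2$.

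Next I identify the critical parameters. Since $u_1^\tau(1)=0$ precisely when $\omega\tan\omega=1$, let $\omega_p\in(0,\pi/2)$ be the unique solution and set $\tau_p\coloneqq \omega_p\sqrt{\omega_p^2+1}$ (obtained by substituting $\tan\omega_p=1/\omega_p$ into $F(\omega_p,\tau_p)=0$). Define $\tau_s$ as the smallest $\tau>\tau_p$ for which the system $F(\omega,\tau)=\partial_\omega F(\omega,\tau)=0$ admits a common solution $\omega_s$, corresponding to the collision of $\lambda_1(\tau)$ with the second smallest positive real eigenvalue. Finally, $\tau^*$ is the first $\tau>\tau_s$ at which the principal spectral structure changes again, i.e.\ when the real part of the complex pair born at $\tau_s$ meets another real eigenvalue. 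The four subcases of (c) then follow: (i) at $\tau=\tau_p$, $u_1^\tau$ is monotone decreasing on $[0,1]$ (no interior critical point since $\omega_p<\pi/2$ and $B/A<0$ makes $\tan(\omega x)=B/A$ unsolvable on $[0,\omega_p]$), with $u_1^\tau(0)>0=u_1^\tau(1)$, hence nonnegative but not strictly positive; (ii) for $\tau\in(\tau_p,\tau_s)$, monotone dependence of $\omega_1$ on $\tau$ gives $\omega_1(\tau)>\omega_p$, forcing $u_1^\tau(1)<0$ and hence a sign change; (iii) at $\tau=\tau_s$ the double root of $F$ yields algebraic multiplicity at least $2$, while the one-dimensional null space of the coefficient matrix forces the eigenspace to be one-dimensional, so the pole of the resolvent has order exactly $2$; (iv) for $\tau\in(\tau_s,\tau^*)$, the Taylor expansion $F\approx\tfrac12\partial_\omega^2 F\,(\omega-\omega_s)^2+\partial_\tau F\,(\tau-\tau_s)$ at $(\omega_s,\tau_s)$, together with $-\partial_\tau F/\partial_\omega^2 F<0$ (verified by direct computation), shows the double root splits into a conjugate pair of complex roots whose real part equals $\spb(-\calA_\tau)$.

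For (b), I verify condition (iii) of Theorem~\ref{t.eventual-pos-characterize}. For $|\tau|<\tau_p$, Theorem~\ref{t.perturbspectral} together with Corollary~\ref{c.sais0} (using $\calB\one\ne 0$) give that $\spb(-\calA_\tau)=-\lambda_1(\tau)<0$ is a dominant, algebraically simple eigenvalue; the primal eigenvector $u_1^\tau$ is strictly positive on $[0,1]$ by the monotonicity argument of (c)(i) together with $u_1^\tau(1)>0$. Since $\calA_\tau^*=\calA_{-\tau}$ (as $\calL^*=\calL$ and $\calB^*=-\calB$), the dual eigenvector is obtained by replacing $\tau\mapsto-\tau$ in the formula; an analogous monotonicity/concavity argument shows it is strictly positive throughout $|\tau|<\tau_p$, so the associated functional on $\calH$ is strictly positive. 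All three hypotheses of Theorem~\ref{t.eventual-pos-characterize}(iii) then hold, and the semigroup is eventually strongly positive. Conversely, for $|\tau|\ge\tau_p$ the analysis in (c)(i)--(iv) shows that at least one hypothesis of (iii) fails in every regime (either $u_1^\tau\not\ge\delta\one$, or algebraic simplicity fails, or $\spb(-\calA_\tau)\notin\sigma(-\calA_\tau)$), so eventual strong positivity fails.

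The main obstacle will be the global spectral analysis in the complex $\omega$-plane: one must ensure that as $\tau$ ranges over $(0,\tau^*)$, no unrelated branch of roots of $F(\cdot,\tau)=0$ (other than $\omega_1(\tau)$ and its collision partner at $\tau_s$) drifts into the position of smallest real part, which would destroy the picture described in (c). This will likely require quantitative estimates on $F$ for complex $\omega$, e.g.\ via Rouch\'e-type or homotopy arguments, in order to track all eigenvalues of $\calA_\tau$ globally as functions of $\tau$.
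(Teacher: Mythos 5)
Your overall strategy mirrors the paper's: reduce to an explicit transcendental characteristic equation, locate $\tau_p$ via the zero of the principal eigenfunction at the boundary, locate $\tau_s$ via a double root of the characteristic function, and then invoke Theorem~\ref{t.eventual-pos-characterize}. The characteristic determinant you compute is equivalent to the paper's (after the change of variable $w = i\mu$), and your identification $\tau_p = \sqrt{\mu_p^2 + \mu_p^4}$ with $\cot\mu_p = \mu_p$ is the same as the paper's. However, there are two genuine gaps.

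First, your invocation of Theorem~\ref{t.perturbspectral} and Corollary~\ref{c.sais0} to conclude that $\spb(-\calA_\tau)$ is a dominant, algebraically simple eigenvalue for $|\tau| < \tau_p$ does not work. Since $\calB\one \ne 0$, Corollary~\ref{c.sais0} gives $\spb(-\calA_\tau) < 0$, which means $0 \notin \sigma(-\calA_\tau)$ — and the dominance/simplicity conclusions in Theorem~\ref{t.perturbspectral} are formulated only under the hypothesis $0 \in \sigma(-\calA_\tau)$. So those results give you dissipativity but tell you nothing about whether the (strictly negative) spectral bound is attained, dominant, or algebraically simple. The dominance and simplicity have to come from a separate global spectral argument.

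Second, and relatedly, your definition of $\tau^*$ (``the first $\tau > \tau_s$ at which the spectral structure changes again'') is circular: it presupposes exactly the global picture you are trying to establish. The paper resolves both issues simultaneously with a concrete perturbation argument: it sets $\tau^* \coloneqq \tfrac12|\lambda_3(0)-\lambda_2(0)|$, exploits self-adjointness of $-\calL$ to get the explicit resolvent bound $\|R(\lambda,-\calL)\| = \dist(\lambda,\sigma(-\calL))^{-1}$, and uses a Neumann series to show that the vertical half-plane $\bbH = \{\Re\lambda \ge \lambda_2(0) - \tau^*\}$ contains no spectrum of $-\calA_\tau$ outside a fixed box, for all $|\tau| < \tau^*$. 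It then shows the spectral projection onto that box is continuous in $\tau$ and hence has constant rank $2$. This is precisely what pins down the trichotomy (two real simple eigenvalues / one double real eigenvalue / one conjugate pair) and is what makes ``dominant'' and ``algebraically simple'' provable. You correctly flag this as ``the main obstacle'' in your last paragraph, but it is not a side issue one can defer: without it, the case analysis in (c) and the dominance claim in (b) are not established. The paper's choice of $\tau^*$ and the rank-continuity argument is the single idea you are missing.

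A smaller point: for $\tau < 0$ the principal eigenfunction has a different explicit formula than your $u_1^\tau$ (the paper's $v^-$ versus $v^+$), and ``an analogous monotonicity/concavity argument'' is doing real work there — in particular, the zero of $v^-$ at $\tau = -\tau_p$ occurs at $x=0$, not $x=1$. Since the dual eigenvector for $\tau>0$ \emph{is} the primal eigenvector for $\tau<0$, you cannot treat the $\tau<0$ case by pure duality; it must be computed, as the paper does.
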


\begin{rem} (i) In the case $|\tau|<\tau_p$, we even have uniform eventual strong positivity, see Remark~\ref{r.e-pos}(ii).

(ii) In the case $|\tau|=\tau_p$, it follows from \cite[Theorem 8.3]{DGK2} that the semigroup $\calT_\tau$ is at least
\emph{asymptotically positive} in the sense that 
    \[
    \mathrm{dist}(e^{-t \spb(-\calA_\tau)}\calT_\tau(t) \calf, \calH_+) \to 0\qquad\mbox{ as } t\to \infty
    \]
for every $\calf\in \calH_+$. Note that the rescaled semigroup $(e^{-t\spb(-\calA_\tau)}\calT_\tau(t))_{t\geq 0}$ has growth bound and spectral bound
$0$. Thus, asymptotic positivity means that for positive initial data, the orbit under the semigroup, when appropriately rescaled, approaches
the positive cone $\calH_+$ as $t\to \infty$. 
\end{rem} 

Before we prove Theorem \ref{t.ev.pos}, we need some preparation. Firstly, we collect general spectral properties of $\calA_\tau$. Note that $(0,\infty) \in \rho(-\calA_\tau)$, so if $\lambda \in \sigma(-\calA_\tau)$, then $-\lambda \in \C\setminus (-\infty, 0]$. We let
$\sqrt{\argument} : \C\setminus (-\infty, 0]\to \C$ denote the principal branch of the square root. For $-\lambda \in \C\setminus (-\infty, 0]$
we set $\mu = \sqrt{-\lambda}$ and $w = i\mu = i\sqrt{-\lambda}$. Note that with this convention we always have $\Re\mu\geq 0$ and $\Im w \geq 0$.

\begin{prop}\label{prop.spect.prop}
   Assume Hypothesis \ref{hyp.perturbation}. Then it is $\calA_\tau^*=\calA_{-\tau}$ and $\sigma(\calA_\tau)=\sigma(\calA_{-\tau})$. 
   Moreover, $\lambda \in \sigma(-\calA_\tau)$ if and only if $\mu=\sqrt{-\lambda}$ satisfies
       \begin{equation}\label{eq.mu}
       \cot(\mu)=\frac{\tau^2-\mu^2+\mu^4}{2\mu^3}.
       \end{equation}     
    All spectral values of $-\calA_\tau$ are isolated eigenvalues which are geometrically simple.
    For all $\tau \neq 0$, it holds that $\spb(-\calA_\tau) <0$.
\end{prop}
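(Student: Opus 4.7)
\medskip

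\textbf{Plan of proof.} The proof proceeds in four stages corresponding to the assertions of the proposition.

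For the identification $\calA_\tau^* = \calA_{-\tau}$, I will compute the adjoint form. Recall $\fra[\calu,\calv] = \frq[u_1,v_1] - \langle\calB\calu,\calv\rangle_\calH$ where here $\calB = \tau\,\mathrm{diag}(0,B_{22})$. Since Hypothesis \ref{hyp.perturbation} implies $b=c=0$, the form $\frq$ is symmetric. A short computation gives
\[
\fra^*[\calu,\calv] = \overline{\fra[\calv,\calu]} = \frq[u_1,v_1] - \langle\calB^*\calu,\calv\rangle_\calH,
\]
so $\calA_\tau^*$ is the operator associated with the form in which $\calB$ is replaced by $\calB^*$. Since $B_{22}^T=-B_{22}$, we have $\calB^* = -\calB$, which is exactly the operator appearing in the form for $\calA_{-\tau}$. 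Thus $\calA_\tau^*=\calA_{-\tau}$. The spectral identity follows by combining $\sigma(\calA_\tau^*) = \overline{\sigma(\calA_\tau)}$ (see e.g.\ \cite[Theorem III.6.22]{Kato76}) with the fact that $\calA_{-\tau}$ is real and so has spectrum symmetric under complex conjugation.

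For the eigenvalue equation, fix $\lambda\in\C\setminus(-\infty,0]$ and set $\mu = \sqrt{-\lambda}$. By Proposition \ref{p.domain}, $\lambda\in\sigma(-\calA_\tau)$ iff there is a nonzero $\calu=(u_1,u_2)$ with $u_1\in \Dm(L)$, $u_2=\trace u_1$, such that $-u_1'' = \lambda u_1$ on $(0,1)$ together with the boundary equations $-\conormal u_1 + \tau B_{22} u_2 = \lambda u_2$ at $\Gamma$. The general solution of the ODE is $u_1(x)=A\cos(\mu x) + B\sin(\mu x)$; inserting this into the boundary conditions (recalling $\nu(0)=-1$, $\nu(1)=+1$, so $\conormal u_1(0) = -\mu B$ and $\conormal u_1(1) = -\mu A\sin\mu + \mu B\cos\mu$) produces the linear system
\begin{align*}
A(\mu^2 + \tau\cos\mu) + B(\mu + \tau\sin\mu) &= 0, \\
A(\mu^2\cos\mu + \mu\sin\mu - \tau) + B(\mu^2\sin\mu - \mu\cos\mu) &= 0.
\end{align*}
Expanding the $2\times 2$ determinant and using $\sin^2\mu + \cos^2\mu = 1$, the $\tau\mu$ terms cancel and one is left with $(\mu^4-\mu^2+\tau^2)\sin\mu - 2\mu^3\cos\mu = 0$, which gives \eqref{eq.mu} provided $\sin\mu\ne 0$. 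The degenerate cases are ruled out by direct inspection: $\mu = 0$ (i.e.\ $\lambda=0$) forces $A=B=0$ when $\tau\ne 0$, and $\sin\mu = 0$ with $\mu\ne 0$ leads to a nonzero determinant $\mp 2\mu^3$, so neither contributes spectrum.

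For the remaining assertions, the discreteness of $\sigma(-\calA_\tau)$, composed only of eigenvalues, follows from compact resolvent (Theorem \ref{t.generate}, Proposition \ref{p.spectrum}(a)). For geometric simplicity, suppose the eigenspace had dimension two; then \emph{both} rows of the boundary-condition matrix would vanish. The coefficient of $B$ in the second row, $\mu^2\sin\mu - \mu\cos\mu$, would then force $\cot\mu = \mu$ (the case $\mu=0$ being excluded), and substituting $\tau = (1+\mu^2)\cos\mu$ from the second row into the $A$-coefficient of the first row yields $\mu^2 + (1+\mu^2)\cos^2\mu = 0$, which has no real-$\mu$ solutions. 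A slightly more careful argument (splitting $\mu = \alpha + i\beta$ and using that $\tau$ is real) rules this out for complex $\mu$ as well, giving one-dimensionality of every eigenspace. Finally, for $\spb(-\calA_\tau)<0$, I apply Corollary \ref{c.sais0}: since $B_{22}$ is skew-symmetric, $\calB$ is dissipative and Hypothesis \ref{h.sym} holds, so Theorem \ref{t.perturbspectral} applies. Computing $\calB\one = \tau(0,(1,-1))\ne 0$ for $\tau\ne 0$, Corollary \ref{c.sais0} yields $\spb(-\calA_\tau)<0$.

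The main technical obstacle will be the geometric-simplicity argument for complex $\mu$, since the naive real-variable contradiction no longer applies. I expect this can be handled by noting that $-\calA_\tau$ has real coefficients, so any would-be two-dimensional complex eigenspace would yield two linearly independent real solutions, reducing the question to the real case already treated; otherwise one can argue directly from the full set of rank-vanishing equations, which over-determines $\tau$ in a way that is inconsistent with $\tau\in\R$.
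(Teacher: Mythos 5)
Your computation is substantially correct and reaches the same eigenvalue equation as the paper, but via a slightly different ansatz, and there is one step that you have explicitly left as a gap.

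\textbf{Where you agree with the paper.} The structure is the same: compact resolvent gives discrete spectrum of eigenvalues via Proposition~\ref{p.spectrum}(a); the skew-symmetry of $\tau\calB$ plus $\tau\calB\one\neq 0$ plus Corollary~\ref{c.sais0} gives $\spb(-\calA_\tau)<0$; and the eigenvalue equation is found by solving the $2\times 2$ boundary-condition system. Your addition of the explicit argument $\calA_\tau^*=\calA_{-\tau}$ via the adjoint form $\fra^*$ is clean and is \emph{not} spelled out in the paper (which instead observes directly that the determinant of the boundary matrix depends only on $\tau^2$, from which $\sigma(\calA_\tau)=\sigma(\calA_{-\tau})$ follows immediately). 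Both routes are valid.

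\textbf{The different ansatz.} You use $u_1(x)=A\cos(\mu x)+B\sin(\mu x)$, while the paper uses $u_1(x)=\alpha e^{wx}+\beta e^{-wx}$ with $w=i\mu$. These are equivalent changes of basis, and your determinant
\[
(\mu^4-\mu^2+\tau^2)\sin\mu - 2\mu^3\cos\mu
\]
agrees with the paper's $\det M_w$ after substituting $w=i\mu$. Your check that $\sin\mu=0$ with $\mu\neq 0$ yields the nonzero determinant $\mp 2\mu^3$ correctly rules out the singularities of $\cot\mu$. One small slip: to define $\mu=\sqrt{-\lambda}$ by the principal branch you need $-\lambda\in\C\setminus(-\infty,0]$, i.e.\ $\lambda\in\C\setminus[0,\infty)$, not $\lambda\in\C\setminus(-\infty,0]$ as you wrote; this is presumably a typo and does not affect the substance.

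\textbf{The gap.} The geometric simplicity argument is where the exponential ansatz genuinely pays off, and this is exactly the step you flag as unresolved. Over the reals, your observation that $\mu^2+(1+\mu^2)\cos^2\mu>0$ works, but over $\C$ this expression does vanish, so the argument as written does not close. Your proposed repair (``real coefficients give two independent real eigenfunctions'') does not help: for a non-real eigenvalue $\lambda$ the real and imaginary parts of a complex eigenfunction are not eigenfunctions. Your other suggestion --- that the full system of four vanishing entries over-determines $(\mu,\tau)$ --- is the one that actually works, but it needs to be carried out: combining the two first-row entries gives $\tau^2=\mu^4+\mu^2$, while combining the first-row $A$-entry with the second-row pair gives $\tau^2=-\mu^4-\mu^2$, forcing $\mu^2(1+\mu^2)=0$ and hence either $\mu=0$ (excluded) or $\mu^2=-1$ (which yields $\lambda=1>0$, excluded since $\spb(-\calA_\tau)\leq 0$). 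The paper avoids this bookkeeping entirely by working with $M_w$: the two diagonal entries give $\tau e^w=w^2-w$ and $\tau e^w=w-w^2$, whence $w(w-1)=0$, and then $\tau=0$; the residual $\tau=0$ case is handled via Theorem~\ref{t.perturbspectral}. You should either carry out the over-determination argument as above, or switch to the exponential form for this step.
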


\begin{proof}
By Proposition \ref{p.spectrum}(a), all spectral values are isolated eigenvalues with corresponding finite-dimensional eigenspaces.
We see directly that $\tau\calB$ is skew-symmetric, but for $\tau\neq 0$ we have $\tau\calB \one = \tau B_{22} \binom{1}{1}=\binom{\tau}{-\tau}\neq 0$. Thus Corollary \ref{c.sais0} shows that $\spb(-\calA_\tau)<0$. Rewriting $-\calA_\tau \calu=\lambda \calu$ yields the eigenvalue problem
\begin{equation} \label{eq:eigenvalue1D}
\begin{aligned}
        u''(x) & =\lambda u(x),\\
        u'(0)+\tau u(1)& =\lambda u(0),\\
        -u'(1)-\tau u(0) & = \lambda u(1).
\end{aligned}
\end{equation}
Note that for $\tau\neq 0$, 
complex eigenvalues $\lambda$ may also occur, so we use the complex ansatz $\alpha e^{wx}+\beta e^{-wx}$ where $\lambda=w^2$. By standard ODE theory, all solutions of $u''(x)=\lambda u(x)$ are of this shape whenever $\lambda \neq 0$, thus
in particular when $\tau\neq 0$.

A short calculation shows that the boundary condition translates to
$M_w \binom{\alpha}{\beta}=0$ with 
\[ 
M_w=\begin{pmatrix}
w-w^2+\tau e^w & -w-w^2+\tau e^{-w}\\ -w e^{w}-w^2 e^w-\tau & w e^{-w}-w^2 e^{-w}-\tau
\end{pmatrix}.
\]
Note that
\begin{equation} \label{eq:det}
\det M_w=2(-\tau^2-w^2-w^4)\sinh(w)-4 w^3 \cosh(w), 
\end{equation}
which shows that the spectrum only depends on the absolute value of $\tau$. 
Also observe that $\det M_w=0$ if and only if
\[4w^3 \cosh(w)=-2(\tau^2+w^2+w^4)\sinh(w) \text{ or } \coth(w)=-\frac{\tau^2+w^2+w^4}{2w^3}, \] which is equivalent to \eqref{eq.mu} as $\mu=-iw$.

Finally, as $L^2(\Gamma)\simeq \C^2$ is two-dimensional, the dimension of the kernel is at most two. Double eigenvalues, however, can only occur when all entries of $M_w$ are zero. But if the entries on the diagonal are zero, we must have 
$\tau e^w=(w^2-w)$ and $\tau=(w-w^2)e^{-w}$, which yields $\tau=0$. For $\tau=0$, the matrix can only vanish if $w=0$, which was excluded. 
That the unperturbed case has no double eigenvalue at $0$ follows from Theorem \ref{t.perturbspectral}. 
\end{proof}

In the ensuing investigations, we focus on values of $\tau$ close to $0$.
As $-\calA_\tau = -\calL + \tau \calB$ is a perturbation of $-\calL$, perturbation arguments show that for $|\tau|$ in a certain range
the first two eigenvalues of $-\calA_\tau$ are obtained as perturbations of the first two eigenvalues of $-\calL$. The latter can be obtained from Proposition \ref{prop.spect.prop} setting $\tau=0$:

\begin{cor}\label{c.A.nopert}
It is $\sigma(-\calL)\subset (-\infty, 0]$ and $\lambda \in \sigma (-\calL)$ if and only if
$\lambda =-\mu^2$ for a solution $\mu$ of $\cot \mu = \frac{\mu^2-1}{2\mu}$. The largest three eigenvalues are
$\lambda_1(0)=0$, $\lambda_2(0)\approx -1.707$ and $\lambda_3(0) \approx -13.492$. They satisfy $\sqrt{-\lambda_2(0)}<\frac\pi2<\pi<\sqrt{-\lambda_3(0)}.$
\end{cor}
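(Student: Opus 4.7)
\medskip
\noindent\textbf{Proof proposal for Corollary \ref{c.A.nopert}.}
The plan is to specialize Proposition~\ref{prop.spect.prop} to the unperturbed case $\tau=0$ and then perform an elementary analysis of the resulting one-variable transcendental equation on $(0,\infty)$. First, in the setting of Hypothesis~\ref{hyp.perturbation} the coefficients $b=c=0$ clearly satisfy Hypothesis~\ref{h.sym}, and the perturbation $\calB$ vanishes entirely. Lemma~\ref{l.diss} therefore applies to $-\calL$ and shows that $-\calL$ is self-adjoint and dissipative with $\spb(-\calL)=0$. Self-adjointness gives $\sigma(-\calL)\subset\R$, and dissipativity combined with $\spb(-\calL)=0$ yields $\sigma(-\calL)\subset(-\infty,0]$. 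Now setting $\tau=0$ in equation \eqref{eq.mu} of Proposition~\ref{prop.spect.prop} gives
\[
\cot(\mu)=\frac{-\mu^2+\mu^4}{2\mu^3}=\frac{\mu^2-1}{2\mu}
\]
as the characteristic equation, and since $\lambda\le 0$ we may take $\mu=\sqrt{-\lambda}\ge 0$. This verifies the first two assertions, noting separately that $\mu=0$ corresponds to the obvious eigenvalue $\lambda_1(0)=0$ with eigenfunction $\one$ (one just checks $\one''=0$ and $\pm\one'(0)=\pm\one'(1)=0$ against the boundary conditions in~\eqref{eq:eigenvalue1D}).

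For the remaining eigenvalues it is convenient to let $f(\mu):=\cot(\mu)$ and $g(\mu):=\frac{\mu^2-1}{2\mu}=\frac{\mu}{2}-\frac{1}{2\mu}$, and to search for positive solutions of $f(\mu)=g(\mu)$. Note that $g'(\mu)=\frac{1}{2}+\frac{1}{2\mu^2}>0$ on $(0,\infty)$, while on each interval $(k\pi,(k+1)\pi)$ the function $f$ is strictly decreasing from $+\infty$ to $-\infty$. Consequently there is exactly one solution in each such interval. To locate $\mu_2$, I would evaluate at $\mu=\pi/2$: there $f(\pi/2)=0$ while $g(\pi/2)=\frac{\pi^2-4}{4\pi}>0$, so $f<g$ at $\pi/2$; since $f(\mu)-g(\mu)\to+\infty$ as $\mu\to 0^+$, the intermediate value theorem gives $\mu_2\in(0,\pi/2)$, hence $\sqrt{-\lambda_2(0)}<\pi/2$. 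For $\mu_3$, I would evaluate at $\mu=\pi^+$: there $\cot(\mu)\to+\infty$ while $g$ stays bounded (indeed $g(\pi)=\frac{\pi^2-1}{2\pi}\approx 1.46$), so $f-g>0$ just right of $\pi$; at $\mu=2\pi^-$ we have $f\to-\infty$, so the unique solution $\mu_3$ in $(\pi,2\pi)$ satisfies $\mu_3>\pi$, giving $\sqrt{-\lambda_3(0)}>\pi$ as claimed.

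The numerical values are then obtained by bisection. To pin down $\lambda_2(0)\approx -1.707$ it suffices to evaluate $h(\mu):=f(\mu)-g(\mu)$ at $\mu=1.306$ and $\mu=1.308$ and observe a sign change, whence $\mu_2^2\in(1.70,1.71)$. Analogously, to obtain $\lambda_3(0)\approx -13.492$ I would evaluate $h$ at $\mu=3.67$ and $\mu=3.68$ and observe a sign change. I do not expect any substantive obstacle here: the qualitative claims follow cleanly from the monotonicity of $g$ and the piecewise monotonicity of $\cot$, and the numerical values are only claimed to three decimal places so that standard bisection estimates (or, equivalently, evaluating $h$ at the two nearby points given above) suffice. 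The only mildly delicate point is the bookkeeping at $\mu=0$, where the formal equation~\eqref{eq.mu} degenerates; this is handled by observing separately that $\one\in\ker(-\calL)$ and that the derivation in Proposition~\ref{prop.spect.prop} was carried out under the tacit assumption $\lambda\neq 0$.
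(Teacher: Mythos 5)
Your proposal is correct and follows essentially the same route as the paper, which derives the corollary simply by setting $\tau=0$ in Proposition~\ref{prop.spect.prop} (together with Lemma~\ref{l.diss} for $\sigma(-\calL)\subset(-\infty,0]$) and leaves the elementary analysis of $\cot\mu=\frac{\mu^2-1}{2\mu}$ implicit. Your separate treatment of the degenerate value $\lambda=0$ (constant eigenfunction) and the monotonicity/interval-counting argument locating $\mu_2\in(0,\tfrac\pi2)$ and $\mu_3\in(\pi,2\pi)$ are exactly the details the paper suppresses; only note that your quoted two-point sign changes bracket $\lambda_2(0)$ and $\lambda_3(0)$ a bit more coarsely than three decimals, so a slightly finer bisection is needed to certify the stated numerical values.
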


Now set 
\begin{equation}\label{taustar}
    \tau^*=\frac12|\lambda_3(0)-\lambda_2(0)|\approx 5.891.
\end{equation} Then $\lambda_2(0)-\tau^* \approx -7.598$. Set
\[\bbH:=\{\lambda \in \C \suchthat \lambda_2(0)-\tau^* \leq \Re \lambda\}\]
and 
\[ \bbS:=\{\lambda \in \C \suchthat \lambda_2(0)-\tau^* < \Re \lambda \leq 0, |\Im \lambda|< \tau^*\} \subset \bbH.\] 

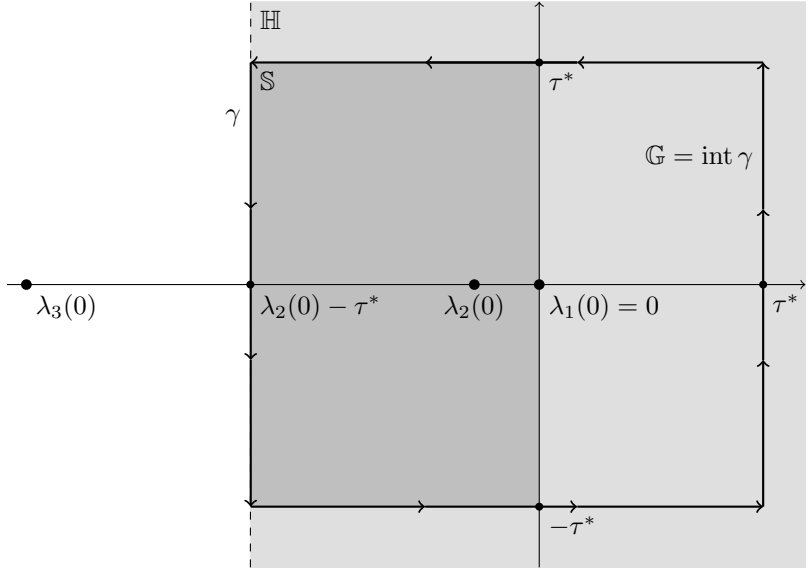
\begin{figure}[ht]
    \centering
    \begin{tikzpicture}[scale=0.5]
     
    \draw[fill=gray!25,gray!25] (-7.589,7.5) node[below right, black]{$\bbH$} -- (-7.589,-7.5)-- (7,-7.5) -- (7,7.5) -- (-7.589,7.5);
     \draw[dashed,black](-7.598,-7.5) --(-7.589, 7.5); 
    \draw[fill=gray!50,gray!50] (-7.589,5.89) node[below right,black]{$\bbS$} -- (-7.589,-5.89)-- (0,-5.89) -- (0,5.89) -- (-7.589,5.89);
    \draw[black,thick,->] (-7.589,5.89) -- node[above left,black]{$\gamma$}(-7.589,2);
    \draw[black,thick,->] (-7.589,2)  -- (-7.589,-2);
    \draw[black,thick,->] (-7.589,-2)  -- (-7.589,-5.89);
    \draw[black,thick,->] (-7.589,-5.89) -- (-3,-5.89); 
    \draw[black,thick,->] (-3,-5.89) -- (1,-5.89); 
    \draw[black,thick,->] (1,-5.89) -- (5.89,-5.89);
    \draw[black,thick,->] (5.89,-5.89)  -- (5.89,-2);
    \draw[black,thick,->] (5.89,-2) -- (5.89,2); 
    \draw[black,thick,->] (5.89,2) -- node[below left, black]{$\bbG=\mathrm{int}\,\gamma$}(5.89,5.89); 
    \draw[black,thick,->] (5.89,5.89) -- (1,5.89);
    \draw[black,thick,->] (1,5.89) -- (-3,5.89);
    \draw[black,thick,->] (1,5.89) -- (-7.589,5.89);

        \draw[->] (-14,0) -- (7,0);
		\draw[->] (0,-7.5) -- (0,7.5);

       
       \fill(-7.598,0)circle(3pt) node[below right]{$\lambda_2(0)-\tau^*$};
       \fill(0,0)circle(4pt) node[below right]{$\lambda_1(0)=0$};
       \fill(-1.707,0)circle(4pt) node[below]{$\lambda_2(0)$};
       \fill(-13.49,0)circle(4pt) node[below right]{$\lambda_3(0)$};
        \fill(0,5.89)circle(3pt) node[below right]{$\tau^*$};
        \fill(5.89,0)circle(3pt) node[below right]{$\tau^*$};
        \fill(0,-5.89)circle(3pt) node[below right]{$-\tau^*$};
   
    \end{tikzpicture}
    \caption{Spectral regions $\bbH$ and $\bbS$, and integration path $\gamma$.}
    \label{fig:enter-label}
\end{figure}

\pagebreak[3]

\begin{lem}\label{lem.spectral.projection}
Let $|\tau|<\tau^*$. Then $\sigma(-\calA_\tau)\cap \bbH\subset \bbS$ and
\begin{equation*}
     \#\big( \sigma(-\calA_\tau) \cap \bbH \big) \in \{1,2\}.
\end{equation*}
Moreover, exactly one of the following three cases occurs:
\begin{enumerate}[\upshape (A)]
    \item $\sigma(-\calA_\tau) \cap \bbH =\{\lambda_1(\tau),\lambda_2(\tau)\}\subset \R$ where $\lambda_2(\tau)<\lambda_1(\tau)=\spb(-\calA_\tau)$. Both eigenvalues are algebraically simple.
    \item $\sigma(-\calA_\tau) \cap \bbH =\{\lambda(\tau)\}\subset \R$ where $\spb(-\calA_\tau)=\lambda(\tau)$ is geometrically simple but not algebraically simple.
    \item $\sigma(-\calA_\tau) \cap \bbH =\{\lambda_1(\tau),\lambda_2(\tau)\}$, where $\lambda_2(\tau)=\overline{\lambda_1(\tau)}$ is a pair of complex conjugates with non-zero imaginary part. Both eigenvalues are algebraically simple.
\end{enumerate}
\end{lem}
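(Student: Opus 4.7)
The plan is to combine an analytic perturbation argument around the contour $\gamma$ with a numerical range estimate coming from dissipativity.

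First, by the choice $\tau^* = \tfrac12|\lambda_3(0) - \lambda_2(0)|$, the contour $\gamma$ (the positively oriented boundary of $\bbG$) lies at Euclidean distance exactly $\tau^*$ from $\sigma(-\calL) \subset \R$: the left vertical side is halfway between $\lambda_2(0)$ and $\lambda_3(0)$, the right vertical side is at distance $\tau^*$ from $0 = \lambda_1(0)$, and each horizontal side is at distance $\tau^*$ from the real axis. Since $-\calL$ is self-adjoint, this yields the uniform resolvent bound $\|R(\lambda,-\calL)\|_{\calL(\calH)} \leq 1/\tau^*$ on $\gamma$. The perturbation $\calB$ acts only on $L^2(\Gamma)\cong\C^2$ as the skew-symmetric rotation $B_{22}$, so $\|\calB\|_{\calL(\calH)} = 1$. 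For $|\tau|<\tau^*$ we therefore have $\|\tau\calB R(\lambda,-\calL)\| < 1$ uniformly on $\gamma$, and the Neumann series
\[
R(\lambda,-\calA_\tau) = R(\lambda,-\calL)\sum_{k=0}^\infty \bigl(\tau\calB R(\lambda,-\calL)\bigr)^k
\]
converges uniformly on $\gamma$. In particular $\gamma \cap \sigma(-\calA_\tau) = \emptyset$, and the Riesz projection
\[
P_\tau \coloneqq \frac{1}{2\pi i} \int_\gamma R(\lambda,-\calA_\tau)\,d\lambda
\]
depends holomorphically on $\tau \in (-\tau^*, \tau^*)$. Since $P_0$ is the rank-$2$ projection onto the span of the $\lambda_1(0)$- and $\lambda_2(0)$-eigenspaces of $-\calL$, and since the rank of a norm-continuous family of projections is locally, hence globally, constant on a connected parameter set, we conclude that $\operatorname{rank}(P_\tau) = 2$ for all $|\tau| < \tau^*$. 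Equivalently, the total algebraic multiplicity of the eigenvalues of $-\calA_\tau$ enclosed by $\gamma$ equals $2$.

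Next, I would trap $\sigma(-\calA_\tau)$ inside $\bbS$ via a form estimate. Every eigenpair $-\calA_\tau \calu = \lambda \calu$ with $\|\calu\|_\calH = 1$ satisfies
\[
\lambda = \langle -\calA_\tau \calu, \calu\rangle_\calH = -\frh[u_1, u_1] + \tau \langle \calB\calu, \calu\rangle_\calH.
\]
Here $\frh[u_1,u_1] \geq 0$ is real, and skew-symmetry of $\calB$ forces $\langle \calB\calu, \calu\rangle_\calH \in i\R$ with $|\langle\calB\calu,\calu\rangle_\calH| \leq \|\calB\| = 1$. Therefore $\Re\lambda \leq 0$ and $|\Im\lambda| \leq |\tau| < \tau^*$, which shows that every eigenvalue in $\bbH$ already lies in $\bbS \subset \bbG$. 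Combined with the rank computation, $\sigma(-\calA_\tau) \cap \bbH$ accounts for algebraic multiplicity exactly $2$, so $\#\bigl(\sigma(-\calA_\tau) \cap \bbH\bigr) \in \{1, 2\}$.

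Finally, the trichotomy (A)--(C) follows by a case distinction. By Proposition \ref{prop.spect.prop}, every eigenvalue of $-\calA_\tau$ is geometrically simple; since $-\calA_\tau$ is a real operator, non-real eigenvalues occur in complex-conjugate pairs with equal algebraic multiplicities. Distributing total algebraic multiplicity $2$ under these constraints leaves precisely three scenarios: two distinct real eigenvalues, each algebraically simple (case (A)); a single real eigenvalue of algebraic multiplicity $2$ with geometric multiplicity $1$, producing a single Jordan block of size $2$ and a second-order pole of the resolvent (case (B)); or a complex-conjugate pair of algebraically simple eigenvalues (case (C)). In all three cases $\spb(-\calA_\tau)$ is attained within $\bbH$ as described, since otherwise $\sigma(-\calA_\tau) \cap \bbH$ would be empty, contradicting $\operatorname{rank}(P_\tau) = 2$. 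The delicate point I expect is the bookkeeping in Step 1: verifying the exact distance $\dist(\gamma, \sigma(-\calL)) = \tau^*$ side by side, which is what calibrates $\tau^*$ to half the spectral gap $|\lambda_3(0) - \lambda_2(0)|$ and ultimately determines the admissible parameter range.
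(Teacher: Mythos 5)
Your proof is correct and follows the paper's strategy in its essential outline: the Neumann series argument along $\gamma$, the Riesz projection $P_\tau$ whose rank is locally constant (the paper cites Kato's Lemma I.4.10 here and gives an explicit norm estimate showing $\tau\mapsto P_\tau$ is continuous), the conclusion that the total algebraic multiplicity inside $\gamma$ is $2$, and the trichotomy read off from geometric simplicity (Proposition~\ref{prop.spect.prop}) together with the fact that $\calA_\tau$ is a real operator. The one genuinely different step is the confinement $\sigma(-\calA_\tau)\cap\bbH\subset\bbS$: you deduce it from a numerical-range estimate, using accretivity of $\frq$ (Lemma~\ref{l.diss}) and skew-adjointness of $\calB$ to get $\Re\lambda\le 0$ and $|\Im\lambda|\le|\tau|<\tau^*$, whereas the paper extends the Neumann series resolvent estimate $\|R(\lambda,-\calL)\|\le(\tau^*)^{-1}$ to all of $\bbH\setminus\bbG$, showing that whole region lies in $\rho(-\calA_\tau)$. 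Both work; yours buys the sharper bound $|\Im\lambda|\le|\tau|$ and is arguably more structural, while the paper's route stays entirely inside the perturbation framework and also handles the left edge $\Re\lambda=\lambda_2(0)-\tau^*$ for free. In your version you need to combine the form estimate with $\gamma\subset\rho(-\calA_\tau)$ to exclude that edge explicitly, which you do only implicitly. Two cosmetic remarks: the distance from $\gamma$ to $\sigma(-\calL)$ is $\ge\tau^*$ rather than exactly $\tau^*$ (equality holds only at the midpoints of sides, not at corners), and your expression $\frh[u_1,u_1]$ should read $\frq[u_1,u_1]$, since $\frh$ is a form on $D(\fra)\subset\calH$ while $\frq$ is the form on $H^1(\Omega)$.
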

\begin{proof}
We adapt the strategy from \cite[Lemma 3.3]{DG18-perturbation} to our situation. As $\sigma(-\calL)\cap \bbH = \{\lambda_1(0), \lambda_2(0)\}$, 
an easy perturbation argument based on the Neumann series shows that if $|\Im \lambda|> \tau^*$, then $\lambda \in \rho(-\calA_\tau)$. Now let $\gamma$ be the path along the boundary of the open box-shaped domain
\[\bbG=\{\lambda \in \C \suchthat \lambda_2-\tau^* < \Re \lambda < \tau^*, |\Im \lambda|< \tau^*\}.\] 
As $-\calL$ is self-adjoint $\|R(\lambda,-\calL)\|^{-1}=|\dist(\lambda,\sigma(-\calL))|$, so for any $\lambda \in \bbH \setminus \bbG \supset \gamma$ we have $\|R(\lambda,-\calL)\|^{-1}=|\dist(\lambda,\sigma(-\calL))|\geq \tau^*$ or, equivalently, $\|R(\lambda,-\calL)\| \leq (\tau^*)^{-1}$.
This implies that $\bbH \setminus \bbG \subset \rho(-\calA_\tau)$ for $\tau < \tau^*$. Indeed, 
\[R(\lambda,-\calA_\tau)=R(\lambda,-\calL)(I-\tau\calB R(\lambda,-\calL))^{-1}=R(\lambda,-\calL)\sum_{k=0}^\infty [\tau\calB R(\lambda,-\calL)]^k,\] and the latter converges absolutely, as $\|\tau\calB R(\lambda,-\calL)\|\leq |\tau|(\tau^*)^{-1}\|\calB\|<1$. 
Now consider the spectral projection
\[P_{\tau}=\frac{1}{2\pi i}\int_{\gamma} R(\lambda,-\calL+\tau\calB)^{-1} \mathrm{d}\lambda.\]
As $-\calL =-\calA_0$ has two algebraically simple eigenvalues in $\bbG$, $P_0$ has rank two. Next, we prove that $P_\tau$ depends continuously
on $\tau$ whence a perturbation result due to Kato \cite[Lemma I.4.10]{Kato76} yields that $P_\tau$ has rank two for \emph{all} $|\tau|<\tau^*$.
To that end, set
$\alpha:=\min_{\lambda \in \gamma} \|R(\lambda,-\calA_\tau)\|^{-1}$. Then for $|\tau|,|\theta|<\tau^*$, $\delta \in (0,1)$, and $|\tau-\theta|<\alpha \delta$ we have 
\[\|R(\lambda,-\calA_\tau)-R(\lambda,-\calA_\theta)\| \leq \|R(\lambda,-\calA_\tau)\|\sum_{k=1}^\infty \|(\theta-\tau)\calB R(\lambda,-\calA_\tau)\|^k=\frac{\delta}{\alpha(1-\delta)} \] 
and thus
\[\|P_{\tau}-P_{\theta}\| < \frac{(4 \tau^*+|\lambda_2(0)|)\delta}{\pi \alpha(1-\delta)} \rightarrow 0\] 
for $\delta \rightarrow 0$.\smallskip

By what was done so far, we see that for $|\tau|<\tau^*$, the operator $-\calA_\tau$ has at most two eigenvalues in $\bbH$, 
and all of them lie in $\bbG \cap \{\lambda \suchthat \Re \lambda \leq 0\}=\bbS$. As $-\calA_\tau$ is real, if $\lambda\in \sigma(-\calA_\tau)\cap(\C\setminus\R)$, then also $\overline{\lambda}\in \sigma(-\calA_\tau)$. This shows that only one of the cases
(A), (B) or (C) can occur:

If there is only one eigenvalue, i.e.\ case (B) occurs, then it has to be a pole of order two of the resolvent, as the corresponding spectral
projection has rank two. Moreover, in this case the eigenvalue has to be real, as otherwise there would be a second eigenvalue. 
If there are two eigenvalues, the same argument shows that they are both real (case (A)) or a pair of complex conjugates (case (C)). 
In all cases, Proposition \ref{prop.spect.prop} yields the geometric simplicity of the eigenvalues.
\end{proof}

\begin{lem}\label{lem.C}
Let $J=\big(0,\sqrt{-\lambda_2(0)}\,\big)$ and $|\tau|<\tau^*$. Consider the function 
\[f: \bar J \rightarrow [0,\infty), \quad \mu \mapsto f(\mu):=\mu\sqrt{2\mu\cot(\mu)+1-\mu^2}.\] 
Then  $f(\mu)>0$ on $J$, $f(\mu)=0$ on $\partial J$ and $f''(x)<0$ on $J$.
Furthermore, for any $\lambda \in (\lambda_2(0)-\tau^*,0]$, we have $\lambda \in \sigma(-\calA_\tau)$ if and only if there exists 
$\mu\in [0, \sqrt{-\lambda_2(0)}]$ with $f(\mu)=|\tau|$, such that $\lambda = -\mu^2$.
\end{lem}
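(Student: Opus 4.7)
The plan is to translate Proposition \ref{prop.spect.prop}'s characterization into the level-set equation $|\tau|=f(\mu)$, establish boundary values, positivity, and concavity of $f$ on $\bar J$, and finally use the sign of $g(\mu):=2\mu\cot\mu+1-\mu^2$ on $(\mu_2,\pi)$ to rule out spurious $\mu\notin\bar J$. Setting $F:=f^2=\mu^2 g(\mu)=2\mu^3\cot\mu+\mu^2-\mu^4$, equation \eqref{eq.mu} is equivalent, for real $\mu>0$, to $\tau^2=F(\mu)$.

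For the boundary values, I use $\mu\cot\mu\to 1$ as $\mu\to 0$ to obtain $g(0^+)=3$ and hence $f(0)=0$; at $\mu_2:=\sqrt{-\lambda_2(0)}$, Corollary \ref{c.A.nopert} yields $\cot\mu_2=(\mu_2^2-1)/(2\mu_2)$, i.e.\ $g(\mu_2)=0$ and $f(\mu_2)=0$. For strict positivity on $J$: since $\mu_2<\pi/2$ by Corollary \ref{c.A.nopert}, $g$ is continuous on $[0,\pi/2)$ with $g(0^+)=3>0$. Any zero of $g$ on $(0,\pi/2)$ corresponds, via the $\tau=0$ case of the above rearrangement, to a non-zero eigenvalue of $-\calL$; by Corollary \ref{c.A.nopert} the smallest positive such zero is $\mu_2$. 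Hence $g>0$, and $f>0$, on $J$.

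The main obstacle is the concavity $f''<0$ on $J$. Since $f>0$ there, the identity $4F^{3/2}f''=2FF''-(F')^2$ reduces this to $(F')^2>2FF''$ on $J$. A direct computation gives
\begin{align*}
 F'(\mu)&=6\mu^2\cot\mu-2\mu^3\csc^2\mu+2\mu-4\mu^3,\\
 F''(\mu)&=12\mu\cot\mu-12\mu^2\csc^2\mu+4\mu^3\csc^2\mu\cot\mu+2-12\mu^2.
\end{align*}
Note that $F''$ is not of constant sign on $J$ (one checks $F''(0^+)=6>0$ while $F''(\mu_2)<0$ by direct inspection), so one cannot shortcut by proving $F$ concave and invoking that the square root of a concave non-negative function is concave. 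Instead, I would expand $(F')^2-2FF''$ using $\csc^2\mu=1+\cot^2\mu$, collect terms in powers of $\cot\mu$ with polynomial coefficients in $\mu$, and verify positivity on the compact subinterval $J\subset(0,\pi/2)$. This trigonometric estimate is the most delicate part of the argument.

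Finally, I establish the spectral characterization. The ``if'' direction is immediate from Proposition \ref{prop.spect.prop}: $\mu\in\bar J$ with $f(\mu)=|\tau|$ gives $\tau^2=F(\mu)$, so $\lambda=-\mu^2\in[\lambda_2(0),0]\subset(\lambda_2(0)-\tau^*,0]$ belongs to $\sigma(-\calA_\tau)$. For ``only if'', let $\lambda\in\sigma(-\calA_\tau)\cap(\lambda_2(0)-\tau^*,0]$ and set $\mu:=\sqrt{-\lambda}\in[0,\sqrt{|\lambda_2(0)|+\tau^*})$. Using the numerical values $\tau^*\approx 5.891$ and $|\lambda_2(0)|\approx 1.707$, we get $\sqrt{|\lambda_2(0)|+\tau^*}<\pi$, so $\mu\in[0,\pi)$. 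The spectral equation forces $F(\mu)=\tau^2\ge 0$, i.e.\ $g(\mu)\ge 0$. But by Corollary \ref{c.A.nopert} the next positive zero of $g$ after $\mu_2$ is $\mu_3=\sqrt{-\lambda_3(0)}>\pi$, and $g(\pi/2)=1-\pi^2/4<0$, so $g<0$ on the whole interval $(\mu_2,\pi)$. Consequently $\mu\in[0,\mu_2]=\bar J$, completing the proof.
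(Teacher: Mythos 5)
Your proposal follows essentially the same route as the paper's proof: both translate the spectral equation \eqref{eq.mu} into the level-set equation $|\tau|=f(\mu)$, compute the boundary values $f(0)=f(\mu_2)=0$ from $\mu\cot\mu\to 1$ and the definition of $\lambda_2(0)$, assert positivity and concavity of $f$, and in the reverse direction restrict $\mu$ to $(0,\pi)$ and exploit the sign of $g(\mu)=2\mu\cot\mu+1-\mu^2$ to force $\mu\le\mu_2$. Your handling of the ``only if'' direction (locating the next zero $\mu_3>\pi$ of $g$ via Corollary~\ref{c.A.nopert} and checking $g(\pi/2)<0$) is exactly what lies behind the paper's terser ``single zero on $(0,\pi)$'' claim.

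Two small points. First, the paper explicitly routes the case $\tau=0$ of the ``only if'' direction through Corollary~\ref{c.A.nopert}, since for $\lambda=0$, $\mu=0$ the rearranged form $F(\mu)=\tau^2$ of \eqref{eq.mu} is not directly available; you omit this, though it is trivial. Second, and more substantively: neither you nor the paper actually verifies $f''<0$ on $J$. The paper dismisses it as ``straightforward calculations''; you go further by reducing it to $(F')^2>2FF''$, computing $F'$ and $F''$ correctly, and making the genuinely useful observation that $F''(0^+)=6>0$ so $F$ itself fails to be concave on $J$ (ruling out the shortcut via concavity of $\sqrt{\cdot}\circ F$), but then you stop at ``I would expand $\ldots$ and verify positivity''. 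The described plan is sound, but the estimate is left as a to-do, so your proof has the same unfilled computational gap as the paper's own one-line claim. If one wished to actually close it, the trigonometric inequality $(F')^2>2FF''$ on $(0,\mu_2)$ does need to be carried out, being careful near $\mu=0$ where $F$, $F'$, and hence $(F')^2-2FF''$ all vanish.
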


\begin{proof}
As $\lim_{\mu\to 0}\mu\cot(\mu) = 1$, we have $f(0)=0$. Straightforward calculations yield $f''(\mu)<0$ and $f(\mu)>0$ on $J$. Furthermore, $f(\mu)=0$ on $\partial J$ as $2\mu\cot(\mu)+1-\mu^2=0$ is equivalent to $\cot(\mu)=\frac{\mu^2-1}{2\mu}$ which is satisfied for $\mu=\sqrt{-\lambda_2(0)}$ by definition. 

\begin{figure}[ht]
    \centering
    \begin{tikzpicture}[scale=1]
    \begin{axis}[axis lines=middle,
        xlabel={$\mu$}, ylabel={$f(\mu)$}, xmin=-0.2, xmax=1.5,ymin=-0.1, ymax=1.3, xmajorticks=false, ytick={0,0.25,0.5,0.75,1,1.25},
        anchor=origin, disabledatascaling,x=5cm,y=5cm]
    \addplot[domain = 0.001:1.3, color=black, thick, smooth]
        {x*sqrt(2*x*(cos(deg(x))/sin(deg(x))) + 1 - x^2)} -- (1.32,0);
    \end{axis}
   \fill(5*0.9307,5*1.1474)circle(2pt) node[above right]{};
   \draw[dotted] (5*0.9307,5*1.1474)--(5*0.9307,0.05);
   \node at (5*0.9307,0)[below]{$\mu_s$};
   \fill(0,0)circle(2pt);
   \node at (0,0)[below left]{\footnotesize$\sqrt{-\lambda_1(0)}$};
   \fill(5*1.32,0)circle(2pt);
   \node at (5*1.32,0)[below right]{\footnotesize$\sqrt{-\lambda_2(0)}$};
    \draw[dotted] (5*0.9307,5*1.1474)--(0.05,5*1.1474);
    \node at (-0.05,5*1.1474)[left]{$|\tau_s|$};
    \draw[very thin] (-0.07, 5*1.1474) -- (0.07, 5*1.1474);
    \end{tikzpicture}
    \caption{The function $f(\mu):=\mu\sqrt{2\mu\cot(\mu)+1-\mu^2}$.}
    \label{fig:tau}
\end{figure}
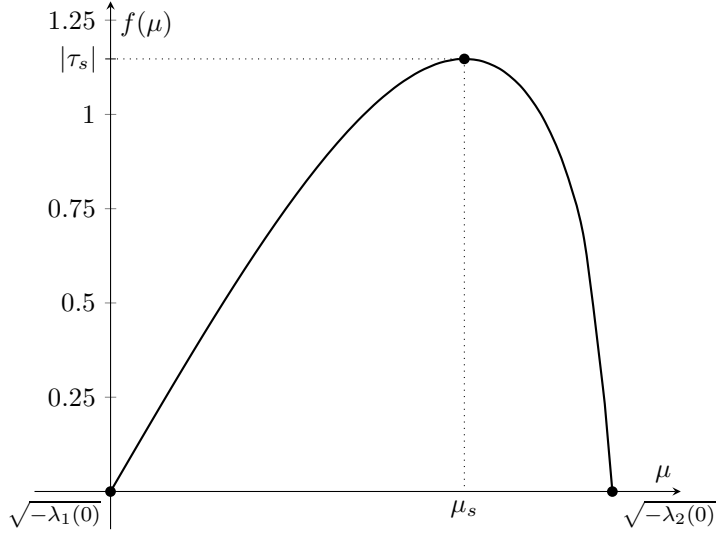

For $\lambda \in (\lambda_2(0)-\tau^*, 0]$ and $\mu \in [0,\sqrt{\lambda_2(0)}]$, the equality $f(\mu)=|\tau|$ implies \eqref{eq.mu} if $\tau \neq 0$. For $\tau=0$, the assertion follows form Corollary \ref{c.A.nopert}. Thus by Proposition \ref{prop.spect.prop} the value $\lambda=-\mu^2$ is an eigenvalue of $-\calA_{\tau}$, which satisfies $\lambda \in [\lambda_2(0), 0]$ and $\lambda \in (\lambda_2(0), 0)$ if $\tau\neq 0$.

On the other hand, let $\lambda\in (\lambda_2(0) -\tau^*, 0]$ be an eigenvalue of $-\calA_\tau$ and $\mu = \sqrt{-\lambda}$.
If $\tau=0$, the assertion follows from Corollary \ref{c.A.nopert}. 
Now let $|\tau|>0$. As $\spb(-\calA)< 0$, it is $\mu>0$. Moreover,
\[0 <\mu=\sqrt{-\lambda} < \sqrt{\tau^*-\lambda_2(0)} < \sqrt{8} < \pi.\] 
Since $\mu$ has to satisfy \eqref{eq.mu} we must have $2\mu\cot(\mu)+1-\mu^2=\frac{|\tau|^2}{\mu^2}> 0$. 
We note that the function $\mu \mapsto 2\mu\cot(\mu) +1-\mu^2$ is continuous on $(0,\pi)$ with a single zero at $\sqrt{-\lambda(0)}$, 
at which it changes sign from positive to negative. Thus, we must have $\mu < \sqrt{-\lambda(0)}$ as claimed.
\end{proof}

In a next step, we precisely characterize the value of $\tau$ for which we are in the critical case (B) of Lemma \ref{lem.spectral.projection}. 

\begin{prop}\label{prop.taus}
\label{p.cp}
    Let $\mu_s$ be such that $f(\mu_s)$ is maximal, i.e.\ $\mu_s$ is
    the unique solution of the equation 
    \begin{equation}\label{eq.maximum}
    1=2\mu^2-3\mu\cot\mu +\mu^2\csc^2\mu
    \end{equation}
    {\rm (}$\mu_s \approx 0.9307${\rm )} and 
    
   \begin{equation}\label{taus} 
   \tau_s\coloneqq f(\mu_s)=\sqrt{2\mu_s^3\cot\mu_s + \mu_s^2-\mu_s^4}\approx 1.1474.
   \end{equation} 
   Then the following hold true:
    \begin{enumerate}
    [\upshape (a)]
        \item  For $|\tau| \in (0, \tau_s)$, we are in case {\upshape(A)} of Lemma \ref{lem.spectral.projection}. Furthermore, $\lambda_2(0)<\lambda_2(\tau)<\lambda_1(\tau)=\spb(-\calA)<0$.
        \item For $|\tau|= \tau_s$,  we are in case {\upshape(B)} of Lemma \ref{lem.spectral.projection} and  $\lambda(\tau)=-\mu_s^2<0$.
        \item For $|\tau| \in (\tau_s, \tau^*)$, we are in case {\upshape(C)} of Lemma \ref{lem.spectral.projection}. 
    \end{enumerate}
\end{prop}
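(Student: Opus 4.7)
The plan is to combine the geometric analysis of the function $f$ from Lemma \ref{lem.C} with the algebraic-multiplicity bookkeeping provided by Lemma \ref{lem.spectral.projection}. The starting point is to pin down $\mu_s$. Since $f$ is continuous on $\bar J$, vanishes on $\partial J$, is positive on $J$, and is strictly concave on $J$, it attains a unique maximum at some interior point $\mu_s \in J$. Differentiating the square $f(\mu)^2 = 2\mu^3\cot\mu + \mu^2 - \mu^4$, setting $(f^2)'(\mu_s)=0$, and dividing by $2\mu_s > 0$ yields exactly equation \eqref{eq.maximum}. This determines $\mu_s$, and $\tau_s = f(\mu_s)$ is then the value in \eqref{taus}.

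Strict concavity of $f$ together with $f|_{\partial J}=0$ gives the crucial counting: the level set $\{\mu \in \bar J : f(\mu) = |\tau|\}$ contains exactly two points when $|\tau| \in (0,\tau_s)$, exactly one point (namely $\mu_s$) when $|\tau| = \tau_s$, and no points when $|\tau| \in (\tau_s, \tau^*)$. By Lemma \ref{lem.C}, the number of real eigenvalues of $-\calA_\tau$ in $(\lambda_2(0)-\tau^*, 0]$ equals this count; by Corollary \ref{c.sais0} every such eigenvalue is strictly negative, and via $\mu \in J$ every such eigenvalue is strictly greater than $\lambda_2(0)$.

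To close the classification I invoke Lemma \ref{lem.spectral.projection}: the spectral projection over $\gamma$ has rank $2$, so the total algebraic multiplicity of eigenvalues of $-\calA_\tau$ in $\bbH$ is exactly $2$, and the trichotomy (A)--(C) applies. In the regime $|\tau| \in (0,\tau_s)$, the two distinct real eigenvalues in $(\lambda_2(0),0)$ already exhaust the multiplicity budget, so each is algebraically simple and we are in case (A), with $\spb(-\calA_\tau)$ equal to the larger of the two. In the regime $|\tau| = \tau_s$, the single real eigenvalue $-\mu_s^2 < 0$ must absorb the full multiplicity budget, because any additional eigenvalue in $\bbH$ would, by reality of $\calA_\tau$, arrive with its complex conjugate and push the multiplicity sum to at least three; this forces case (B). In the regime $|\tau| \in (\tau_s, \tau^*)$, no real eigenvalues in $\bbH$ are available, yet the rank-$2$ projection forces $\sigma(-\calA_\tau) \cap \bbH$ to be nonempty; hence the two eigenvalues must form a complex-conjugate pair of algebraically simple eigenvalues, which is case (C).

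I do not anticipate a serious obstacle here. The analytic input (strict concavity of $f$, vanishing on $\partial J$) is already provided by Lemma \ref{lem.C}; the structural input (trichotomy and rank-$2$ spectral projection on $\bbG$) is already set up in Lemma \ref{lem.spectral.projection}. The only step requiring a small computation is the derivation of \eqref{eq.maximum} from $(f^2)'(\mu_s) = 0$, and the remainder of the proof is a pigeonhole argument on the two-dimensional spectral projection combined with the reality of $\calA_\tau$.
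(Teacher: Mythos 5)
Your proposal is correct and follows essentially the same route as the paper: derive \eqref{eq.maximum} from $(f^2)'(\mu_s)=0$ using positivity of $f$ on $J$, and then exploit strict concavity of $f$ with $f|_{\partial J}=0$ to count the real eigenvalues in $\bbS$ as $2$, $1$, or $0$, identifying the cases (A), (B), (C) of Lemma \ref{lem.spectral.projection} accordingly. You spell out the rank-two spectral projection ``pigeonhole'' step a bit more explicitly than the paper does, but the argument is the same.
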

\begin{proof}
As $|\tau|<\tau^*$, it follows from Lemma \ref{lem.C} that $\lambda$ is a real eigenvalue of $-\calA_\tau$ if and only if
$\mu=\sqrt{-\lambda}$ solves $f(\mu)=\tau$.
Since $f$ is strictly concave, it has a unique maximum at which $f(\mu)=\tau$ has exactly one solution. 
This maximum can be found by setting $f'(\mu)=0$. Noting that $f(\mu)>0$ on $J$, we can equivalently solve
\[ 0=2f'(\mu)f(\mu) = (f^2)'(\mu) =
-2\mu(-1 + 2 \mu^2 - 3 \mu \cot(\mu) + \mu^2 \csc^2(\mu))=(f^2)'(\mu).
\] 
Thus the equation $f'(\mu)=0$ is equivalent to \eqref{eq.maximum} and we see that for $\mu=\mu_s$ we 
are in case (B) of Lemma \ref{lem.spectral.projection}. For $0<|\tau|<\tau_s$, there are exactly two real solutions of
$f(\mu) = |\tau|$ and we are in case (A) of Lemma \ref{lem.spectral.projection}. 
If $\tau^*>|\tau|>\tau_s$, there are no real eigenvalues, so we have to be in case (C). 
\end{proof}

In a final step, we investigate for which $|\tau| \in (0,\tau_s)$ it is possible to choose a strictly positive eigenfunction. 
It will turn out that this is only true for $|\tau|$ up to a slightly smaller threshold $\tau_p < \tau_s$.
\begin{prop}\label{prop.taup}
    Let $\mu_p$ be the smallest strictly positive solution of $\cot\mu = \mu$ {\upshape(}i.e  $\mu \approx 0.86033${\upshape)} and 
    \begin{equation}\label{taup}
        \tau_p:=f(\mu_p)=\sqrt{\mu_p^2+\mu_p^4}=\frac{\mu_p}{\sin(\mu_p)}\approx 1.1349.
    \end{equation} 
    Then $\tau_p<\tau_s$.
    Recall from Proposition \ref{p.cp} that for $|\tau|\in (0, \tau_s)$, we have $\spb(-\calA_\tau) \in \sigma(-\calA_\tau)$ and the
    corresponding eigenspace $\mathrm{Eig}(-\calA_\tau, \spb(-\calA_\tau)) = \mathrm{span}\{u_0\}$ is one-dimensional. Then the following is true:
    \begin{enumerate}[\upshape (a)]
        \item For $|\tau|\in (0, \tau_p)$, we can choose $u_0$ strictly positive on $[0,1]$.
        \item For $|\tau| = \tau_p$, we can choose $u_0$ positive on $[0,1]$ and strictly positive on $(0,1)$, but $u_0(x) = 0$ for some $x\in \Gamma$.
        \item For $|\tau| \in (\tau_p, \tau_s)$, $u_0$ changes sign.     
        \end{enumerate}
\end{prop}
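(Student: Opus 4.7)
The plan is to compute explicitly the eigenfunction $u_0$ spanning $\mathrm{Eig}(-\calA_\tau, \spb(-\calA_\tau))$ for $\tau \in (0, \tau_s)$ and read off its sign structure from an elementary trigonometric formula; the case $\tau < 0$ will then follow via a reflection symmetry on $[0,1]$.

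First I would establish $\tau_p < \tau_s$. The identity $\cot\mu_p = \mu_p$ gives $\csc^2\mu_p = 1 + \mu_p^2$ and $\sin^2\mu_p = 1/(1+\mu_p^2)$, which instantly produces the three expressions claimed for $\tau_p$. Because $\mu_s$ is the unique interior maximizer of $f$ on $J$, the inequality $\tau_p < \tau_s$ reduces to $\mu_p < \mu_s$. Substituting $\cot\mu_p = \mu_p$ and $\csc^2\mu_p = 1 + \mu_p^2$ into the identity $(f^2)'(\mu) = 2\mu(1 - 2\mu^2 + 3\mu\cot\mu - \mu^2\csc^2\mu)$ from the proof of Proposition~\ref{prop.taus} yields $(f^2)'(\mu_p) = 2\mu_p(1 - \mu_p^4)$. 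Since $\cot$ is strictly decreasing on $(0,\pi/2)$ with $\cot 1 = \cos 1/\sin 1 < 1$, we have $\mu_p < 1$, whence $(f^2)'(\mu_p) > 0$; this puts $\mu_p$ strictly to the left of the maximizer, so $\mu_p < \mu_s$.

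Next I would compute the eigenfunction. For $\tau \in (0, \tau_s)$ set $\mu := \mu_1(\tau) \in (0, \mu_s)$, so $\spb(-\calA_\tau) = -\mu^2 < 0$. Every solution of $u'' = -\mu^2 u$ has the form $u(x) = \alpha\cos(\mu x) + \beta\sin(\mu x)$; the first boundary condition in \eqref{eq:eigenvalue1D} reads $\alpha(\mu^2 + \tau\cos\mu) + \beta(\mu + \tau\sin\mu) = 0$, so normalizing $\alpha = 1$ gives $\beta = -C$ with $C := (\mu^2 + \tau\cos\mu)/(\mu + \tau\sin\mu)$. Since $\mu \in (0, \pi/2)$ and $\tau > 0$, both numerator and denominator of $C$ are strictly positive; in particular $C > 0$. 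Hence $u_0'(x) = -\mu(\sin(\mu x) + C\cos(\mu x)) < 0$ on $[0,1]$, so $u_0$ is strictly decreasing from $u_0(0) = 1$ down to $u_0(1) = \mu(\cos\mu - \mu\sin\mu)/(\mu + \tau\sin\mu)$, whose sign equals that of $\cot\mu - \mu$.

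Because $f$ is strictly concave on $J$ with $f(0) = 0$ and maximum at $\mu_s$, the map $\tau \mapsto \mu_1(\tau)$ is a strictly increasing bijection $(0, \tau_s] \to (0, \mu_s]$; combined with $f(\mu_p) = \tau_p$, this forces $\mu_1(\tau) < \mu_p$, $= \mu_p$, or $> \mu_p$ exactly when $\tau < \tau_p$, $= \tau_p$, or $> \tau_p$, respectively. Plugging into the sign analysis above then produces the trichotomy (a), (b), (c) for $\tau > 0$. For $\tau < 0$, I would observe (by a direct check against \eqref{eq:eigenvalue1D}) that $u \mapsto \tilde u := u(1-\cdot)$ is a bijection between eigenfunctions of $-\calA_\tau$ and eigenfunctions of $-\calA_{-\tau}$ at the same eigenvalue; this reflection preserves positivity on $[0,1]$ and sign changes on $(0,1)$ while only swapping which endpoint of $\Gamma$ carries a zero, so (a)--(c) persist for $\tau < 0$. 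The main obstacle, and really the heart of the proof, is the sign analysis of $u_0(1)$: it rests on the positivity of $C$ (which renders $u_0$ monotone) together with the identification of $\mu_1(\tau)$ as the smaller root of $f(\mu) = \tau$. Once these two facts are in place the trichotomy is immediate.
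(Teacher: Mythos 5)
Your proof is correct, and it departs from the paper's argument in three welcome ways that are worth recording.

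First, you prove $\tau_p<\tau_s$ analytically by substituting $\cot\mu_p=\mu_p$, $\csc^2\mu_p=1+\mu_p^2$ into $(f^2)'(\mu)=2\mu(1-2\mu^2+3\mu\cot\mu-\mu^2\csc^2\mu)$ to obtain $(f^2)'(\mu_p)=2\mu_p(1-\mu_p^4)>0$ (since $\cot 1<1$ forces $\mu_p<1$), hence $\mu_p$ lies strictly to the left of the maximizer $\mu_s$. The paper instead just compares the numerical values $\mu_p\approx 0.86033$ and $\mu_s\approx 0.9307$; your derivation is tighter and closer to a proof in the strict sense. Second, you replace the paper's analysis of the zero via the equation $\tan(\mu x)=\frac{\mu+f(\mu)\sin\mu}{\mu^2+f(\mu)\cos\mu}$ with the observation that $C=\frac{\mu^2+\tau\cos\mu}{\mu+\tau\sin\mu}>0$ (for $\mu\in(0,\pi/2)$, $\tau>0$) makes $u_0'(x)=-\mu(\sin(\mu x)+C\cos(\mu x))<0$ on $[0,1]$, so $u_0$ is strictly decreasing from $u_0(0)=1$ and the entire trichotomy is read off from the sign of $u_0(1)\propto\cot\mu-\mu$. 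This is a cleaner reduction, as it avoids a separate discussion of whether the zero of the tangent equation falls in $(0,1)$. Third, instead of repeating the computation for $\tau<0$ with a second eigenfunction $v^-$ (as the paper does), you use the involution $u\mapsto u(1-\cdot)$, which one checks maps the eigenvalue problem for $-\calA_\tau$ to that for $-\calA_{-\tau}$ at the same $\lambda$; this preserves positivity and sign-change properties and merely swaps the boundary point where a zero can occur, so cases (a)--(c) transfer immediately. One could indeed verify that $v^+(1-\cdot)$ is a scalar multiple of the paper's $v^-$, so your symmetry argument subsumes the paper's second case.

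Two small points that deserve an explicit word (both are easy): (1) normalizing $\alpha=1$ is legitimate because if $\alpha=0$ then the first boundary condition forces $\beta(\mu+\tau\sin\mu)=0$ with $\mu+\tau\sin\mu>0$, so $\beta=0$; thus any nontrivial eigenfunction has $\alpha\neq 0$. (2) Having fixed $\alpha=1$, the first boundary condition determines $\beta=-C$; the second boundary condition is then automatically satisfied because $-\mu^2$ is a genuine eigenvalue, i.e.\ $\det M_w=0$, so the two boundary conditions are dependent. Neither gap affects the validity of your argument, but making them explicit rounds out the proof.
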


\begin{proof} 
Let $\mu_p$ be the smallest positive solution of $\cot(\mu)=\mu$. Then, approximately,  $\mu_p \approx 0.86033<\mu_s$. 
Moreover, if $|\tau_p|=f(\mu_p)$, then  $-\mu_p^2 = \lambda_1(\tau_p) = \spb(-\calA_{\tau_p})$.
Note that
\begin{align*}
    f(\mu_p) &=\sqrt{2\mu_p^3\cot\mu_p + \mu_p^2-\mu_p^4}=\sqrt{\mu_p^4+\mu_p^2}\\
    &=\sqrt{\mu_p^2(1+\cot^2(\mu_p))}=\frac{\mu_p}{\sin(\mu_p)} \approx 1.13491<\tau_s.
\end{align*} 

It turns out that (unlike the eigenvalues) the eigenfunctions do depend on the sign of $\tau$, so we make a case distinction. For our means it suffices to calculate the eigenfunction corresponding to the spectral bound, which corresponds to the solution of 
$f(\mu)=|\tau|$ in the range $0\leq \mu \leq \mu_s$ (cf. Figure \ref{fig:tau}).

\smallskip

\emph{Case 1: $\tau>0$}.

Set \[v^+(x)=\cos(\mu x)-\frac{\tau \cos(\mu)+\mu^2}{\mu+\tau \sin(\mu)}\sin(\mu x).\] One can check that $v^+(x)$ is an eigenfunction to the eigenvalue $-\mu^2$ of the operator $-\calA_\tau$ for $\tau=f(\mu)$. 
Note that $\mu+\tau\sin(\mu)>0$ for $\mu \in J$. 

At $\tau=\tau_p$ we have $\tau_p \sin(\mu_p)=\mu_p$ and $\cos(\mu_p)=\mu_p \sin(\mu_p)$, thus the eigenfunction to $\spb(-\calA_{\tau_p})=-\mu_p^2$ is given by  \[v^+(x)=\cos(\mu_p x)-\mu_p\sin(\mu_p x),\] which is a strictly positive function on $[0,1)$ with a zero in $x=1$. This proves (b).

More generally for any $\mu \in J$ the function $v^+$ has a zero if and only if $\tan(\mu x)=\frac{\mu+f(\mu)\sin(\mu)}{\mu^2+f(\mu)\cos(\mu)}$.
As $x\mapsto \tan (\mu x)$ is a strictly increasing function that maps $[0, \frac{\pi}{2\mu})$ onto $[0,\infty)$, for fixed
$\mu$ this equation has a unique solution $x_\mu$ in $(0, \frac{\pi}{2\mu})$. We note that $x_\mu \in (0,1)$ if and only if $\tan(\mu)> \frac{\mu + f(\mu)\sin(\mu)}{\mu^2+f(\mu)\cos(\mu)}$; and the latter can be shown to be equivalent to $\mu> \mu_p$. This proves (a) and (c).\smallskip

\emph{Case 2: $\tau<0$}.

For negative $\tau$, we observe that
\[v^-(x)=\sin(\mu x)-\frac{\mu^2\sin(\mu)-\mu \cos \mu}{\mu\sin(\mu)+\mu^2\cos(\mu)-\tau}\cos(\mu x)\] 
is an eigenfunction of $-\calA_\tau$ for the eigenvalue $-\mu^2$, where $\tau=-f(\mu)$. Note that the denominator is strictly positive for $\mu \in J$.

The question of a sign change of the first eigenfunction reduces to whether 
\begin{equation}
    \label{eq.help}
    \tan(\mu x)=\frac{\mu^2 \sin(\mu)-\mu \cos(\mu)}{\mu \sin(\mu)+\mu^2 \cos(\mu)+f(\mu)},
    \end{equation} 
occurs for $x \in [0,1]$.
For $0<\mu<\mu_p$, the right hand side of \eqref{eq.help} is negative, so there is no equality in \eqref{eq.help} for $x\in [0,1]$.
For $\mu = \mu_p$, it is $v^-(x) = \sin(\mu_p x)$, which has a zero in $x=0$. Thus, we have proved (a) and (b). To prove (c),
note that if $\mu>\mu_p$, the right hand side of \eqref{eq.help} is strictly positive and 
\[ 
\tan(\mu)>\frac{\mu^2 \sin(\mu)-\mu \cos(\mu)}{\mu \sin(\mu)+\mu^2 \cos(\mu)+f(\mu)}.
\] 
By continuity, \eqref{eq.help} has a solution $x\in (0,1)$.
\end{proof}

Now we can prove the main result of this section.
\begin{proof}[Proof of Theorem \ref{t.ev.pos}]
(a). If $\calT$ is positive, then $\tau B_{22}$ satisfies the positive minimum principle by Theorem \ref{t.characterization}. However, 
for $\binom{1}{0}, \binom{0}{1} \in \R_+^2$, we have $\langle \binom{1}{0}, \binom{0}{1}\rangle = 0$, 
$\langle \tau B_{22} \binom{1}{0},\binom{0}{1}\rangle=-\tau$, and $\langle \tau B_{22} \binom{0}{1},\binom{1}{0}\rangle=\tau$. 
Both values are positive only if $\tau=0$. This proves necessity of $\tau=0$. That $\calT$ is positive for $\tau=0$ follows immediately
from Theorem \ref{t.ev.pos}.

To prove (b), first observe that Theorem \ref{t.hoelder} implies the smoothing condition \eqref{eq.smoothing-Eu}. 
 Propositions \ref{p.spectrum}(a), \ref{prop.taus}(a), and \ref{prop.taup}(a) show that condition (iii) from Theorem \ref{t.eventual-pos-characterize} is satisfied for the operator $-\calA_\tau$. 
As the respective eigenfunctions are continuous and have no zeros, they can be chosen to satisfy $v \geq \delta\one$. 
The conditions on the dual semigroup follow from the fact that $-\calA_\tau^*=-\calA_{-\tau}$ 
and that we can choose $\psi$ as the strictly positive element in $\ker(\spb(-\calA_\tau)I+\calA_{-\tau})$.

(c) can be deduced by showing that in all sub-cases at least one of the conditions from Theorem \ref{t.eventual-pos-characterize}(iii) is violated. In the sub-cases (i) and (ii) we use Proposition \ref{prop.taus}(a) and Proposition \ref{prop.taup}(b) and (c). For (iii) and (iv) we use the sub-cases (b) and (c) of Proposition \ref{prop.taus}, respectively. 
\end{proof}

\appendix

\section{Bounded perturbations of \texorpdfstring{weak$^*$-}{weak star }semigroups}

Throughout this appendix, let $M$ be a compact, separable metric space
and $\mu$ be a finite Borel measure on $M$ such that $\mu(B(x, \eps)) >0$ for every $x\in M$ and $\eps>0$.
We are interested
in the space $L^\infty(M, \mu)$. We start with a characterization
of adjoint operators.

\begin{lem}
    \label{l.adjoint}
    Let $T\in \calL(L^\infty(M, \mu))$. Then $T$ is an adjoint operator, i.e.\ there is some $\tilde T\in \calL(L^1(M, \mu))$
    with $\tilde T^*=T$, if and only if whenever $(f_n)\subset L^\infty(M, \mu)$ is a bounded sequence with $f_n \to f$ pointwise almost
    everywhere, it follows that $Tf_n \weakstar Tf$.
\end{lem}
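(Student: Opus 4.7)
The forward direction follows immediately from dominated convergence: if $T=\tilde T^*$ for some $\tilde T\in\calL(L^1(M,\mu))$ and $f_n\to f$ pointwise $\mu$-almost everywhere with $(f_n)$ bounded in $L^\infty$, then for every $g\in L^1(M,\mu)$ the function $\tilde T g$ lies in $L^1(M,\mu)$, so
\[
\langle g, Tf_n\rangle=\langle \tilde T g, f_n\rangle\to\langle \tilde T g, f\rangle=\langle g, Tf\rangle
\]
by dominated convergence, which is precisely $Tf_n\weakstar Tf$.

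For the converse, my plan is to construct $\tilde T\in\calL(L^1(M,\mu))$ directly by duality. For each $g\in L^1(M,\mu)$ consider the bounded linear functional $\varphi_g\in(L^\infty(M,\mu))^*$ defined by $\varphi_g(f):=\langle g, Tf\rangle$. If I can show that $\varphi_g$ is represented by an element of $L^1(M,\mu)$---which I will then call $\tilde T g$---then the map $g\mapsto \tilde T g$ inherits linearity from the bilinearity of the duality bracket, it is bounded since $\|\tilde T g\|_1=\sup_{\|f\|_\infty\leq 1}|\langle g, Tf\rangle|\leq \|T\|\|g\|_1$, and the identity $\tilde T^*=T$ holds by construction.

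The task therefore reduces to the following classical characterization of the predual inside the bidual: a functional $\varphi\in(L^\infty(M,\mu))^*$ is represented by an element of $L^1(M,\mu)$ if and only if $\varphi(h_n)\to\varphi(h)$ for every bounded sequence $(h_n)\subset L^\infty(M,\mu)$ converging to $h$ pointwise $\mu$-almost everywhere. Granted this, the hypothesis of the lemma yields immediately that each $\varphi_g$ is of this form. I expect this characterization to be the only substantive step, and I would prove its nontrivial direction as follows: splitting into real and imaginary parts, define the set function $\nu(E):=\varphi(\one_E)$ on measurable subsets of $M$; applying the hypothesis to $\one_{E_n}$ with $E_n\downarrow\emptyset$ gives countable additivity of $\nu$, and the fact that $\varphi$ annihilates the equivalence class of any $\mu$-null set gives $\nu\ll\mu$. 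The Radon--Nikodym theorem then produces a density $\tilde g\in L^1(M,\mu)$ with $\varphi(\one_E)=\int\one_E\,\tilde g\,d\mu$, and this representation extends to all of $L^\infty(M,\mu)$ first to simple functions by linearity and then to bounded measurable functions via uniform approximation by simple functions. No deeper obstacle arises; the lemma is essentially a translation between weak$^*$-continuity of $T$ and its sequential version along dominated pointwise convergence.
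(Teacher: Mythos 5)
Your proof is correct and takes essentially the same route as the paper: the forward direction via dominated convergence, and the converse by examining the set function $A\mapsto\langle g,T\one_A\rangle$, proving countable additivity from the hypothesis, noting absolute continuity with respect to $\mu$, and applying Radon--Nikodym. The paper establishes countable additivity via an increasing union of disjoint sets while you use $E_n\downarrow\emptyset$; these are interchangeable, and your extra remark about extending from simple functions to all of $L^\infty$ by uniform density is implicit in the paper's identification $T^*g=h$.
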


\begin{proof}
    If $T=\tilde T^*$ and $(f_n)$ is a uniformly bounded sequence that
    converges to $f$ almost everywhere, then for $g\in L^1(M, \mu)$
    we have
    \[
    \langle g, Tf_n\rangle = \langle \tilde Tg, f_n\rangle \to \langle \tilde T g, f\rangle = \langle g, Tf\rangle
    \]
    by dominated convergence.\smallskip

   Conversely, assume that $T$ satisfies the stated continuity condition. Let $g\in L^1(M, \mu)$. We claim that $T^*g \in L^1(M, \mu)$ where we identify $L^1(M, \mu)$ canonically with a closed
    subspace of $L^\infty(M, \mu)^*$. To see this, put $\nu(A) = \langle T^*g, \one_A\rangle$. If
    $(A_n)_{n\in \N}$ is a sequence of pairwise disjoint Borel sets,
    we define $f_n \coloneq \one_{\bigcup_{k=1}^n A_n}$ and $f \coloneqq \one_{\bigcup_{k=1}^\infty A_k}$. Then the sequence
    $f_n$ is uniformly bounded and converges to $f$ almost everywhere.
    By assumption,
    \[
    \nu\Big(\bigcup_{k=1}^\infty A_k\Big) = \langle g, Tf\rangle = 
    \lim_{n\to \infty}\langle g, Tf_n\rangle = \lim_{n\to \infty}
    \sum_{k=1}^n\nu(A_k).
    \]
    This proves that $\nu$ is a Borel measure. As clearly $\nu\ll\mu$,
    there is a function $h\in L^1(M, \mu)$ with $\dd\nu = h\dd\mu$.
    This proves $T^*g=h\in L^1(M, \mu)$.

    Setting $\tilde T\coloneqq T^*|_{L^1(M, \mu)}$, it follows that
    $\Tilde T^*=T$.
\end{proof}

\begin{defn}
    \label{def.ws}
    A \emph{weak$^*$}-semigroup on $L^\infty(M, \mu)$
    is a family $(T(t))_{t\geq 0}$ of bounded linear operators on $(L^\infty(M,\mu))$ such that
    \begin{enumerate}[label=\upshape (\roman*)]
        \item $T(0) = I$ and $T(t+s) = T(t)T(s)$ for all $t,s\geq 0$;
        \item every operator $T(t)$ is an adjoint operator;
        \item for every $f\in L^\infty(M, \mu)$ the orbit $t\mapsto T(t)f$
        is weak$^*$-continuous.
    \end{enumerate}
    The \emph{weak$^*$}-generator $A$ of $(T(t))_{t\geq 0}$ is defined by
    \[
    Af \coloneqq \mathrm{weak}^*-\lim_{t\to 0}\frac{1}{h}(T(h)f-f),
    \]
    on the domain $D(A)$, consisting of all $f$ for which this limit exists.
\end{defn}

We show next that a weak$^*$-semigroup is just the adjoint of a strongly continuous semigroup
on $L^1(M,\mu)$. For more information on adjoint semigroups, we refer to \cite{vN92}.

\begin{lem}
    \label{l.adjoint-semigroup}
    Let $(T(t)_{t\geq 0}$ be a weak$^*$-semigroup. Then there exists a strongly continuous
    semigroup $(\tilde T(t))_{t\geq 0}$ on $L^1(M, \mu)$ such that $\tilde T(t)^* = T(t)$ for 
    all $t\geq 0$, i.e.\ $(T(t))_{t\geq 0}$ is an \emph{adjoint semigroup}.
    If $\tilde A$ denotes the generator of $\tilde T$, then $A =\tilde A^*$ is the weak$^*$-generator of $T$.
\end{lem}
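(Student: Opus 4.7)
My plan is to construct $\tilde T(t)$ pointwise in $t$ using Lemma~\ref{l.adjoint}, then promote the properties of $T$ into properties of $\tilde T$ by duality, and finally invoke the standard correspondence between a $C_0$-semigroup's generator and its adjoint.

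\textbf{Construction and semigroup law.} For each $t \ge 0$, Lemma~\ref{l.adjoint} (applied in view of Definition~\ref{def.ws}(ii)) produces an operator $\tilde T(t) \in \calL(L^1(M,\mu))$ with $\tilde T(t)^* = T(t)$. Since the canonical embedding $L^1(M,\mu) \hookrightarrow L^\infty(M,\mu)^*$ is isometric, this preadjoint is unique. From $T(t+s) = T(t)T(s)$ I take preadjoints using $(AB)^* = B^*A^*$ together with uniqueness to obtain $\tilde T(t+s) = \tilde T(s)\tilde T(t)$, and symmetrically also $\tilde T(t)\tilde T(s)$; in particular $\tilde T(0) = I$.

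\textbf{From weak$^*$ to strong continuity.} For $g \in L^1$ and $f \in L^\infty$, the identity $\langle \tilde T(t)g, f \rangle = \langle g, T(t)f\rangle$ combined with Definition~\ref{def.ws}(iii) shows that $t \mapsto \tilde T(t)g$ is weakly continuous in $L^1(M,\mu)$. The orbits $\{\tilde T(t)g : t \in [0,1]\}$ are thus weakly bounded, hence norm bounded, so the Banach--Steinhaus theorem yields $\sup_{t\in [0,1]}\|\tilde T(t)\| < \infty$. Since $L^1(M,\mu)$ is separable (as $M$ is separable metric), weak continuity implies strong measurability of $s \mapsto \tilde T(s)g$, and so the Bochner integral
\begin{equation*}
y_h \coloneqq \frac{1}{h}\int_0^h \tilde T(s)g\, \mathrm{d}s
\end{equation*}
is well defined for each $h > 0$. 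Weak continuity at $0$ gives $y_h \to g$ weakly in $L^1$ as $h \searrow 0$, so by Mazur's theorem $g$ lies in the norm closure of $\{y_h : h > 0\}$. A direct computation $\tilde T(t)y_h - y_h = \frac{1}{h}\bigl(\int_h^{t+h} - \int_0^t\bigr) \tilde T(s)g\, \mathrm{d}s$ shows that each $y_h$ has norm-continuous orbit at $0$; combined with the uniform bound on $\|\tilde T(t)\|$ and a standard $3\varepsilon$-argument, the set of $g$ whose orbit is norm-continuous at $0$ is closed, hence equal to $L^1(M,\mu)$. Strong continuity at positive times then follows from the semigroup law and local boundedness. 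This is the step I expect to be the main obstacle — not the individual ingredients, but assembling them carefully enough to conclude strong continuity from weak continuity without reflexivity.

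\textbf{Identification of the generator.} Denote by $\tilde A$ the $C_0$-generator of $\tilde T$ obtained in the previous step. For $g \in D(\tilde A)$ and $f \in L^\infty$ I use
\begin{equation*}
\left\langle g,\, \tfrac{1}{h}(T(h)f - f)\right\rangle = \left\langle \tfrac{1}{h}(\tilde T(h)g - g),\, f\right\rangle \xrightarrow{h\to 0} \langle \tilde A g, f\rangle,
\end{equation*}
which shows $A \subseteq \tilde A^*$: whenever the weak$^*$-limit defining $Af$ exists, the pairing on the left converges, so $f \in D(\tilde A^*)$ with $\tilde A^* f = Af$. For the converse inclusion $\tilde A^* \subseteq A$, I take $f \in D(\tilde A^*)$ and $g \in L^1$, write $\tilde T(h)g - g = \int_0^h \tilde T(s) g_n\, \mathrm{d}s$ for approximants $g_n \to g$ with $g_n \in D(\tilde A)$ (density of $D(\tilde A)$), and use the estimates $|\langle \tilde T(s)\tilde A g_n, f\rangle| = |\langle \tilde A g_n, T(s)f\rangle|$ along with the Hille--Yosida-type description of $\tilde A^*$ to show that $\frac{1}{h}(T(h)f - f)$ is weak$^*$-convergent to $\tilde A^* f$. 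Together these give $A = \tilde A^*$, which completes the proof.
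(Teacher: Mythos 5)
Your overall strategy mirrors the paper's: construct $\tilde T(t)$ via preadjoints, deduce strong continuity of $\tilde T$ from weak continuity, and identify the weak$^*$-generator of $T$ with $\tilde A^*$. The difference is that where you reprove the two key facts from scratch, the paper simply cites \cite{en00}: Theorem I.5.8 there says that a semigroup with weakly continuous orbits is strongly continuous (which is essentially the Bochner-average and Mazur argument you lay out), and \S II.2.5 there is the statement that the weak$^*$-generator of an adjoint semigroup is the Banach-space adjoint of the generator of the pre-dual semigroup. So yours is a more self-contained version of the same route, which is perfectly reasonable.

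Two points should be tightened. First, in the weak-to-strong continuity step, Mazur's theorem places $g$ in the \emph{norm closure of the convex hull} of $\{y_h : h>0\}$, not in the norm closure of $\{y_h\}$ itself; this does not affect the conclusion because the set of $g$ whose orbit is norm-continuous at $0$ is a (closed) linear subspace, hence automatically contains that convex hull, but the statement as written is off. Second, and more seriously, the displayed identity in the inclusion $\tilde A^* \subseteq A$ is wrong as written: $\tilde T(h)g - g = \int_0^h \tilde T(s)g_n\,\mathrm{d}s$ makes no sense (the left side does not depend on $n$, and the integrand is missing the generator). What you want is, for $g_n\in D(\tilde A)$ and $f\in D(\tilde A^*)$,
\begin{equation*}
\langle \tilde T(h)g_n - g_n, f\rangle = \int_0^h \langle \tilde T(s)\tilde A g_n, f\rangle\,\mathrm{d}s
= \int_0^h \langle \tilde T(s)g_n, \tilde A^* f\rangle\,\mathrm{d}s,
\end{equation*}
where the second equality uses $\tilde T(s)g_n \in D(\tilde A)$ and $f\in D(\tilde A^*)$. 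Letting $n\to\infty$ (using local uniform boundedness of $\tilde T$) and then dividing by $h$ and letting $h\searrow 0$ gives $\langle g, \frac1h(T(h)f - f)\rangle \to \langle g, \tilde A^*f\rangle$ for all $g$, which is the desired weak$^*$-convergence. Your appeal to a ``Hille--Yosida-type description of $\tilde A^*$'' is too vague to stand in for this computation, so this step needs the correction above.
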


\begin{proof}
    If $\tilde T(t)$ is such that $\tilde T(t)^*=T(t)$, then $(\tilde T(t))_{t\geq 0}$ clearly satisfies the semigroup law. 
    Moreover, as the orbits of $T$ are weak$^*$-continuous, the orbits of $\tilde T$ are weakly continuous. 
    By \cite[Theorem I.5.8]{en00}, $\tilde T$ is strongly continuous. The statement about the weak$^*$-generator
    follows from \cite[\S II.2.5]{en00}.
\end{proof}

\begin{prop}
    \label{p.boundedperturbation}
    Let $(T(t))_{t\geq 0}$ be a weak$^*$-semigroup with weak$^*$-generator $A$ and let
    $B\in \calL(L^\infty(M, \mu))$ be an adjoint operator. Then $A+B$ is the weak$^*$-generator
    of a weak$^*$-semigroup $(S(t))_{t\geq 0}$. Moreover, it holds that
    \begin{equation}
    \label{eq.duhamel}
    S(t)f = T(t)f +\int_0^t T(t-s)BS(s)f\, ds
    \end{equation}
    for all $f\in L^\infty(M,\mu)$ and $t\geq 0$. Here, the integral in \eqref{eq.duhamel}
    has to be understood as a weak$^*$-integral. Finally, if $T$ is contractive and $B$ is dissipative
    then also $S$ is contractive.
\end{prop}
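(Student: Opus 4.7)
The plan is to transfer the problem to $L^1(M,\mu)$ via Lemma~\ref{l.adjoint-semigroup}, apply the classical bounded perturbation theorem for strongly continuous semigroups, and then dualize. Let $\tilde{T}$ be the strongly continuous semigroup on $L^1(M,\mu)$ with $\tilde{T}(t)^{*}=T(t)$ given by Lemma~\ref{l.adjoint-semigroup}, and denote its generator by $\tilde{A}$, so that $A=\tilde{A}^{*}$. Since $B$ is an adjoint operator, there is (a unique) $\tilde{B}\in\calL(L^1(M,\mu))$ with $\tilde{B}^{*}=B$. By the standard bounded perturbation theorem \cite[Theorem III.1.3]{en00}, $\tilde{A}+\tilde{B}$ generates a strongly continuous semigroup $\tilde{S}=(\tilde{S}(t))_{t\geq 0}$ on $L^1(M,\mu)$. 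Setting $S(t)\coloneqq \tilde{S}(t)^{*}$ and invoking Lemma~\ref{l.adjoint-semigroup} again yields a weak$^*$-semigroup on $L^\infty(M,\mu)$, whose weak$^*$-generator is $(\tilde{A}+\tilde{B})^{*}=\tilde{A}^{*}+\tilde{B}^{*}=A+B$ (the last equality because $\tilde{B}$ is bounded).

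For the Duhamel formula, I would start from the ``right'' variation of constants identity on $L^1(M,\mu)$,
\[
\tilde{S}(t)g = \tilde{T}(t)g + \int_{0}^{t}\tilde{S}(t-s)\tilde{B}\tilde{T}(s)g\,ds, \qquad g\in L^1(M,\mu),
\]
which is part of the bounded perturbation theory cited above. For fixed $f\in L^\infty(M,\mu)$ and $g\in L^1(M,\mu)$, pairing both sides with $f$ and using $\tilde{T}(s)^{*}=T(s)$, $\tilde{S}(t-s)^{*}=S(t-s)$ and $\tilde{B}^{*}=B$ gives
\[
\langle g, S(t)f\rangle = \langle g, T(t)f\rangle + \int_{0}^{t} \langle g, T(s)BS(t-s)f\rangle\,ds,
\]
and the substitution $s\mapsto t-s$ yields \eqref{eq.duhamel}, with the integral interpreted as a weak$^*$-integral: the integrand $s\mapsto T(t-s)BS(s)f$ is weak$^*$-continuous (being a composition of the weak$^*$-continuous orbits with the bounded adjoint operator $B$), and the formula above shows that its pairing with any $g\in L^1$ is Lebesgue-integrable with the stated value.

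Finally, assume $T$ is contractive and $B$ is dissipative. Since $\|\tilde{T}(t)\|=\|T(t)\|\leq 1$, the semigroup $\tilde{T}$ is contractive, so $\tilde{A}$ is $m$-dissipative. Dissipativity of $B$ means $(e^{tB})_{t\geq 0}$ is contractive on $L^\infty(M,\mu)$; as $e^{t\tilde{B}}$ and $e^{tB}$ are adjoints of each other, also $\tilde{B}$ is dissipative on $L^1(M,\mu)$. The sum $\tilde{A}+\tilde{B}$ is then densely defined and dissipative, and by the standard perturbation result for contraction semigroups (e.g.\ \cite[Corollary III.2.5]{en00}) it generates a contraction semigroup. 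Hence $\|S(t)\|=\|\tilde{S}(t)\|\leq 1$ for all $t\geq 0$. The main delicate point in the whole argument is the careful bookkeeping of adjoints when passing the variation of constants formula through the duality, in particular to obtain the ``correct'' order $T(t-s)BS(s)$ in \eqref{eq.duhamel}; everything else reduces to invoking known results on $L^1(M,\mu)$.
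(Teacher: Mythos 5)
Your proposal is correct and follows essentially the same route as the paper: pass to the pre-adjoint semigroup on $L^1(M,\mu)$ via Lemma~\ref{l.adjoint-semigroup}, apply the bounded perturbation theorem there, dualize the resulting variation-of-constants formula, and transfer dissipativity of $B$ to $\tilde B$ via the equality of operator norms of adjoints. The only cosmetic difference is that you dualize the ``$\int_0^t \tilde{S}(t-s)\tilde{B}\tilde{T}(s)\,ds$'' form of the variation-of-constants identity and re-index, while the paper directly cites the Duhamel formula from \cite[Corollary III.1.7]{en00} and takes adjoints; your remark on weak$^*$-continuity of the integrand is a welcome extra detail the paper leaves implicit.
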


\begin{proof}
    Let $\tilde B\in \calL(L^1(M, \mu))$ be such that $\tilde B^*=B$. Moreover, let $(\tilde T(t))_{t\geq 0}$ be the strongly
    continuous semigroup on $L^1(M, \mu)$ such that $\tilde T(t)^*=T(t)$ and $\tilde A$ be the generator of $\tilde T$, see
    Lemma \ref{l.adjoint}. By \cite[Theorem III.1.3]{en00}, $\tilde A + \tilde B$ is the generator of a strongly continuous semigroup
    $(\tilde S(t))_{t\geq 0}$. The Duhamel formula \eqref{eq.duhamel} for $\tilde T$ and $\tilde S$ follows from \cite[Corollary III.1.7]{en00}.
    Taking adjoints, the claim follows. For the last statement first observe that if $B$ is a bounded, dissipative operator,
    then it is m-dissipative as some point on the positive real axis belongs to the resolvent set of $B$, see \cite[Proposition II.3.14]{en00}.
    But then it follows that its pre-adjoint $\tilde{B}$ is also m-dissipative. If $T$ is contrative, then so is $\tilde T$, and it follows
    from \cite[Proposition III.2.7]{en00} that the semigroup generated by $A+B$ is contractive.
\end{proof}

\begin{defn}
    \label{def.sf}
    Let $(T(t))_{t\geq 0}$ be a weak$^*$-semigroup and $X$ be a closed subspace of $C(M)$. 
    Then $T$ is called \emph{strong Feller semigroup with respect to $X$} if
    \begin{enumerate}[label=\upshape (\roman*)]
        \item $T(t)f\in X$ for every $f\in L^\infty(M, \mu)$ and $t>0$;
        \item for every $f\in X$ it holds that $T(t)f \to f$ with respect to $\|\cdot\|_\infty$ as $t\to 0$. 
    \end{enumerate}
\end{defn}

\begin{rem}
    Usually, a strong Feller operator is defined as a kernel operator on $B_b(M)$, the space of all bounded, measurable
    functions on $M$, that maps $B_b(M)$ to $C(M)$. However, if $q : B_b(M) \to L^\infty(M, \mu)$, denotes the quotient map that
    maps a bounded measurable function to its equivalence class modulo equality $\mu$-almost everywhere, 
    and if $T\in \calL(L^\infty(M, \mu))$ is an adjoint operator that takes values in $C(M)$, 
    then one can show that $T\circ q$ is a strong Feller operator in the classical sense. 
    In particular, the fact that $T$ is an adjoint operator implies that $T\circ q$ is a kernel operator.
    For more information we refer to \cite[Section 4.1]{dhk24}. We would also like to point out that if $T\in \calL(L^\infty(M, \mu))$
    is an adjoint operator taking values in $C(M)$, then $T_0\coloneqq T|_{C(M)}$ is weakly 
    compact and $T_0^2$ is compact, see \cite[Theorem 4.4]{dhk24}.
\end{rem}

The following Theorem could be obtained as a special case of a perturbation theorem for more general strong Feller semigroups,
see \cite[Theorem 3.3]{kunze13} (see in particular \cite[Example 3.4]{kunze13}) or \cite[Theorem 3.2]{kk22}. However, in our situation,
where we consider weak$^*$-semigroups, we can give an easier and direct proof.

\begin{thm}
    \label{t.perturb}
    Let $T=(T(t))_{t\geq 0}$ be a weak$^*$-semigroup with weak$^*$-generator $A$ and $B\in \calL(L^\infty(M, \mu))$ be an
    adjoint operator. Moreover, let $S=(S(t))_{t\geq 0}$ be the weak$^*$-semigroup generated by $A+B$. If $X$ is a closed subspace
    of $C(M)$ and $T$ is
    a strong Feller semigroup with respect to $X$, then so is $S$.
\end{thm}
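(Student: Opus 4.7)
The plan is to verify the two conditions in Definition~\ref{def.sf} for $S$. For condition~(i), I will use the Dyson--Phillips representation $S(t) = \sum_{n \ge 0} S_n(t)$, where $S_0(t) = T(t)$ and $S_{n+1}(t) f = \int_0^t T(t-s) B S_n(s) f \, ds$ (as weak$^*$ integrals in $L^\infty(M,\mu)$); this series arises as the adjoint of the ordinary Dyson--Phillips expansion of the pre-dual $C_0$-semigroup furnished by Lemma~\ref{l.adjoint-semigroup}, and hence converges in $\calL(L^\infty(M,\mu))$ uniformly on bounded time intervals. Since $X$ is closed in $C(M)$ and the sup-norm on $C(M)$ coincides with the $L^\infty(M,\mu)$-norm, it suffices to prove $S_n(t) f \in X$ for every $n \in \N$, $t > 0$, and $f \in L^\infty(M,\mu)$. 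Condition~(ii) will then follow directly from the Duhamel identity of Proposition~\ref{p.boundedperturbation} applied to $f \in X$.

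The key preliminary observation is that for every $h \in L^\infty(M,\mu)$, the map $s \mapsto T(s) h$ is sup-norm continuous from $(0,\infty)$ into $X$. Indeed, for $0 < s_0 < s$ one has the identity $T(s) h - T(s_0) h = (T(s - s_0) - I) T(s_0) h$, and since $T(s_0) h \in X$, the strong Feller condition~(ii) of Definition~\ref{def.sf} applied to $T$ forces this difference to zero in sup-norm as $s \to s_0^+$; the case $s < s_0$ is symmetric. In particular, $T|_X$ is a $C_0$-semigroup on $X$, uniformly bounded on compact time intervals.

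I would then argue by induction on $n$ that $S_n(t)f \in X$ for $t > 0$ and that $t \mapsto S_n(t) f$ is sup-norm continuous on $(0, \infty)$. The base case is exactly the preliminary observation. For the inductive step, the integrand $\phi(s) \coloneqq T(t-s) B S_{n-1}(s) f$ takes values in $X$ for $s \in (0, t)$ by the preliminary observation applied to $h = B S_{n-1}(s) f$. Sup-norm continuity of $\phi$ on $(0, t)$ follows from the decomposition
\[
\phi(s) - \phi(s_0) = T(t-s) B \bigl[ S_{n-1}(s) f - S_{n-1}(s_0) f \bigr] + \bigl[ T(t-s) - T(t-s_0) \bigr] B S_{n-1}(s_0) f,
\]
using the inductive sup-norm continuity of $S_{n-1}$ for the first summand and the preliminary observation for the second. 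Combined with a uniform sup-norm bound on $\phi$, this yields a Bochner integral in $X$ that must coincide with the weak$^*$ integral defining $S_n(t) f$; hence $S_n(t) f \in X$. Sup-norm continuity in $t$ is then obtained by splitting $S_n(t) f - S_n(t_0) f$ into a short-interval tail and the piece $(T(t - t_0) - I) S_n(t_0) f$, which is small by strong Feller for $T$ applied to $S_n(t_0) f \in X$. Condition~(ii) of Definition~\ref{def.sf} for $S$ follows easily: for $f \in X$, Proposition~\ref{p.boundedperturbation} gives $S(t) f - T(t) f = \int_0^t T(t-s) B S(s) f \, ds \in X$, whose sup-norm is bounded by $C t \|B\| \|f\|_{L^\infty}$, while $T(t) f \to f$ in sup-norm by assumption~(ii) for $T$.

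The principal obstacle is that $B$ is not assumed to map $X$ into itself, so the strong Feller property of $T$ cannot be invoked on $B S_{n-1}(s) f$ directly. The preliminary observation sidesteps this by exploiting the semigroup law to shift the approximation $T(s - s_0) - I$ onto the element $T(s_0) h \in X$, which is precisely where strong Feller applies; the same trick underpins the sup-norm continuity of $\phi$ that makes the Bochner integral in $X$ well-defined.
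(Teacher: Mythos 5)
Your proof is correct, but it follows a genuinely different route from the paper's, and it is worth comparing the two. The paper works directly with the single weak$^*$-integral in the Duhamel formula, exploits the fact that the integrand is $C(M)$-valued for $0<s<t$ to define the integral \emph{pointwise} as $h(x)=\int_0^t[T(t-s)BS(s)f](x)\dd s$, verifies continuity of $h$ via dominated convergence, and then passes to $h\in X$ through a Hahn--Banach separation argument. That route sidesteps any need for strong (Bochner) measurability of $s\mapsto T(t-s)BS(s)f$ as an $X$-valued function and, in particular, does not require any a priori continuity of $s\mapsto S(s)f$. You instead build $S$ as the Dyson--Phillips series $\sum_n S_n$, establish by induction that each $S_n(t)f$ is the \emph{Bochner} integral of a bounded, sup-norm-continuous $X$-valued integrand (hence in $X$), and then use norm-convergence of the series together with closedness of $X$. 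Both arguments are sound; yours highlights the semigroup structure and the role of the restricted $C_0$-semigroup $T|_X$, but at the cost of the induction, whereas the paper's is shorter because it never needs the integrand to be strongly continuous. One small remark on your preliminary observation: the phrase ``the case $s<s_0$ is symmetric'' is not quite literal, since the naive identity $T(s_0)h-T(s)h=(T(s_0-s)-I)T(s)h$ applies $T(s_0-s)-I$ to an element $T(s)h$ that varies with $s$. The clean justification is exactly the observation you make immediately afterwards: $T|_X$ is a $C_0$-semigroup on $X$, so for $s_1\in(0,s_0)$ the map $s\mapsto T(s-s_1)\big[T(s_1)h\big]$ is continuous at $s_0$, and this gives two-sided continuity of $s\mapsto T(s)h$ on $(0,\infty)$.
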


\begin{proof}
    Let us first prove that $S(t)f \in C(M)$ for every $f\in L^\infty(M, \mu)$ and $t>0$. To that end, fix $t>0$ and note
    that $T(t-s)BS(s)f\in C(M)$ for $0<s<t$, since $T(t-s) L^\infty(M) \subset C(M)$. For fixed $x\in M$, let $g_n = \mu (B(x, n^{-1}))^{-1}\one_{B(x,n^{-1})}$. It follows that
    \[
    \langle g_n, T(t-s)BS(s)f\rangle \to [T(t-s)BS(s)f](x)\quad\mbox{ as } n\to \infty.
    \]
    This implies that for fixed $x$ the map $s\mapsto [T(t-s)BS(s)f](x)$ is measurable. We may thus define the function
    $h$ on $M$ by setting
    \begin{equation}
        \label{eq.pointwise}
        h(x) \coloneqq \int_0^t[T(t-s)BS(s)f](x)\dd s.
    \end{equation}
    
    Then $h\in C(M)$. Indeed, if $x_n \to x$, it follows that $[T(t-s)BS(s)f](x_n) \to [T(t-s)BS(s)f](x)$ as $n\to \infty$ and, as
    $\sup_{s\in [0,t]}\|T(t-s)BS(s)\|<\infty$, continuity of $h$ follows from the dominated convergence theorem. To see
    that actually $h\in X$, we first note that  integrating \eqref{eq.pointwise} with respect to a Borel measure $\nu$ yields
    \[
    \langle h, \nu\rangle = \int_0^t\langle T(t-s)BS(s)f, \nu\rangle \dd s.
    \]
    As $T$ is a strong Feller semigroup with respect to $X$, it follows that for every $s\in (0,t)$ the function $T(t-s)BS(s)f$
    is an element of $X$. If $h\not\in X$, the Hahn--Banach theorem implies that there exists a measure $\nu$ with 
    $\langle g,\nu\rangle=0$ for all $g\in X$ but $\langle h, \nu\rangle \neq 0$. This is a contradiction. At this point, it follows from the Duhamel formula \eqref{eq.duhamel} that $S(t)f\in X$.\smallskip

    Next note that $C\coloneqq \sup_{t\in (0,1]}\sup_{s\in [0,t]}\|T(t-s)BS(s)\| <\infty$. It follows that
    \[
    \Big\|\int_0^t T(t-s)BS(s)f\dd s\Big\| \leq \int_0^t C\|f\|\dd s\leq Ct\|f\| \to 0\quad \mbox{ as } t\to 0
    \]
    for every $f\in C(M)$. As $T(t)f \to f$ for such $f$, condition (ii) in the definition of strong Feller semigroup
    follows once again from \eqref{eq.duhamel}.
\end{proof}

\newcommand{\etalchar}[1]{$^{#1}$}

\end{document}